\documentclass[11pt]{article}
\usepackage{amssymb,amsmath}
\usepackage{mathrsfs}
\usepackage{latexsym}
\usepackage{color}
\usepackage{amssymb}
\usepackage{amsmath}
\usepackage{amsthm}
\usepackage{hyperref}
\usepackage{mathtools}
\usepackage{amsmath,amsfonts,amsthm,amssymb,amscd,color,xcolor,mathrsfs,enumitem}

\colorlet{darkblue}{blue!50!black}

\hypersetup{
    colorlinks,%
    citecolor=darkblue,%
    filecolor=red,%
    linkcolor=darkblue,%
    urlcolor=magenta,%
    pdfnewwindow=true,%
    pdfstartview={FitH}
}

\newcommand{\p}{\partial}
\newcommand{\e}{\varepsilon}

\newcommand{\R}{{\mathbb R}}
\newcommand{\Z}{{\mathbb Z}}
\newcommand{\IP}{{\mathbb P}}

\newcommand{\E}{{\mathbb E}}

\newcommand{\N}{{\mathbb N}}

\newcommand{\om}{\omega}
\newcommand{\eps}{\varepsilon}
\newcommand{\vp}{\varphi}
\newcommand{\vt}{\vartheta}

\newcommand{\MMM}{\mathfrak{M}}

\newcommand{\XXXX}{{\mathfrak X}}

\newcommand{\bKK}{{\boldsymbol{\KK}}}

\newcommand{\U}{{\boldsymbol{ {U}}}}

\newcommand{\eeta}{{\boldsymbol{{\eta}}}}

\newcommand{\nnu}{{\boldsymbol{{\nu}}}}

\newcommand{\oomega}{{\boldsymbol\omega}}

\newcommand{\xxi}{{\boldsymbol\xi}}

\newcommand{\AB}{\overline{B_G(1)}}

\newcommand{\BB}{{\cal B}}

\newcommand{\DD}{{\cal D}}

\newcommand{\FF}{{\cal F}}

\newcommand{\HH}{{\cal H}}
\newcommand{\KK}{{\cal K}}
\newcommand{\LL}{{\cal L}}

\newcommand{\PPP}{{\mathscr P}}
\newcommand{\PP}{{\mathcal P}}

\newcommand{\RR}{{\cal R}}

\newcommand{\dd}{{\textup d}}

\newcommand{\PPPP}{{\mathfrak P}}

\newcommand{\SSS}{{\mathscr S}}

\newcommand{\EEE}{{\boldsymbol{E}}}

\newcommand{\be}{\begin{equation}}
\newcommand{\ee}{\end{equation}}

\newcommand{\supp}{\mathop{\rm supp}\nolimits}

\newcommand{\Lip}{\mathop{\rm Lip}\nolimits}

 \newcommand{\strela}{\rightharpoonup}

\theoremstyle{plain}

\newtheorem{theorem}{Theorem}[section]

\newtheorem{lemma}[theorem]{Lemma}
\newtheorem{proposition}[theorem]{Proposition}
\newtheorem{corollary}[theorem]{Corollary}
\theoremstyle{definition}
\newtheorem{definition}[theorem]{Definition}

\theoremstyle{remark}
\newtheorem{remark}[theorem]{Remark}

\numberwithin{equation}{section}

\allowdisplaybreaks

\catcode`!=11
\let\!int\int \def\int{\displaystyle\!int}
\let\!lim\lim \def\lim{\displaystyle\!lim}
\let\!sum\sum \def\sum{\displaystyle\!sum}
\let\!sup\sup \def\sup{\displaystyle\!sup}
\let\!inf\inf \def\inf{\displaystyle\!inf}
\let\!cap\cap \def\cap{\displaystyle\!cap}
\let\!max\max \def\max{\displaystyle\!max}
\let\!min\min \def\min{\displaystyle\!min}
\catcode`!=12

\catcode`@=11
\@addtoreset{equation}{section}
\catcode`@=12

\begin{document}
\title{Long-time behaviour of  dynamical systems driven by bounded  mixing noises}
\author{Peng Gao\footnote{School of Mathematics and Statistics, Center for Mathematics and Interdisciplinary Sciences,
Northeast Normal University, Changchun, China. Email: gaopengjilindaxue@126.com},
~~Sergei Kuksin\footnote{Universit\'e Paris Cit\'e and Sorbonne Universit\'e, CNRS,
IMJ-PRG, Paris, France;  Steklov Mathematical Institute of RAS, Peoples' Friendship University of Russia \& National Research
University Higher School of Economics,  Moscow, Russia.
Email: sergei.kuksin@imj-prg.fr}}

\date{\today}
\maketitle

\begin{abstract}
We study the mixing properties of  discrete-time and continuous-time dissipative dynamical systems driven by  bounded mixing random forces. The continuous-time systems are reduced to discrete-time random dynamical systems generated by time-one maps, so that the main analysis is carried out in the discrete setting. We introduce a class of
mixing random forcings whose regular conditional distributions with respect to the past
 satisfy natural regularity, recurrence, and non-degeneracy assumptions, extending the framework previously developed for more restrictive classes of processes in a paper by Kuksin-Shirikyan in GAFA (2025).
 Under a linearised controllability assumptions on the system, we prove exponential mixing in the total variation metric for
  finite-dimensional phase spaces. We then establish an infinite-dimensional counterpart yielding exponential mixing in the
  dual-Lipschitz metric under suitable amendments of restrictions on the system and
 the random forcing.
  Our approach is based on lifting the dynamics to an appropriate Markov process on an infinite-dimensional history space and applying a Doeblin coupling argument through the method of Kantorovich functional. As applications,
  we derive exponential mixing for a broad class of ordinary  differential equations driven by mixing  random processes with bounded continuous trajectories. 
   As an application of our result to PDEs we discuss the randomly perturbed primitive
  equations of atmospheric dynamics.
  \end{abstract}
\smallskip
\noindent
{\bf AMS subject classifications:} 37A25, 37H30, 35Q30, 35Q56, 37L40, 35R60

\smallskip
\noindent
{\bf Keywords:} randomly forced equation, mixing,
mixing random force,  bounded random force, Markov lifting, Kantorovich functional, primitive equations, non-Markov dynamics, dual-Lipschitz distance, variational distance

\tableofcontents
 \setcounter{section}{-1}

\section{Introduction}

This work continues the study of discrete-time and continuos-time dynamical systems driven by bounded and mixing random forces,
initiated in \cite{KS-2025}.  Specially, let  $H$ and $E$ be finite- or infinite-dimensional Hilbert spaces, serving respectively as the phase space and the space of controls. We consider the systems
\be\label{a1}
u_k =S(u_{k-1}, \eta_k), \quad k\ge1; \qquad u_0=v, \; \; u_k \in H,
\ee
and
\be\label{a2}
\dot u(t) = F(u(t), \eta(t)), \quad t\ge0; \qquad u(0) =v, \;\; u(t) \in H.
\ee
Here $\{ \eta_k^\om \in E, k\in\Z\}$ and $\{ \eta^\om(t)  \in E, t\in\R\}$ are bounded random processes, the map $S: H\times E\to H$ is
$C^2$-smooth, and the non-autonomous vector-field $F$ on $H$ is such that its flow-maps are well defined and  $C^2$-smooth.

Equation~\eqref{a2} can be reduced to a system of the form \eqref{a1}. Indeed, for $k\in\Z$, let
$
I_k=[k-1,k),
$
define $\eta_k:=\eta \mid_{I_k}$, and regard $\eta_k$ as an element of a Hilbert space of curves $I_k\to E$, naturally identified with a Hilbert space $\hat E$ of curves $I:=I_1\to E$ (for example, $\hat E=L^2(I,E)$). Let
$
S_1\times\hat E\to H
$
be the map sending $(v_0,\eta_1)$ to $u(1)$, where $u(t)$, $t\in I$, is a solution of \eqref{a2}. Assuming that $S_1$ is well defined and continuous, and writing $u_k=u(k)$ for $k=0,1,\dots$, we obtain
\be\label{a3}
u_k =S_1(u_{k-1}, \eta_k), \quad k\ge1; \qquad u_0=v.
\ee
Thus, the discrete-time system \eqref{a3} describes the evolution of solutions to \eqref{a2} at integer times. Accordingly, the main part of the paper (Sections~1--5) is devoted to discrete-time systems of the form \eqref{a1}. However, the applications presented in Section~6 concern the more interesting continuous-time systems \eqref{a2}, studied through their reduction to systems
 \eqref{a3}.

 As in \cite{KS-2025, KS-2026}, our objective is to prove that a broad class of systems \eqref{a1}, driven by suitable random processes $\eta$, is mixing with respect to either the total variation or the dual-Lipschitz distance on the space of probability measures on $H$ (see \eqref{t_var} and \eqref{LipD} below). More precisely, for any two initial conditions $v$ and $v'$, the corresponding solutions ${u_k}$ and ${u_k'}$ satisfy
$
\operatorname{dist}\bigl(\DD(u_k),\DD(u_k')\bigr)\to0
$
in one of these two metrics. The speed of this convergence is referred to as the {\it rate of mixing}.

Mixing in random dynamical systems (including systems \eqref{a1} and \eqref{a2}) is a classical topic, motivated by numerous applications in science and engineering. Mixing for systems with Markovian 
dynamics which is strongly Feller or finite-dimensional has been intensively studied 
starting the works of Kolmogorov and Doeblin in the 1930s. Now this is a well developed topic; 
 see the books \cite{Kha, MT, Bor}. 
The case of infinite-dimensional Markov systems whose dynamics is not strongly Feller has attracted considerable attention since the beginning of this century and  also is relatively well understood; see \cite{KS-book}. These results apply to systems \eqref{a1} in which the $\eta_k$ are i.i.d. random variables, as well as to equations \eqref{a2} that are stochastic ODEs or PDEs.

In contrast, mixing for systems \eqref{a1} and \eqref{a2} with non-Markovian dynamics has been poorly understood, 
with only a few isolated results available for special classes of systems (see, e.g., \cite{EPR99, JP97, Hai05}). 
The situation changed with the work \cite{KS-2025}, which introduced an approach to systems \eqref{a1} and \eqref{a2} 
driven by bounded, stationary, and mixing random processes; such equations generally define non-Markovian dynamics. 
We  note similarities between the results and the approach of \cite{KS-2025} and those in the theory of SRB measures
 for Anosov diffeomorphisms; see \cite[Remark~1.1.8]{KS-2025}.

The goals of this work are twofold: to present a more accessible version of the approach developed in \cite{KS-2025}, and to extend its applicability to a broader class of systems \eqref{a1} and \eqref{a2}. Motivated by the first goal, we begin in Sections~3--4 by considering systems \eqref{a1} with finite-dimensional phase spaces~$H$, and then explain in Section~5 how the proofs can be adapted to the infinite-dimensional setting. In pursuit of the second goal, we do not assume that the process ${\eta_k}$ is stationary. Instead, we show that the approach of \cite{KS-2025} also applies to equations \eqref{a2} driven by bounded non-stationary processes $\eta(t)$ with continuous sample paths. (The examples considered in \cite{KS-2025} involve equations \eqref{a2} driven by processes whose sample paths belong to $L^2_{\mathrm{loc}}$.)

We always assume that the random variables ${\eta_k}$ are supported on a compact set $\KK \subset E$, that is,
$
\eta_k^\omega \in \KK
$
for all $k$ and $\omega$. We also assume that equation~\eqref{a1} is dissipative, in the sense that there exists a compact set $X\subset H$ such that
$
S(X\times \KK) \subset X.
$
Thus, \eqref{a1} defines a dynamical system on $X$. Without loss of generality, we assume that both $\KK$ and $X$ contain the origins of the corresponding spaces. In addition, we assume that the unforced system, corresponding to $\eta_k\equiv 0$, is strictly dissipative, in the sense that all its solutions converge to the origin at a rate depending only on $|v|_H$.
The above assumptions on system~\eqref{a1} and the process $\eta$ will be in force throughout the Introduction. In addition, we impose one further assumption on the system and three additional assumptions on the process. For the sake of readability, we state these assumptions in a slightly informal form in the Introduction.
\smallskip

\noindent {\bf (S)} There exists a finite-dimensional subspace $F\subset E$ such that, for every 
$(u,\eta)\in X\times\KK$, the linear operator
$
D_\eta S(u,\eta): F \to H
$
is surjective.
\smallskip

To state the assumptions on the process $\eta$, we first introduce some notation. For any integer $l$, let $\eeta_l$ denote the past of $\eta$ up to time $l$:
$
\eeta_l=(\dots,\eta_{l-1},\eta_l),
$
which we regard as an element of the space
\be\label{sequences}
\bKK := \KK^{\Z_-} = \{ (\dots, \xi_{-1}, \xi_0) \mid \xi_k \in \KK\} \subset E^{\Z_-}.
\ee
The space~$E^{\Z_-}$ is provided  with the Tikhonov topology, and we metrise the topology's  restriction to  $\boldsymbol{\KK}$ by the distance
\begin{equation}\label{distance-EE}
\dd(\xxi,\xxi')=\sum_{k=-\infty}^0 \alpha^{|k|}\|\xi_{k}-\xi_{k}'\|_E, \quad
\xxi=(\xi_k\in \KK,k\in\Z_-),\quad \xxi'=(\xi_k'\in \KK,k\in\Z_-),
\end{equation}
where $\alpha\in(0,1)$ is a fixed number. Spaces $E^{\Z_-}$ and $\bKK$ are Polish. 
Our assumptions concerning  the process $\eta$  are made in terms of its regular conditional
distributions with respect to the past,
$$
Q_l(\xxi;\cdot):=\IP\{\eta_{l+1}\in\cdot\,|\,\eeta_l=\xxi\}\in \PP(\KK)
$$
(e.g., see \cite[Section 10.2]{dudley2002}). Our first assumption on the process $\eta$ is analogous to the strong Feller property for Markov processes:
\smallskip

\noindent $\pmb{(\eta1)}$ For any integer $l$, the mapping
$$
Q_l:(\bKK,d)\to(\PP(\KK),|\cdot|_{\mathrm{var}}),\qquad
\xxi\mapsto Q_l(\xxi;\cdot),
$$
is Lipschitz continuous.
\smallskip

Assumption $\pmb{(\eta1)}$ implies that the process ${\eta_k}$ forgets its past exponentially fast. Indeed, by 
the definition \eqref{distance-EE} of the metric on $\bKK$, it follows that the regular 
conditional distribution of $\eta_{l+1}$ 
given $\eeta_l$, regarded as a function of a past variable $\eta_{l-k}$, is Lipschitz continuous with a Lipschitz constant 
of order $\alpha^k$. Moreover, as shown in Corollary~C, assumption $\pmb{(\eta1)}$, together with assumption 
$\pmb{(\eta2)}$ stated below, implies that the process ${\eta_k}$ is exponentially feeble mixing, i.e. mixing with a smaller set of
observables (see \cite[Section~4.2]{KS-2025}). Although somewhat different from the classical definition of mixing (e.g. as in \cite{IL}), it still implies 
SLLN and CLT for the corresponding processes, see \cite{DDL} and \cite[Section~4.1]{KS-book}. 

We also assume that the process satisfies the following recurrence-to-the-origin condition:
\smallskip

\noindent { $\pmb{(\eta2)}$} For any integer $l$, any $\delta>0$, any $n\in\N$, and any fixed past $\eeta_l=\xxi$, there exists a time, depending only on $\delta$ and $n$, such that then, with positive probability the process $\eta_k$ enters the $\delta$-neighbourhood of the origin in $E$ and remains there for the next $n$ time steps.
\smallskip

A precise formulation of this assumption is given below in condition (RZ). To state the final restriction, let
$F^\perp=\{y_F^\perp\}$ denote the orthogonal complement of the subspace $F=\{y_F\}$ appearing in
Assumption~({\bf S)}.
\smallskip

\noindent  ($\pmb{\eta3}$)  For each $\xxi \in \bKK$ and $l\in\Z$, the conditional distributions
$
Q_{l F} (\xxi, y_F^\perp; \cdot) \in \PP(F)
$
 of measure $Q_l(\xxi; \cdot)$ with respect to the projection $ E\to F^\perp$ admit densities with respect to the
  Lebesgue measure $l_F$ on $F$:
 $$
Q_{l F}(\xxi,y_F^\perp;\dd y_F)=\rho_{l F}(\xxi,y_F^\perp,y_F)\ell_F(\dd y_F),
$$
where functions $\rho_{l F}$ are uniformly Lipschitz functions of their arguments.\footnote{That is, their Lipschitz
constants are bounded by an $l$-independent number.}
\medskip

In Section \ref{A1} we show that assumptions $(\pmb{\eta1}) - (\pmb{\eta3})$ make a mild restriction, and plenty
 of processes $\{\eta_k\}$ meet them.
Our main result for the finite-dimensional system \eqref{a1} is the following theorem. Here and throughout the paper, we denote by $\{ u_k(v), k\ge0\}$, a solution of \eqref{a1}, corresponding to an  initial condition $u_0=v$.
\smallskip

\noindent {\bf Theorem A.} Suppose that $\dim H<\infty$ and assumptions {\bf(S)} and $\pmb{(\eta1)}$--$\pmb{(\eta3)}$ hold. Then there exist positive constants $\gamma$ and $C$ such that, for any $v,v'\in X$,
\begin{equation}\label{main1}
\| \DD u_k(v) - \DD u_k(v')\|_{var} \le C e^{-\gamma k}, \qquad \forall\, k\ge0.
\end{equation}
Moreover, there exists a process ${\hat u_k}, {k\in\Z}$ satisfying \eqref{a1} in the sense of distributions:
$
\DD(\hat u_k)=S_*\bigl(\DD(\hat u_{k-1},\hat\eta_k)\bigr)$ for all $k\in\Z,
$
where $\hat\eta$ is a process having the same distribution as $\eta$. This process is such that 
for any $v\in X$, estimate \eqref{main1} remains valid if $u_k(v')$ is replaced by $\hat u_k$. 
These properties determine the  distribution of  $\hat u$ uniquely.
\smallskip

If the process ${\eta_k}$ is stationary, then the process ${\hat u_k}$ is stationary as well. Consequently,
$
\DD(\hat u_k)\equiv\mu,
$
where $\mu$ is a stationary measure of equation \eqref{a1}. This result is established in \cite{KS-2025} under slightly
stronger assumptions on the system. Although these differences are minor, they are important for certain applications of the theorem.
Often, systems of the form \eqref{a1} that are dissipative in the above sense have the following additional property: for any $v\in H$ and any sequence ${\eta_k}\subset\KK$, the corresponding solution of \eqref{a1} enters the set $X$ after at most $k_0(|v|_H)$ steps and remains there thereafter. In this case, a slightly more general version of Theorem~A, stated and proved in the main text and allowing for random initial data $v$, implies that estimate \eqref{main1} holds for all $v,v'\in H$, with the constant $C$ replaced by
$
C\bigl(\max(|v|_H,|v'|_H)\bigr).
$
\medskip

For infinite-dimensional systems \eqref{a1}, we retain assumptions $\pmb{(\eta1)}$ and $\pmb{(\eta2)}$, while replacing the linearised controllability assumption \textbf{(S)} and the decomposability and regularity assumption $\pmb{(\eta3)}$ by suitable modifications. We begin with the following version of $\pmb{(\eta3)}$:
\smallskip

\noindent {$\pmb{(\eta3')}$} There exists an increasing sequence of finite-dimensional subspaces
$
F_n\varsubsetneq F_{n+1}\subset E
$
such that, for every $n$, assumption $\pmb{(\eta3)}$ holds with $F=F_n$.
\smallskip

To formulate an infinite-dimensional analogue of assumption (S), we recall the following definition,
standard in the theory of  parabolic PDEs.
\begin{definition}\label{d_determining}
A closed subspace $G\subset H$ is called {\it determining} for system \eqref{RDSN} if there exists a constant $\varkappa\in(0,1)$ such that
\begin{equation}\label{determining}
\bigl|(I-{\mathsf P}_G)D_uS(u,\eta)\bigr|_{\LL(H)}
\le \varkappa
\quad\text{for all } u\in X,\ \eta\in \KK,
\end{equation}
where ${\mathsf P}_G\to H$ denotes the orthogonal projection onto~$G$.
\end{definition}

This assumption usually holds if \eqref{a2} is a quasilinear parabolic equation, defined on a bounded space-domain.
See in the main text Remark~\ref{r_determ}.
This leads to the following infinite-dimensional version of assumption (S).
\smallskip

\noindent {\bf (S$'$)} There exists a finite-dimensional determining subspace $G\subset H$ such that, for every $(u,\eta)$ belonging to a neighbourhood of $X\times\KK$ in $H\times E$, the closure of the
image of the space $\bigcup_n F_n$ under the linear map $D_\eta S(u,\eta)$ contains~$G$.
\smallskip

In Section~5 we prove the following result.
\smallskip

\noindent {\bf Theorem B.} Under assumptions {\bf (S$'$)}, $\pmb{(\eta1)}$, $\pmb{(\eta2)}$, and $\pmb{(\eta3')}$, the conclusions of Theorem~A remain valid for systems \eqref{a1} with $\dim H=\infty$, provided that the total variation distance in \eqref{main1} is replaced by the dual-Lipschitz distance $|\cdot|_{L,H}^*$.
\smallskip

Note that, unlike in \cite{KS-2025}, we do not assume that the space $\bigcup_n F_n$ is dense in $E$.
As in \cite{KS-2025}, the proofs of both theorems are based on lifting system \eqref{a1} to a Markov system on the space
$\XXXX = X\times\bKK$. The latter is not strongly Feller, and $\XXXX$ is a complicated infinite-dimensional space. Nevertheless, an application of the Doeblin coupling, formulated in terms of the Kantorovich functional as in \cite{kuksin2006, KS-book}, enables us to prove that the lifted system is exponentially mixing, which in turn implies the assertions of the theorems.

In Section \ref{s_3.2} we make the following curious observation. Applying our result on the
exponential mixing for the lifted Markov system
on $\XXXX$ for the case of lifting of  the  trivial system \eqref{a1} with $H=\R^0={0}$ and $S(0,\eta)=0$,
we see that then  assumption $\pmb{(\eta3)}$ with $F={0}$ becomes vacuous,
whereas assumptions $\pmb{(\eta1)}$ and $\pmb{(\eta2)}$ reduce to conditions on the 
 transition probabilities $Q_l$. This yields the following corollary to Theorem~A, related to \cite[Case 2]{dobrushin-1970}:
\smallskip

\noindent {\bf Corollary C.} If a family of transition probabilities $\{Q_l\,\}_{l\in\Z}$ from $\bKK$ to $\KK$ satisfies assumptions $\pmb{(\eta1)}$ and $\pmb{(\eta2)}$, then there exists a random process ${\eta_l}$, unique in distribution, whose regular 
conditional distributions (see \eqref{cond}) are given by $Q_l$'s. Moreover, this process is exponentially mixing in the sense that for any integer $\tau$  and any vector $\xxi\in\bKK$,
$$
\|
\DD (\eta_{\tau+k} \mid \eeta_\tau =\xxi) - \DD( \eta_{\tau+k}) \|^*_L \le C e^{-\gamma k} \quad \forall k\ge0,
$$
where $\gamma, C>0$.

See Section \ref{s_3.2} for further properties of this process $\eta$.
In the final section, Section \ref{s_6}, we discuss applications of Theorems A and B to systems of the form \eqref{a2}. 
These applications concern systems driven by processes $\eta(t)$ whose trajectories belong either to space
 $L^2_{\mathrm{loc}}(\R,E)$ or to $H^1_{\mathrm{loc}}(\R,E)$ (and are therefore continuous). 
 The applications to systems driven by processes with $L^2_{\mathrm{loc}}(\R,E)$ trajectories are similar
  to those presented in \cite{KS-2025}, where in addition  the processes $\eta$
   are assumed to be stationary or statistically periodic.
   Accordingly, here we restrict our attention to systems driven by processes with $H^1_{\mathrm{loc}}(\R,E)$ trajectories.

 Firstly, in Section~\ref{s_6.2} we discuss random processes $\eta(t)$ for which the associated discrete-time process
 $\{\eta_k =\eta\!\mid_{I_k}, k\in\Z\}$ (see \eqref{a3})  ``almost satisfy" assumptions $\pmb{(\eta1)}-\pmb{(\eta3)}$.
 For that end  the following  simple construction is crucial. The space $H^1_0 =H^1_0(I)$, $I=[0,1]$, is
 a subspace of $H^1 =H^1(I)$ of codimension two.
 Let us take two affine functions on $I$, $\phi_1(t) = t$ and $\phi_2(t)=1-t$. Then
 $
 \phi_1(0) =\phi_2(1)=0$ and $\phi_1(1) =\phi_2(0)=1,
 $
 so $H^1 = \R \phi_1\oplus \R\phi_2\oplus H^1_0$ and each curve $\xi(t) \in \HH^1 =H^1(I,Y)$ may be uniquely decomposed as
 $$
 \xi(t) = \mu_1 \phi_1(t) +  \mu_2 \phi_2(t) + \xi^0(t), \quad\text{where}\quad  \mu_1= \xi(1), \; \mu_2 = \xi(0), \;\; \xi^0 \in \HH^0 = H^1_0(I, Y).
 $$
 Accordingly, if $\eta(t) \in Y$ is a process with $H^1_{loc}$-trajectories and $\nu_k= \eta(k)$, $k\in\Z$, then
 \be\label{H^1}
 \HH^1\ni \eta_k = \nu_{k-1} \phi_1 + \nu_k\phi_2 + \eta_k^0, \qquad \eta_k^0 \in \HH^1_0, \quad k\in\Z.
 \ee
 Except in degenerate situations, the process $\{\eta_k, k\in\Z\}$ does not satisfy assumption $\pmb{(\eta1)}$ with $E=\HH^1$. Indeed, the measures $Q_l(\xxi;\cdot)$ and $Q_l(\xxi';\cdot)$ are supported on the closed subsets of $\HH^1$ consisting of curves whose values at $t=0$ are $\xxi(0)$ and $\xxi'(0)$, respectively. These two subsets are disjoint whenever $\xxi(0)\neq\xxi'(0)$, and
 then
 $
 \| Q_l(\xxi;\cdot) -  Q_l(\xxi'; \cdot)\|_{var} =1.
 $
But in  assumptions $\pmb{(\eta3')}$ and ${\bf (S')}$ we do not require that the
 space $\cup F_n$ is dense in $E$, and  assumption $\pmb{(\eta3)}$ deals only with  one finite-dimensional  space $F$.
 Owing to this, as we show in Section~\ref{s_6.2}, both of our main theorems apply to equations driven by processes $\eta(t)$ with $H^1_{\mathrm{loc}}$ trajectories, provided that assumptions $\pmb{(\eta1)}$--$\pmb{(\eta3)}$ and $\pmb{(\eta3')}$ hold for the modified process
$
\hat\eta_k =(\nu_k, \eta_k^0) \in \R \times \HH_0^1.
$
This condition is satisfied by a large class of stochastic processes $\eta(t)$.

In Section \ref{s_6.3} we consider ODEs driven by random forces:
\begin{equation}\label{a4}
\dot x(t) = f(x(t), \eta(t)), \qquad x(t) \in H,\; \eta(t) \in E; \qquad \dim H, \;\dim E<\infty.
\end{equation}
To apply Theorem~A, we first pass from \eqref{a4} to the corresponding discrete-time equation \eqref{a3}. There
we  represent $\eta_k$ in the form \eqref{H^1}, define $w_k=(u_k, \nu_k)$, $\hat\eta_k = (\nu_k, \eta_k^0)$
and replace \eqref{a3} by the following equivalent
system:
\be\label{a5}
w_k = \hat S(w_{k-1}, \hat\eta_k), \qquad k\ge1, \quad w_0=( v, \eta_0^0),
\ee
where
$
\hat S \big((u,\mu), (\nu, \eta^0)\big) = \big( S(u, \mu\phi_1+\nu\phi_2+\eta^0), \nu\big).
$
We then show that Theorem~A applies to system \eqref{a5}, provided that:

--equation \eqref{a4}  is dissipative and satisfies a rather mild assumption {\bf (K)}
(which holds, for example, if the equation admits a suitable Lyapunov function and all trajectories corresponding to $\eta(t)\equiv0$ converge to the origin),

--  the linear mappings $
D_\eta f(x,\zeta) :E \to H
$
are surjective, and

-- the  assumptions $\pmb{(\eta1)}-\pmb{(\eta3)}$ hold for the process $\hat\eta$.

\noindent
Application of the theorem implies that eq.~\eqref{a4}, considered at integer moments of time,  is
exponentially mixing in the total variation distance, while being considered at  reals moments $t\ge0$ it is exponentially mixing in the
dual-Lipschitz distance.\footnote{We did not put much efforts in trying to prove that for $t\ge0$ the equation is mixing in the total
variation distance. Maybe for that assumption $\pmb{(\eta3)}$ should be replaced by a bit stronger condition.}

Thus, owing to the fact that   Theorem~A  allows for equations with continuous processes $\eta(t)$,   it
applies to a large class of randomly forced dissipative ODEs \eqref{a4}, driven by  bounded mixing random processes.
\smallskip

To illustrate applications of our results to PDEs with randomness, we have chosen the primitive equations of ocean and atmosphere
dynamics. These are equations for a vector-function $u=(u_1,u_2)$ of space-variables $(x,y,z)$:
\begin{equation}\label{a6}
\partial_t u-\Delta u+(u\cdot \nabla_2)u-(\int_{-h}^z {\rm{div}}_2 u(\cdot,\cdot,\xi)d\xi)\partial_z u+\nabla_2 p=\eta.
\end{equation}
The equations are supplemented with periodic boundary conditions, even in $z$, see in Section~\ref{s_6.4}. In \eqref{a6} $\eta(t)$ is a
random process in the space $E:= H^m$, $m\ge2$. Decomposing it as in \eqref{H^1}, we assume that the process
$\hat\eta_k=(\nu_k, \eta_k^0)$ satisfies assumptions $\pmb{(\eta1)}, \pmb{(\eta2)}, \pmb{(\eta3')}$. Then
  Theorem~B applies to a corresponding system
\eqref{a5}. As a result we get that eq.~\eqref{a6} is exponentially mixing in the dual-Lipschitz norm $\| \cdot\|^*_{L, H^m}$, $m\ge2$.
Again, it is crucial for verifying  the validity of  assumptions of Theorem~B  for eq.~\eqref{a6} that the 
theorem allows to work with  systems with continuous random forces.
\medskip

\noindent
{ \bf Acknowledgements.}
PG thanks Professor Sergei Kuksin for his invitation to Universit\'{e}
Paris Cit\'{e}, and for his scientific supervision on ergodicity and mixing for random dynamical systems.
PG thanks the financial support of the China Scholarship Council (No. 202406620219) and National Natural Science Foundation of China (Grant No. 12371188).
SK acknowledges the support from the Basic Research Program of HSE University.

\subsection*{Notation and Agreements}
We choose any complete probability space $(\Omega,\FF,\IP)$ isomorphic to interval $[0,1]$ with the Lebesgue $\sigma$-algebra and Lebesgue measure. For short we will call it ``probability space $\Omega$", and will
 systematically use it and its independent copies.
All Hilbert spaces are assumed to be separable.
We write $\Z$ ($\Z_+$, $\Z_-$) for the set of (non-negative, non-positive) integers,
denote by~$B_E(a,r)$ an open $r$-ball in a Hilbert space~$E$, centred at~$a$, and write~$B_E(r)$ if $a=0$.  When~$E$ is finite dimensional, we write~$\ell$ for the Lebesgue measure on~$E$. By~$\DD(\eta)$ we denote the law of a random variable~$\eta$, and $\strela$ signifies the weak convergence of measures.
If~$M$ is a Polish (i.e. complete metric and separable) 
 space, then  we write $\BB(M)$ for its Borel $\sigma$-algebra  and~$\PP(M)$ for the set of probability Borel
measures on~$M$. Borel-measurable maps from~$M$ to another measurable space are often called just~{\it measurable\/}, and a random field  $\{\zeta^\omega_x\in M_1, x\in M\}$, where $M_1$ is another Polish space, is said to be measurable if it define a measurable map $(\omega,x)\to\zeta_x^\omega$ from~$\Omega\times M$ to~$M_1$.
For a measurable map $f:M\rightarrow N$ and a measure $\mu\in \PP(M)$, we denote by
 $f_*(\mu)\in\PP(N)$ its image under the map $f$. Given a subset $Q\subset M$, we denote
 by $Q^c$ its complement, by $\bar Q$ its closure, and by $\bf 1_Q$ its indicator function.
  For Hilbert
  spaces $X$ and $Y$ (of finite or infinite dimension) $\LL(X,Y)$ stands for the Banach space of of bounded
  linear operators from $X$ to $Y$, given the operator norm,  and for an operator $A\in \LL(X,Y)$,
  $A^*$ stands for the adjoint operator.

\section{The setting}\label{s_1}
Let $H$  and $E$ be  Hilbert spaces of finite or infinite dimension, and $S$ be a map
 $$S:H\times E\to H,$$
which is  twice continuously differentiable and is  bounded on  bounded subsets of $H\times E$,
 together with its derivatives up to the second order.
For $\tau\in \Z$, consider the random dynamical system
\begin{equation}\label{RDSN}
	u_k=S(u_{k-1},\eta_k), \quad k\ge \tau+1,
\end{equation}
where $u_k\in H$, $\{\eta_k\}_{k\in\Z}$ is a random process that takes values in~$E$, and is defined on a complete probability space~$\Omega$ (see Notation and Agreements).
For $k\in \Z,$ we denote by $\pi_k\subset \FF$ the $\sigma$-algebra, generated by the random variables $\{\eta_l\}_{l\leq k}.$
We will write $\pi_k(\{\eta_l\})$ to indicate that this $\sigma$-algebra is defined in terms of the process $\{\eta_l\}.$
We assume that for each $k$, $\supp\DD(\eta_k)\subset\KK$, where $\KK$ is a compact subset of~$E$. For convenience of presentation, we assume that $\eta_k^\omega\in \KK$
for all $k$ and $\omega$, so $\{\eta_k\}$ may be regard as a process in $\KK$.
We also suppose that there is a compact subset $X\subset H$ such that $S(X\times\KK)\subset X$.
The sets $X$ and $\KK$  are assumed to
contain the origins of  spaces $H$ and $E$, respectively.
System \eqref{RDSN} is supplemented with an initial condition
\begin{equation}\label{ICN}
	u_\tau=u\in X,
\end{equation}
where $u\in X$ is a given point or, more generally, a $\pi_\tau-$measurable random variable.
Then~\eqref{RDSN} define a system $\{u_k(u)\}_{k\ge \tau}$ in~$X$, and a solution $u_s$ of~\eqref{RDSN} and \eqref{ICN} is $\pi_s-$measurable $\forall s\geq \tau$. We are interested in the long-time behaviour of the distributions of trajectories for this system. 

\begin{definition}\label{dws}
A process $\{\hat{u}_k\}_{k\geq \tau},$ defined on some probability space,
is called a {\it weak solution\/}\ of \eqref{RDSN}, \eqref{ICN}, if it satisfies \eqref{RDSN}, \eqref{ICN} with
 the process $\{\eta_k\}_{k\in \Z},$
replaced by some other process $\{\hat{\eta}_k\}_{k\in \Z}$, distributed as $\{\eta_k\}_{k\in \Z}$ and
defined on the same probability space as $\{\hat u_k\}$, while $u$ is replaced by
$\hat{u}$, measurable with respect to the $\sigma$-algebra $\pi_{\tau}(\{\hat{\eta}_{k}\})$, and
 such that $\DD(u,\eeta_{\tau})=\DD(\hat{u},\hat{\eeta}_{\tau}).$
\end{definition}

It is easy to see that all weak solutions $\{\hat{u}_k\}$ 
of \eqref{RDSN}, \eqref{ICN} have the same distribution. Our goal is to investigate the long-time behaviour of this distribution as $k\to\infty$.

Let $M$ be a Polish space with a metric $d_{M}$, $\mathcal{P}(M)$ be the space of probability measures on $(M,\mathcal{B}(M))$ and $C_{b}(M)$ be the space of bounded continuous functions $f: M\rightarrow \mathbb{R}$, endowed with the sup-norm $\|f\|_{\infty}.$
For any measures~$\mu,\nu\in \mathcal{P}(M)$, we define the total variation distance between them  as
\be\label{t_var}
\|\mu-\nu\|_{var}=\sup\limits_{\Gamma\subset \mathcal{B}(M)}|\mu(\Gamma)-\nu(\Gamma)|=\tfrac12\sup\limits_{f\in C_{b}(M),\|f\|_{\infty}\leq 1}|(f,\mu)-(f,\nu)|\leq 1.
\ee
We recall that if $M=\mathbb{R}^{n}$ with  Lebesgue measure $l_{n}(dx)$ and $\mu=\rho_{\mu}(x)l_{n}(dx),\nu=\rho_{\nu}(x)l_{n}(dx)$ for some $L^1-$functions $\rho_{\mu}$ and $\rho_{\nu}$, then
\begin{equation}\label{20}
\|\mu-\nu\|_{var}=\tfrac12\int_{\mathbb{R}^{n}}|\rho_{\mu}(x)-\rho_{\nu}(x)|l_{n}(dx).
\end{equation}

Denote by $L_{b}(M)$ the space of bounded Lipschitz functions on $M,$
that is, of functions $f\in C_{b}(M)$ for which
$Lip(f):=\sup_{u\neq v}\frac{|f(u)-f(v)|}{d_M(u,v)}<\infty.$
This space is endowed with the Banach norm
$\|f\|_{L}:=\|f\|_{\infty}+Lip(f).$
 The dual-Lipschitz distance between  measures~$\mu,\nu\in \mathcal{P}(M)$ is defined as
\begin{equation}\label{LipD}
\|\mu-\nu\|_{L,M}^*=\|\mu-\nu\|_L^*=\sup_{\|f\|_{L}\leq 1}(\langle f,\mu\rangle-\langle f,\nu\rangle)=\sup_{\|f\|_{L}\leq 1}|\langle f,\mu\rangle-\langle f,\nu\rangle|\leq 2.
\end{equation}
Obviously $\| \cdot\|_L^* \le \frac12 \|\cdot\|_{var}$. 
The Kantorovich distance between measures $\mu,\nu\in \mathcal{P}(M)$  is
\begin{equation}\label{Kant}
\|\mu-\nu\|_{K}=\sup_{ \text{Lip}\, (f) \le1} (\langle f,\mu\rangle-\langle f,\nu\rangle)
=\sup_{ \text{Lip}\, (f) \le1}|\langle f,\mu\rangle-\langle f,\nu\rangle|\leq \infty.
\end{equation}
Then $\| \cdot\|_L^* \le \| \cdot\|_K$, and it
 is well known (see \cite{villani2009}) and is easy to check that if the space $M$ is bounded, then the two last distances are
equivalent:
\begin{equation}\label{Lip_Kant}
C_M^{-1} \| \mu-\nu\|_K \le \|\mu-\nu\|_L^* \le  \| \mu-\nu\|_K.
\end{equation}
The celebrated Kantorovich--Rubinstein theorem (see \cite{villani2009}) suggests  an equivalent  definition for the Kantorovich
distance:
\begin{equation}\label{K_R}
\| \mu-\nu\|_K  = \inf_{\xi, \eta} \EEE d_M(\xi, \eta),
\end{equation}
where the infimum is taken over all random variables $\xi, \eta \in M$ such that $\DD\xi=\mu $ and $\DD\eta=\nu$. Moreover, the
infimum in the r.h.s. is attained.

\begin{definition}\label{dm}
System~\eqref{RDSN} is called {\it exponentially mixing in the total variation distance\/}\, if there are positive numbers~$C$ and~$\gamma$ such that, for any $\tau\in \Z$ and any $\pi_\tau-$measurable initial states $u,u^\prime\in X$, the corresponding weak solutions $\{u_k\}_{k\geq \tau},\{u_k^\prime\}_{k\geq \tau}$ of~\eqref{RDSN}, \eqref{ICN} satisfy the inequality
	\begin{equation}\label{ev}
		\|\DD(u_k)-\DD(u_k^\prime)\|_{\mathrm{var}}\le C\,e^{-\gamma (k-\tau)}, \quad k\ge \tau+1.
	\end{equation}
This system is called {\it exponentially mixing in the dual-Lipschitz distance\/} if relation \eqref{ev} holds with the distance $\|\cdot\|_{\mathrm{var}}$
replaced by $\|\cdot\|_L^*$.
\end{definition}

As in Introduction, for an integer $l$  we denote $\eeta_l=\{\eta_k,k\leq l\}$ and regard $\eeta_l$ as an element of
 $\bKK=\KK^{\Z_-} \subset E^{\Z_-} $,
where $\bKK$ is metrised by the distance \eqref{distance-EE}, and
 denote by $Q_l(\xxi;\cdot)$ the regular conditional distribution
\begin{equation}\label{cond}
Q_l(\xxi;\cdot):=\IP\{\eta_{l+1}\in\cdot\,|\,\eeta_l=\xxi\}\in \PP(\KK).
\end{equation}
Then
\begin{equation}\label{11}
\mathcal{D}(\eta_{l+1})(\cdot)=\int_{E}Q_l(\xxi;\cdot)\mathcal{D}(\eeta_{l})(d\xxi),
\end{equation}
and the measure $Q_l(\xxi;\cdot)$ is defined uniquely, up to a set of $\xxi$'s of zero $\mathcal{D}(\eeta_{l})$-measure, see \cite{dudley2002}.

From now on we assume that the process $\{\eta_k\}$ satisfies the
following continuity condition, similar to the strong Feller property for Markov processes.
\begin{itemize}
	\item [\hypertarget{SF}{\bf(SF)}] \sl For any $l\in \Z$, the regular conditional distribution $Q_l(\xxi;\cdot)$ 
	as in \eqref{cond} 
can be chosen to be a Lipschitz-continuous mapping from~$\bKK$ to the space~$(\PP(\KK),\|\cdot\|_{var})$. Moreover, there is $C>0$, such that
\begin{equation*}\label{LP-measures}
	\|Q_l(\xxi;\cdot)-Q_l(\xxi';\cdot)\|_{var}\le C\dd(\xxi,\xxi')\quad
	\mbox{for any $\xxi,\xxi'\in\bKK$~and $l\in \Z$}.
\end{equation*}
\end{itemize}
This condition allows us to uniquely define $Q_l(\xxi;\cdot)$ for $\xxi\in \bKK$~and $l\in \Z$, and to regard $Q_l(\xxi;\cdot)$ as a transition probability from $\bKK$ to $\KK$, Lipschitz continuous in $\xxi$ (see \cite[Section 5.1]{{parthasarathy2005}}).

Now we state a recurrence to the origin
 condition for the process $\{\eta_k\}$. For that, for $\xxi\in\bKK$ and $l,k, s \in \Z$ such that
$k>l$ and $s\ge0$, we consider the regular conditional distributions
$
\DD\{ (\eta_k, \dots, \eta_{k+s}) \mid \eeta_l = \xxi\}.
$
The latter may be expressed  via the transition probabilities $Q_r$ since for any $m\ge1$,
\[
\begin{split}
\mathcal{D}\big( {\eta}_{l+1}, 
\dots, {\eta}_{l+m} \mid \eeta_l=  \xxi)\big)
=Q_l(\xxi;\dd y_{1})Q_{l+1}(\xxi_{1};\dd y_{2})\cdots Q_{l+m}(\xxi_{m};\dd y_{m}),
\end{split}
\]
where $\xxi_{m}:=(\xxi,y_{1},\dots,y_{m})$.

\begin{description}\sl
	\item [\hypertarget{RZ}{(RZ) Recurrence to zero}.] \sl For any $n\in\N$   and $\delta>0$, there is an integer $s\ge0$ such that
	\begin{equation}\label{RZ}
\inf_{l\in\Z}	\inf_{\xxi\in\bKK}\mathbb{P} \big\{| {\eta}_k|<\delta,~k=l+s+1,\ldots,l+s+n \mid \eeta_l = \xxi
\big\}>0.
	\end{equation}
\end{description}

Jointly  assumptions {\hypertarget{RZ}{\bf(RZ)}}, {\hypertarget{SF}{\bf(SF)}} imply that the process $\{\eta_k\}$ is feebly
 exponentially mixing. See  Corollary~\ref{cor1}.

\section{The Markovian lifting}\label{s_2}
Let us introduce the product space $\XXXX=X\times\boldsymbol{\KK}$ endowed with the metric
\begin{equation}\label{metric-X}
\dd_\XXXX(U,U')=L\,\|v-v'\|_H+\dd(\xxi,\xxi'),
\end{equation}
where $U=(v,\xxi),U'=(v',\xxi')$ and $L\ge1$ is a parameter specified later. We denote the natural projections of $\XXXX$ to its
 two components by
\begin{equation}\label{nat-proj}
	\Pi_X:\XXXX\to X, \quad \Pi_\bKK:\XXXX\to\bKK.
\end{equation}
We consider a map $\SSS:\XXXX\times \KK\to\XXXX$ defined by the relation
\begin{equation}\label{S-stationary}
\SSS(U,\eta)=\bigl(S(v,\eta),(\xxi,\eta)\bigr), 	
\end{equation}
where $U=(v,\xxi)\in\XXXX$ and $\eta\in \KK$.

Let $\{u_k, k\ge1\}$, be a solutions of \eqref{RDSN}, \eqref{ICN}. Denoting $U_k= (u_k, \eeta_k)$, $k\ge\tau$, we
immediately see that the process $\{U_k\}$ satisfies
\be\label{Uk}
U_k = \SSS(U_{k-1}, \eta_k), \quad k\ge \tau+1, \;\; U_\tau =(v, \eeta_\tau).
\ee
Note that the initial data $U_\tau$  are of special form -- their second component always is $\eeta_\tau$.
Our next goal is to extend this dynamics to a Markov dynamics in $\XXXX$ which allows for all initial data at $k=\tau$.
Since we care only about  the laws of solutions which do not change if we replace the driving  process $\{\eta_k\}$ by
another one with the same distribution, we will firstly  construct a
 special probability space $(\widehat\Omega,\widehat\FF,\widehat\IP)$ and on it a
 processes ${}_\tau\eta_k$, distributed   as  $\{\eta_k\}$.   Then we will use that  space
 and the tools, employed  to construct the new process, to build a required Markov extension of dynamics \eqref{Uk}.

To define $\widehat\Omega$, we take countably many independent copies of the complete probability space
 $\Omega$ as in Notation and Agreements, and set
$$\widehat\Omega:=\Omega^{\Z}=\cdots\times\Omega\times\Omega\times\cdots\times\Omega\times
\cdots,~\widehat\Omega=\{\oomega:=(\omega_k,k\in\Z):\omega_k\in\Omega\}.$$
We provide $\widehat\Omega$ with the product $\sigma$-algebra and product probability $\widehat\IP$, and then 
define $\widehat\FF$ as a completion of that $\sigma$-algebra in $\widehat\IP$. For integers $k\ge \tau$ and 
$\widehat\omega\in\widehat\Omega$, let ${}_\tau\omega_k=(\omega_\tau,\cdots,\omega_k)$ 
(so ${}_\tau\omega_\tau=\omega_\tau$), and let $\oomega_k=\{\omega_l, l\le k\}$.
Let $\{\widehat\FF_k\}_{k\in \Z}$ be the natural filtration of $(\widehat\Omega,\widehat\FF,\IP)$ (completed with respect to
$\widehat\IP$). That is, $\widehat\FF_k$ consists of those elements of $\widehat\FF$ that depend
only on $\oomega_k$. In what follows we refer to the space $(\widehat\Omega,\widehat\FF,\IP,\widehat\FF_k)$
as a \textit{suitable (filtered) probability space}, constructed from $\Omega$.
Now for any $\tau\in \Z$, on the space $\widehat\Omega$ we will construct a process $\{{}_\tau\eta_k\}_{k\in \Z}$ as follows.

(i) For $k\le\tau$, we define the random variables~${}_\tau\eta_k$ as ${}_\tau\eta_k(\oomega)=\eta_k(\omega_{\tau})$,
where $\{\eta_k\}$ is the original process in~\eqref{RDSN}.

(ii) For $k\ge\tau+1$, let $\zeta_k=\zeta_k^{\eeta}(\omega)$ be a measurable random field on $\bKK$, valued in $\KK$,
i.e. a measurable mapping
$$\bKK\times \Omega\ni (\eeta,\omega)\mapsto\zeta_k^{\eeta}(\omega)\in\KK,$$
such that
\be\label{zeta_law}
\mathcal{D}(\zeta_k^{\eeta})=Q_{k-1}(\eeta;\cdot)\quad \forall \eeta.
\ee
We denote by $\zeta_k^{\eeta}(\oomega), \oomega\in \widehat\Omega$, the random field which depends on $\oomega\in \widehat\Omega$ only via $\omega_k$,
$$\zeta_k^{\eeta}(\oomega):=\zeta_k^{\eeta}(\omega_k)$$
(here, certainly, $\omega_k$ is the $k-$th component of $\oomega$).
Then, $\{\zeta_k\}_{k\geq \tau+1}$ are independent random fields on $\widehat\Omega$. We will use their naturally defined lifts
to $\XXXX$:
\begin{equation}\label{lift}
\zeta_k^U(\omega)=\zeta_k^{\Pi_{\bKK}U}(\omega),~U\in \XXXX.
\end{equation}

(iii) For $k\ge\tau+1$, we set ${}_\tau\eta_k$ to be a random variable on $\widehat\Omega$, depending on $\oomega_k,$ i.e.
${}_\tau\eta_k(\oomega)={}_\tau\eta_k(\oomega_k)={}_\tau\eta_k(\oomega_{k-1},\omega_k),$ defined as
$$
{}_\tau\eta_k(\oomega_{k-1},\omega_k)=\zeta_k^{{}_\tau\eeta_{k-1}(\oomega_{k-1})}(\omega_{k}),
$$
where $\zeta_k^{\eeta}$ is the random field defined in (ii). Then
$\mathcal{D}({}_\tau\eta_k|\widehat\FF_{k-1})=Q_{k-1}({}_\tau\eeta_{k-1};\cdot)$.
By induction in~$k\ge\tau$, using the definition of the fields $\zeta_k^{\eeta}$ jointly with the Fubini
theorem (and using (i) for the base of induction),
we get that for any finite sequence  of integers $\{t_1<t_2<\cdots<t_{N}\leq k\}$  it holds  that
$$\mathcal{D}({}_\tau\eta_{t_{1}},\cdots,{}_\tau\eta_{t_{N}})=\mathcal{D}(\eta_{t_{1}},\cdots,\eta_{t_{N}}).$$
Thus, the process $\{{}_\tau\eta_k\}$ on the space
 $\widehat\Omega$ is distributed as $\{\eta_k\}$, and for each $k\geq\tau$, ${}_\tau\eta_k$ is $\widehat\FF_k-$measurable. So the filtration $\{ \pi_k(_\tau\eta)\}$ is a sub-filtration of $\{\FF_k\}$.
\medskip

Now we introduce a system in~$\XXXX$ with the probability space $\widehat\Omega$ by the following relation:
\begin{equation}\label{RDSL}
	U_k=\SSS(U_{k-1},  \xi_k(\oomega_k)), \quad
	 \xi_k(\oomega_k)= \zeta_k^{  \xi_{k-1}(\oomega_{k-1})}(\om_k),\quad k\ge\tau+1,
\end{equation}
where $U_k=(v_k, \xxi_k)$, $k\ge \tau+1$, depends on $\oomega_k$.
System~\eqref{RDSL} is supplemented with an initial condition
\begin{equation}\label{ICL}
	U_\tau=V:=(v,\eeta),
\end{equation}
where $V \in \XXXX$ is a point, or more generally an $\widehat\FF_\tau-$measurable random variable, $V=V(\oomega_\tau)$.

\begin{remark}\label{imp}
We draw the reader's attention to the fact that a solution of  problem \eqref{RDSL}, \eqref{ICL} is defined in terms of the initial value $V$ and the  random fields $\zeta_k$
with $k\ge \tau+1.$ So only in terms of $V$ and the transitional probabilities $\{Q_l\}$. \qed
\end{remark}

We call a process $\{U_{k}\}_{k\geq\tau}$, defined by \eqref{RDSL}, \eqref{ICL} \textit{a solution}, and denote it as $\{U_{k}(V)\}_{k\geq\tau}$,
or as $\{{}_\tau U_{k}(V)\}_{k\geq\tau}$ to indicate that the initial data is prescribed at $k=\tau$.
We do not distinguish solutions of \eqref{RDSL}, \eqref{ICL}, constructed using different suitable probability
spaces $\widehat\Omega$ and different random fields  $\zeta_k$ on them since by given below
Proposition \ref{p-markov-chain}, they have the same distribution.

For any $k$ and any $U=(v,\xxi)\in \XXXX$, let us set
\begin{equation}\label{kp}
\PPP_k(U;\cdot):=Q_{k}(\xxi;\cdot)\in \PP(\KK).
\end{equation}
Then by \eqref{zeta_law} $\PPP_k(U;\cdot)=\mathcal{D}(\zeta_{k+1}^{\xxi})$.

\begin{lemma}\label{l_lift} If in  \eqref{ICL} $\eeta$ equals ${}_\tau\eeta_\tau = \eeta_\tau$, \
$v$ is $\pi_{\tau}-$measurable,\footnote{Note that
$\pi_{\tau}(\{\eta_k\})=\pi_{\tau}(\{\prescript{}{\tau}\eta_k\})$.}
 and
 $\{u_k\}_{ k\geq\tau}$ is a weak solution of \eqref{RDSN}, \eqref{ICN}, where $u=v$ and $\eta_k$ is the random
 process $\{\prescript{}{\tau}\eta_k\}$, 
  then process $\{U_k=(u_k,{}_\tau\eeta_{k})\}_{k\geq\tau}$ is a solution of \eqref{RDSL}, \eqref{ICL}.
\end{lemma}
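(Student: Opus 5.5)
This is essentially a verification by induction on $k\ge\tau$. We compare the two recursions term by term on the fixed suitable probability space $\widehat\Omega$, where all objects are defined. The claim to establish is that, for every $k\ge\tau$ and $\widehat\IP$-a.e.\ $\oomega$, the pair $U_k=(u_k,{}_\tau\eeta_k)$ coincides with the process generated by \eqref{RDSL}, \eqref{ICL} from the initial datum $V=(v,{}_\tau\eeta_\tau)$. In that recursion the driving variables $\xi_k(\oomega_k)$ should coincide with ${}_\tau\eta_k(\oomega_k)$.

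\textbf{Base case $k=\tau$.} By hypothesis $u_\tau=v$. The second component of $U_\tau$ is ${}_\tau\eeta_\tau$, which by construction (i) equals $\eeta_\tau(\omega_\tau)$. So $U_\tau=V$. Since $v$ is $\pi_\tau(\{{}_\tau\eta_k\})$-measurable and this $\sigma$-algebra is contained in $\widehat\FF_\tau$, $V$ is an $\widehat\FF_\tau$-measurable random variable, as \eqref{ICL} requires. We also identify the "initial past" of the driving variables in \eqref{RDSL}: for $l\le\tau$ we set $\xi_l={}_\tau\eta_l$, i.e.\ the past appearing in the field $\zeta_{\tau+1}$ is $\Pi_{\bKK}U_\tau={}_\tau\eeta_\tau$. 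This is how \eqref{RDSL} is to be read: the field superscript is the $\bKK$-component of the current state, cf.\ the lift \eqref{lift}.

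\textbf{Inductive step.} Assume $U_{k-1}=(u_{k-1},{}_\tau\eeta_{k-1})$ a.s. By construction (iii),
\[
{}_\tau\eta_k(\oomega_{k-1},\omega_k)=\zeta_k^{{}_\tau\eeta_{k-1}(\oomega_{k-1})}(\omega_k)=\zeta_k^{\Pi_{\bKK}U_{k-1}}(\omega_k),
\]
which is exactly the driving variable $\xi_k$ prescribed by \eqref{RDSL}. Since $\{u_k\}$ solves \eqref{RDSN} with the process ${}_\tau\eta_k$, the definition \eqref{S-stationary} of $\SSS$ gives
\[
\SSS\bigl(U_{k-1},{}_\tau\eta_k\bigr)=\bigl(S(u_{k-1},{}_\tau\eta_k),({}_\tau\eeta_{k-1},{}_\tau\eta_k)\bigr)=(u_k,{}_\tau\eeta_k)=U_k.
\]
This closes the induction.

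The only delicate point is the bookkeeping, not any estimate. We must check that the recursion \eqref{RDSL} really feeds the past component of $U_{k-1}$ into the field $\zeta_k$, so that its driving noise and the process ${}_\tau\eta_k$ of step (iii) are literally the same function of $\oomega_k$. We must also check that the weak solution $u_k$ may be realised on $\widehat\Omega$ with this particular noise, which is immediate because the problem is stated with $\eta_k={}_\tau\eta_k$. Measurability of $U_k$ with respect to $\widehat\FF_k$ then follows from that of ${}_\tau\eta_k$ and the continuity of $S$.
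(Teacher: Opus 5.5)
Your proof is correct and is the same argument as the paper's, which simply says the claim follows from the form of $\SSS$. Your induction makes that explicit: by construction (iii), the driving variable $\zeta_k^{\Pi_{\bKK}U_{k-1}}(\omega_k)$ in \eqref{RDSL} is literally ${}_\tau\eta_k$, and then \eqref{S-stationary} gives $\SSS(U_{k-1},{}_\tau\eta_k)=(u_k,{}_\tau\eeta_k)$.
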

\begin{proof} The assertion follows from the form of map $\SSS$.
\end{proof}

So system  \eqref{RDSL} is a required Markov extention of  \eqref{Uk}. It is a
lifting of system \eqref{RDSN} to the product space $\XXXX$.

\begin{proposition}\label{p-markov-chain}
For any $\tau\in \Z$, the family of  trajectories  $\{ U_k(V)\}_{k\geq\tau}$ of~\eqref{RDSL} with all possible deterministic initial conditions~$V\in\XXXX$, form an inhomogeneous Markov process in~$\XXXX$, corresponding to the filtration~$\{\widehat\FF_k\}_{k\ge\tau}$, with the transition probability from time $k$ to $k+1$
	\begin{equation}\label{TF}
P_k(U;\cdot)=\SSS_*\bigl(U,\PPP_k(U;\cdot)\bigr), \quad U\in\XXXX, \quad k\ge\tau.
	\end{equation}
\end{proposition}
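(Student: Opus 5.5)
The plan is to verify the Markov property directly from the structure of \eqref{RDSL}. I will use the fact that the driving fields $\zeta_{k+1}$ depend on $\oomega$ only through the single coordinate $\omega_{k+1}$, which is independent of $\widehat\FF_k$.

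\emph{Measurability and adaptedness.} Fix $\tau$ and a deterministic $V\in\XXXX$. By induction on $k\ge\tau$, $U_k(V)$ is a measurable function of $(V,\omega_{\tau+1},\dots,\omega_k)$, hence $\widehat\FF_k$-measurable. The induction step uses that $(\eeta,\omega)\mapsto\zeta_{k+1}^{\eeta}(\omega)$ is jointly measurable and that $\SSS$ is continuous. The latter holds because $S$ is $C^2$ and the concatenation $(\xxi,\eta)\mapsto(\xxi,\eta)$ is Lipschitz in the metric \eqref{distance-EE}. Joint measurability in $(V,\oomega)$ follows in the same way, so the family $\{U_k(V)\}$ is a measurable random field. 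This is needed to speak of a Markov process with all deterministic initial conditions.

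\emph{Identifying the one-step conditional law.} By \eqref{RDSL}, the $\KK$-component of $U_k$ is the last coordinate of $\Pi_\bKK U_k$. So the driving variable at step $k+1$ is $\zeta_{k+1}^{\Pi_\bKK U_k}(\omega_{k+1})$, that is, the lifted field \eqref{lift} evaluated at $U_k$. Hence
\[
U_{k+1}=\SSS\bigl(U_k,\zeta_{k+1}^{U_k}(\omega_{k+1})\bigr).
\]
Take a bounded measurable $f$ on $\XXXX$. The variable $U_k$ is $\widehat\FF_k$-measurable, while $\omega_{k+1}$ is independent of $\widehat\FF_k$ under the product measure $\widehat\IP$. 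The freezing lemma, i.e.\ Fubini on $\Omega^{\Z}$, therefore gives
\[
\E\bigl(f(U_{k+1})\mid\widehat\FF_k\bigr)=g(U_k),\qquad g(U)=\int_\Omega f\bigl(\SSS(U,\zeta^{U}_{k+1}(\omega))\bigr)\,\IP(d\omega).
\]
By \eqref{zeta_law} and \eqref{kp}, the law of $\zeta^{U}_{k+1}$ is $Q_k(\Pi_\bKK U;\cdot)=\PPP_k(U;\cdot)$. Therefore $g(U)=\int f\,dP_k(U;\cdot)$ with $P_k$ as in \eqref{TF}, where $\SSS_*(U,\cdot)$ denotes the image of $\PPP_k(U;\cdot)$ under $\eta\mapsto\SSS(U,\eta)$.

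\emph{Remaining checks.} I also need $P_k$ to be a genuine transition probability, i.e.\ $U\mapsto P_k(U;\Gamma)$ is measurable. This follows from (SF), which makes $Q_k$ continuous in $\xxi$ and hence a kernel, together with continuity of $\SSS$. Iterating the one-step identity through the tower property yields the Markov property with respect to $\{\widehat\FF_k\}$, and it yields the finite-dimensional distributions as iterated kernels. Since these depend only on $V$ and the $Q_l$, this also justifies Remark~\ref{imp}: the law does not depend on the choice of $\widehat\Omega$ or of the fields $\zeta_k$.

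The main obstacle is the measure-theoretic justification of the freezing step. One must make sure that substituting the random point $U_k$ into the field $\zeta_{k+1}^{\cdot}(\omega_{k+1})$ is legitimate. This requires joint measurability of the field and completion issues on $\widehat\Omega$; the algebra itself is straightforward.
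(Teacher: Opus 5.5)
Your proposal is correct and follows essentially the same route as the paper. The paper writes $\oomega_{k+1}=(\oomega_k,\omega_{k+1})$ and uses Fubini (your freezing step) to obtain $\E\{f(U_{k+1})\mid\widehat\FF_k\}=\E f\bigl(\SSS(x,\zeta_{k+1}^{x})\bigr)\big|_{x=U_k}$ as in \eqref{M1}, then identifies the right-hand side with $\int f\,dP_k(U_k;\cdot)$ via \eqref{zeta_law}, \eqref{kp} and \eqref{9}; your extra measurability and kernel checks only make explicit what the paper takes for granted.
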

Relation \eqref{TF} means that $P_k(U;\cdot)\in \PP(\XXXX)$ is an image of the measure $\PPP_k(U;\cdot)$ under the mapping $\KK\rightarrow\XXXX,$
sending $\eta$ to $\SSS(U, \eta)$. So for any bounded measurable function $f$ on $\XXXX$,
\begin{equation}\label{9}
\int f(V)P_k(U;dV)=\int f(\SSS(U, \eta))\PPP_k(U;d\eta).
\end{equation}

\begin{corollary}\label{c_uniq}
The law of a solution
$\{U_k(V)\}$ of \eqref{RDSL}, \eqref{ICL} does not depend on specific choices of the suitable probability space
 $\widehat\Omega$ and  random fields $\zeta_k$.
\end{corollary}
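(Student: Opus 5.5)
The plan is to deduce Corollary~\ref{c_uniq} from Proposition~\ref{p-markov-chain}: a Markov chain's law is fixed by its initial law and its transition probabilities. The transition probabilities \eqref{TF} depend only on $\SSS$ and on the kernels $\PPP_k(U;\cdot)=Q_k(\Pi_\bKK U;\cdot)$. They do not involve $\widehat\Omega$ or the fields $\zeta_k$; only the law \eqref{zeta_law} of each $\zeta_k^{\eeta}$ enters, and that law is prescribed.

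First I would treat a deterministic initial condition $V\in\XXXX$. For any $m\ge1$ and any bounded measurable $f$ on $\XXXX^{m+1}$, I would show by induction on $m$ that
$$
\widehat\E\, f(U_\tau,\dots,U_{\tau+m})=\int f(V,V_1,\dots,V_m)\,P_\tau(V;\dd V_1)P_{\tau+1}(V_1;\dd V_2)\cdots P_{\tau+m-1}(V_{m-1};\dd V_m).
$$
The inductive step conditions on $\widehat\FF_{\tau+m-1}$. The trajectory $U_\tau,\dots,U_{\tau+m-1}$ is $\widehat\FF_{\tau+m-1}$-measurable. The next state is $U_{\tau+m}=\SSS\bigl(U_{\tau+m-1},\zeta_{\tau+m}^{U_{\tau+m-1}}(\omega_{\tau+m})\bigr)$, where $\omega_{\tau+m}$ is independent of $\widehat\FF_{\tau+m-1}$. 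Fubini's theorem together with \eqref{zeta_law}, \eqref{lift} and \eqref{9} then gives the conditional law $P_{\tau+m-1}(U_{\tau+m-1};\cdot)$. This is exactly the Markov property from Proposition~\ref{p-markov-chain}.

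Next I would pass to a random $\widehat\FF_\tau$-measurable initial condition $V=V(\oomega_\tau)$. Since the fields $\zeta_k$ for $k\ge\tau+1$ depend only on $\omega_k$, they are independent of $V$. Conditioning on $\widehat\FF_\tau$ (again by Fubini) therefore yields the same formula integrated against $\DD(V)(\dd V)$.

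The finite-dimensional distributions thus depend only on $\DD(V)$, on $\SSS$ and on the $Q_k$. Kolmogorov's extension theorem (uniqueness on the product $\sigma$-algebra of $\XXXX^{\Z_{\ge\tau}}$) then shows that the law of the whole trajectory is independent of the choice of $\widehat\Omega$ and of the $\zeta_k$.

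The only delicate point is joint measurability. The map $(\oomega,\omega_k)\mapsto \zeta_k^{U_{k-1}(\oomega)}(\omega_k)$ must be jointly measurable so that Fubini applies to the composition. This follows because the random field $\zeta_k$ is measurable on $\bKK\times\Omega$ and $U_{k-1}$ is $\widehat\FF_{k-1}$-measurable. Beyond that, the argument is routine.
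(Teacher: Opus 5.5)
Your proposal is correct and follows the paper's route: the corollary is read off from Proposition~\ref{p-markov-chain}, because the transition probabilities \eqref{TF} involve only $\SSS$ and the kernels $Q_k$, so the finite-dimensional distributions, and hence the law of the trajectory, are fixed by $\DD(V)$. Your explicit induction, your treatment of $\widehat\FF_\tau$-measurable $V$ via the independence of $\omega_k$, $k\ge\tau+1$, from $\oomega_\tau$, and the final uniqueness step (which is really the $\pi$--$\lambda$ uniqueness theorem on cylinder sets rather than Kolmogorov's existence theorem) spell out details that the paper leaves implicit.
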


\begin{proof}[Proof of Proposition~\ref{p-markov-chain}]
First, by construction, the random variable~$U_k(V)$, $k\ge\tau$, depends only on~$\oomega_k$, so it is $\widehat\FF_k$-measurable.
Then, for any integers $k\ge\tau$, any bounded measurable function $f:\XXXX\to\R$,
and any $\widehat\FF_k$-measurable function $\varphi(\oomega_k)$, in view of~\eqref{RDSL} with $k:=k+1$, we have
\begin{align*}
\E\bigl(f(U_{k+1}(V))\varphi(\oomega_k)\bigr)
&=\E^{\oomega_k}\Bigl[\E^{\omega_{k+1}} f\bigl(\SSS(U_{k}^{\oomega_k},\zeta_{k+1}^{{}_\tau\eeta_k(\oomega_k)}(\omega_{k+1})\bigl)\bigr)\varphi(\oomega_k)\Bigr]\\
&=\E^{\oomega_k}\Bigl[\E^{\omega_{k+1}} f\bigl(\SSS(U_{k}^{\oomega_k},\zeta_{k+1}^{U_k^{\oomega_k}}(\omega_{k+1})\bigl)\bigr)\varphi(\oomega_k)\Bigr]\\
&=\E^{\oomega_k}\Bigl[\varphi(\oomega_k)\,\E f\bigl(\SSS(U_{k}^{\oomega_k},\zeta_{k+1}^{U_k^{\oomega_k}}\bigl)\bigr)\Bigr]
\end{align*}
(we recall \eqref{lift}).  Thus,
\begin{equation}\label{M1}
\E\{f(U_{k+1}(V))\,|\,\widehat\FF_k\}=\E f\bigl(\SSS(x,\zeta_{k+1}^{ x})\bigr)\bigr|_{x=U_{k}}.
\end{equation}
It remains to note that the map $\XXXX \ni x\mapsto \E f(\SSS(x,\zeta_{k+1}^{ x}))$  defines a bounded measurable function~$\hat f_{\tau,k}(x)$, depending only on~$f,k$ and $\tau$. Thus, it holds that
\begin{equation*}
	\E\{f(U_{k+1}(V))\,|\,\widehat\FF_k\}=\hat f_{\tau,k}(U_{k}(V)) \quad \mbox{$\IP$-almost surely},
\end{equation*}
so $\{ U_k(V)\}$ is a Markov process. Relation~\eqref{TF} with $k:=k+1$ follows obviously from \eqref{M1} and \eqref{9}.
\end{proof}

The Markov process in Proposition \ref{p-markov-chain} defines Markov operators ${}_\tau\PPPP_k^*:\PP(\XXXX)\to \PP(\XXXX), k\geq \tau,$ by relations
\be\label{flow}
\prescript{}{\tau}\PPPP_{\tau}^*=id, \quad \prescript{}{t}\PPPP_{u}^*\circ\prescript{}{s}\PPPP_{t}^*=\prescript{}{s}\PPPP_{u}^*~~\forall s\leq t\leq u,
\ee
\[
\prescript{}{\tau}\PPPP_{\tau+1}^*\mu=\int P_\tau (U;\cdot)\mu(dU),~\forall \tau\in \Z.
\]
Alternatively, for any $k\ge \tau,$ $\prescript{}{\tau}\PPPP_{k}^*\mu=\DD ({}_\tau U_k(V))$, where $\DD(V)=\mu.$

\begin{lemma}\label{l_Lip}
1) The mappings $\XXXX \rightarrow (\PP(\XXXX),\|\cdot\|_L^*),~U \mapsto P_k(U)$ are uniformly  Lipschitz-continuous.
\par
2) Same about operators $\prescript{}{k}\PPPP_{k+1}^*: \PP(\XXXX)\rightarrow \PP(\XXXX)$.
\par
3) For any $k\in \Z$ and $ s\ge 0,$ the operator $\PP(\XXXX)\rightarrow \PP(\XXXX\times\cdots\times\XXXX),~\PP(\XXXX)\ni \DD(V) \mapsto \DD({}_kU_{k}(V),\ldots,{}_kU_{k+s}(V))\in \PP(\XXXX\times\cdots\times\XXXX)$ is Lipschitz-continuous, with respect to the dual-Lipschitz norms.
 Its Lipschitz constant is bounded by a $k-$independent number.
\end{lemma}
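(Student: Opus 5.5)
Throughout I write $C_S$ for the sup-norm of $D_vS,D_\eta S$ on a convex neighbourhood of $X\times\KK$, and $C_\KK$ for the diameter of $\KK$; both are finite because $X$ and $\KK$ are compact and $S$ is $C^2$ with bounded derivatives on bounded sets.

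The plan is to test $P_k(U)$ and $P_k(U')$ against a function $f$ with $\|f\|_L\le1$ on $\XXXX$ and use \eqref{9}. This gives
\[
\langle f,P_k(U)\rangle-\langle f,P_k(U')\rangle
=\int f(\SSS(U,\eta))\bigl(\PPP_k(U;d\eta)-\PPP_k(U';d\eta)\bigr)
+\int \bigl(f(\SSS(U,\eta))-f(\SSS(U',\eta))\bigr)\PPP_k(U';d\eta).
\]
The first term is at most $2\|Q_k(\xxi)-Q_k(\xxi')\|_{var}\le 2C\,\dd(\xxi,\xxi')$ by (SF), since $|f|\le1$.

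For the second term I use $\mathrm{Lip}(f)\le1$, so it suffices to show that $\SSS(\cdot,\eta)$ is Lipschitz on $\XXXX$ uniformly in $\eta\in\KK$. Write $U=(v,\xxi)$ and $U'=(v',\xxi')$. Then
\[
\dd_\XXXX(\SSS(U,\eta),\SSS(U',\eta))=L\|S(v,\eta)-S(v',\eta)\|_H+\dd((\xxi,\eta),(\xxi',\eta)).
\]
The first piece is bounded by $LC_S\|v-v'\|$. For the second, appending $\eta$ shifts the indices by one, so the last coordinates coincide and every older weight gains a factor $\alpha$; hence it equals $\alpha\,\dd(\xxi,\xxi')$. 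Altogether $\SSS(\cdot,\eta)$ has Lipschitz constant at most $\max(C_S,\alpha)$, so $\|P_k(U)-P_k(U')\|_L^*\le C_1\dd_\XXXX(U,U')$ with $C_1$ independent of $k$. This proves 1).

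For 2), the Lipschitz property is understood with respect to $\|\cdot\|_L^*$ on both sides. By part 1), for $\|f\|_L\le1$ the function $g(U)=\langle f,P_k(U)\rangle$ satisfies $|g|\le1$ and $\mathrm{Lip}(g)\le C_1$. Since ${}_k\PPPP_{k+1}^*\mu=\int P_k(U;\cdot)\,\mu(dU)$, Fubini gives $\langle f,{}_k\PPPP_{k+1}^*\mu\rangle=\langle g,\mu\rangle$. Hence
\[
|\langle f,{}_k\PPPP^*_{k+1}\mu-{}_k\PPPP^*_{k+1}\nu\rangle|\le(1+C_1)\|\mu-\nu\|_L^*,
\]
which is 2).

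For 3) I argue by induction on $s$, putting on the product space the sum metric. Let $\Lambda_s(\mu)=\DD({}_kU_k,\dots,{}_kU_{k+s})$. By the Markov property, $\Lambda_{s+1}(\mu)$ is obtained from $\Lambda_s(\mu)$ by the kernel $(U_0,\dots,U_s)\mapsto\delta_{(U_0,\dots,U_s)}\otimes P_{k+s}(U_s)$. For $F$ on $\XXXX^{s+2}$ with $\|F\|_L\le1$, define
\[
G(U_0,\dots,U_s)=\int F(U_0,\dots,U_s,W)\,P_{k+s}(U_s;dW).
\]
Then $|G|\le1$. Varying $U_0,\dots,U_{s-1}$ changes $G$ by at most the corresponding distance, because $\mathrm{Lip}(F)\le1$. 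Varying $U_s$ is handled by splitting as in part 1), which gives a change of at most $(1+C_1)\dd_\XXXX(U_s,U_s')$. So $\|G\|_L\le 2+C_1$, and therefore $\mathrm{Lip}(\Lambda_{s+1})\le(2+C_1)\mathrm{Lip}(\Lambda_s)$. Since $\Lambda_0=\mathrm{id}$, this yields the constant $(2+C_1)^s$, which does not depend on $k$.

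The only delicate point is the justification of $C_S$: the map $v\mapsto S(v,\eta)$ must be Lipschitz on $X$, uniformly in $\eta\in\KK$. I would get this from the mean value theorem on the convex hull of $X\times\KK$, which is bounded, so the bound on $DS$ applies there. Everything else is routine bookkeeping of constants.
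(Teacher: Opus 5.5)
Your proof is correct. Part 1) is the paper's argument: a change-of-measure term bounded via (SF), plus a change-of-base-point term bounded via the Lipschitz property of $\SSS(\cdot,\eta)$. You verify that Lipschitz property explicitly, including $\dd((\xxi,\eta),(\xxi',\eta))=\alpha\,\dd(\xxi,\xxi')$, whereas the paper leaves it implicit.

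For 2) and 3) you take a dual route where the paper takes a coupling route. The paper realises the two initial laws by random variables; in 3) it does the same for the laws of the first $s$ steps. It integrates the pointwise bound of 1) to get $C\,\E\,\dd_\XXXX(V_1,V_2)$, and then picks an optimal coupling. That step uses the Kantorovich--Rubinstein theorem \eqref{K_R} and the comparison \eqref{Lip_Kant} of the Kantorovich and dual-Lipschitz distances on the bounded space $\XXXX$.

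You observe instead that 1) says exactly this: whenever $\|f\|_L\le 1$, the function $U\mapsto\langle f,P_k(U)\rangle$ lies in $L_b(\XXXX)$ with norm at most $1+C_1$. In 3), integrating the last variable against $P_{k+s}(U_s;\cdot)$ maps the unit ball of $L_b(\XXXX^{s+2})$ into the ball of radius $2+C_1$ in $L_b(\XXXX^{s+1})$. Testing against a single function then gives both estimates.

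Your version is shorter and uses neither optimal couplings nor \eqref{Lip_Kant}. It also yields explicit $k$-independent constants $1+C_1$ and $(2+C_1)^s$. The paper's version has the advantage of staying in the coupling language used throughout Section~\ref{s_4}.

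Your base case $\Lambda_0=\mathrm{id}$ is the right one. The paper's remark that the case $s=0$ is ``proved in 2)'' is a harmless index slip.
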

\begin{proof}[Proof of Lemma~\ref{l_Lip}]
1) By construction, for $V\in \XXXX$,
\begin{align*}
P_k(V;\cdot)=\DD ({}_k U_{k+1}(V))=\DD (\SSS(V,\zeta_{k+1}^V)).
\end{align*}
 So for any $f$ as in \eqref{LipD},
\begin{align*}
\langle f,P_k(V_1)\rangle-\langle f,P_k(V_2)\rangle&=\E \big(f(\SSS(V_1,\zeta_{k+1}^{V_{1}}))-f(\SSS(V_2,\zeta_{k+1}^{V_{2}}))\big)\\
&\leq Cd_{\XXXX}(V_1,V_2)+\E \big(f(\SSS(V_2,\zeta_{k+1}^{V_{1}}))-f(\SSS(V_2,\zeta_{k+1}^{V_{2}}))\big)\\
&\leq C_1d_{\XXXX}(V_1,V_2),
\end{align*}
where we used Hypothesis~{\rm \hyperlink{SF}{(SF)}}.

2) Let $f$ be as above, $\mu_1,\mu_2\in \PP(\XXXX)$ and $V_1,V_2$ be two random variables in $\XXXX$, depending on $\omega_k$ and distributed as $\mu_1$ and $\mu_2$, accordingly.
Then, by 1) with $V_j:=V_j^{\omega_k}$,
\begin{align*}
&\langle f,\prescript{}{k}\PPPP_{k+1}^*\mu_1\rangle-\langle f,\prescript{}{k}\PPPP_{k+1}^*\mu_2\rangle\\
=&\int\Big[\int\big(f(\SSS(V_1^{\omega_k},\zeta_{k+1}^{V_1^{\omega_k}}(\omega_{k+1})))-f(\SSS(V_2^{\omega_k},\zeta_{k+1}^{V_2^{\omega_k}}(\omega_{k+1})))\big)\IP(d\omega_{k+1})\Big]\IP(d\omega_k)
\\\leq& C\int d_{\XXXX}(V_1^{\omega_k},V_2^{\omega_k})\IP(d\omega_k).
\end{align*}
By \eqref{Lip_Kant} and \eqref{K_R}, the random variables $V_1$ and $V_2$
  may be chosen in such a way that the integral in the r.h.s.
   is no bigger than $C^{'}\|\mu_1-\mu_2\|_{L}^{*}$. This implies assertion 2).

3) We argue by induction in $s$.
For $s=0$, the assertion  is proved in 2). For $s\ge 1$, we write $({}_kU_{k}(V),\ldots,{}_kU_{k+s}(V))=(\U,U_{k+s})$,
 where $\U=\U(\hat\oomega)$,
$\hat\oomega=\oomega_{k+s-1}, U_{k+s}=U_{k+s}(\hat\oomega,\omega_{k+s})$, and similarly write $({}_kU_{k}(V'),\ldots,{}_kU_{k+s}(V'))$
as $(\U',U_{k+s}')$. Then for any function $F(\U,U_{k+s})$ on $\XXXX^s\times \XXXX$ as in \eqref{LipD}, we have
\begin{equation}
\begin{split}\label{xy}
&\E F(\U,U_{k+s})-\E F(\U'_k,U'_{k+s})\\
=&\int\Big[\int\big( F(\U,U_{k+s})-F(\U',U_{k+s})\big)+\big(F(\U',U_{k+s})-F(\U',U'_{k+s})\big)\IP(d\omega_{k+s})\Big]\IP(d\hat\oomega).
\end{split}
\end{equation}
The inner integral is a sum of two. For any $\hat\oomega$ fixed, the first one obviously is bounded by 
 $C\E d_{\XXXX\times\ldots\times\XXXX}(\U(\hat\oomega),\U'(\hat\oomega))=:Z$. The second one may be estimated by 
 1) with $k:=k+s$ and $U=U_{k+s-1}$, and also  is bounded by $Z$ (with another constant $C$). 
So the r.h.s. of \eqref{xy} is bounded by $\E Z$. The l.h.s. of \eqref{xy} depends only on the laws of
$(\U,U_{k+s})$ and $(\U',U'_{k+s})$. By the Markovness, these two measures depend only on $\DD(\U)$ and $\DD(\U')$, so we may replace $\U$ and $\U'$ by any random
vectors with the same distributions, without changing \eqref{xy}. Using again \eqref{Kant},  \eqref{Lip_Kant},
we find modifications of $\U$ and $\U'$ such that
$$\E Z\le C\|\DD(\U)-\DD(\U')\|_{L,\XXXX\times\ldots\times\XXXX}^{*}.$$
So 3) follows by induction.
\end{proof}

\section{Main result: finite-dimensional case}\label{s_3}
In this section and in the next one 
we state and prove our main results for the case of finite-dimensional phase space $H$.
 So, till the end of  Section~\ref{s_4}, 
$$\textmd{dim}~H<\infty.$$

\subsection{Main result for system \eqref{RDSN}, and mixing in the lifted system \eqref{RDSL}}

For any finite-dimensional subspace $F\subset E$, $E=F\oplus F^\perp,$ where $F^\perp$ is the orthogonal complement of $F$. Accordingly we write
$E\ni y=(y_F,y_F^\perp)$
and consider the associated projection operators $\mathsf P_F:E\to F, y=(y_F,y_F^\perp)\mapsto y_F$ and $\mathsf \mathsf P_{F}^{\perp}:E\to F^\perp, (y_F,y_F^\perp)\mapsto y_F^\perp$. Then
for any $\xxi\in\bKK$ and $l\in \Z$,
the measure $Q_l(\xxi;\cdot)$ admits a disintegration with respect to $\mathsf P_F$:
\begin{equation}\label{24}
	Q_l(\xxi;\dd y)=Q_{l F}^\perp(\xxi;\dd y_F^\perp)Q_{l F}(\xxi,y_F^\perp;\dd y_F),
\end{equation}
where $Q_{l F}^{\bot}(\xxi;\cdot)\in \PP(F^\bot)$ is the projection of $Q_l(\xxi;\cdot)$ to $F^\bot$,
and $Q_{l F}(\xxi,y_F^\perp;\cdot)\in \PP(F)$. See~\cite[Theorem~10.2.1]{dudley2002}.

Let us introduce the following assumption on the operator $S$ and process $\{\eta_k\}$.
\begin{description}\sl
	\item [\hypertarget{LCR}{(LCR) Linearised controllability and regularity}.]
~\par
(\textit{a}) There exist a finite-dimensional subspace $F\subset E$ and an open neighbourhood
 $O$ of  $X\times\KK$ in $H\times E$ such that for any $(u,\eta)\in O$,
the operator $D_\eta S(u,\eta): F\to H$ is surjective.

(\textit{b}) For any $l\in \Z$ and each $\xxi\in\bKK$, there is a Lipschitz continuous function $\rho_{l F}:\bKK\times E\to\R_+$ supported by~$\bKK\times\KK$ such that in decomposition \eqref{24}, it holds that
\begin{equation}\label{lcr}
Q_{l F}(\xxi,y_F^\perp;\dd y_F)=\rho_{l F}(\xxi,y_F^\perp,y_F)\ell_F(\dd y_F),
\end{equation}
where $\ell_F$ stands for the Lebesgue measure on~$F$, and  functions $\rho_{l F}$, $l\in \Z$,  are  uniformly Lipschitz.
\end{description}

\begin{remark}
1) Since the set $X\times\KK$ is compact and the map $S$ is $C^2-$smooth, the linearised controllability holds if for each $(u,\eta)\in X\times\KK$, the mapping $D_\eta S(u,\eta): F\to H$ is surjective.

\noindent
2) Moreover, if $\{e_1,e_2,\cdots\}$ is a Hilbert basis of $E$, then the space $F$ above 
may be chosen of the form $span\{e_1,\cdots, e_N\}$,
for some $N<\infty$. \qed
\end{remark}

\begin{remark} \label{r_3.2}
 If $E$ is a finite-dimensional space, let us consider the following hypothesis:
\begin{description}\sl
\item [\hypertarget{F}{{}}]
$\,\; \;\;\;$  For any $(u,\eta)\in O$,
the operator $D_\eta S(u,\eta):E\to H$ is surjective. Moreover, for any $l\in \Z$ and each $\xxi\in\bKK$, there is a Lipschitz continuous function $\rho_{l}:\bKK\times E\to\R_+$ supported by~$\bKK\times\KK$ such that
\begin{equation}\label{finiteQ}
Q_{l}(\xxi;\dd y)=\rho_{l}(\xxi,y)\ell_E,
\end{equation}
where functions $ \rho_l$ are uniformly Lipschitz.
\end{description}
It provides a simple sufficient condition for the validity of Hypotheses~{\rm \hyperlink{SF}{(SF)}} and \hyperlink{LCR}{(LCR)}(b) with $F=E$. \qed
\end{remark}

Finally, we assume that the free system \eqref{RDSN}$|_{\eta_k\equiv0}$ is dissipative. To state this assumption, for 
 an integer $k\ge1$, we denote $\mathbf{0}_k=(0,\dots,0)$, where $0\in E$ is repeated~$k$ times, and for a vector $\vec{\xi}_n=(\xi_1,\dots,\xi_n)\in\KK^n$, we set
\begin{equation}\label{notat}
\mbox{$S_n(v;\vec{\xi}_n)=u_n$, where $\{u_1,\dots,u_n\}$ is a trajectory of~\eqref{RDSN} with $\eta_k=\xi_k$}.
\end{equation}

\begin{description}\sl
	\item [\hypertarget{GD}{(GD) Global dissipation}.] There is an integer $k\ge1$ and a number $a\in(0,1)$ such that
\begin{equation*}
	\|S_k(u;\mathbf{0}_k)\|_H\le a\,\|u\|_H
	\quad\mbox{for any $u\in H$}.
\end{equation*}
\end{description}

The main result of this paper is the following assertion:
\begin{theorem}\label{mixing}
Suppose that Hypotheses~{\rm \hyperlink{SF}{(SF)}, \hyperlink{RZ}{(RZ)}, \hyperlink{LCR}{(LCR)}} and~\hyperlink{GD}{\rm(GD)} are fulfilled. 
Then system  \eqref{RDSN} is exponentially mixing in the total variation distance. Moreover, there exists a process $\{\hat{u}_k\}_{k\in \Z}$ which satisfies \eqref{RDSN} in the sense of distributions:
\begin{equation}\label{16}
\DD(\hat{u}_k)=S_{*}(\DD(\hat{u}_{k-1},\hat\eta_k)) \quad \forall k\in \Z,
\end{equation}
where $\{\hat\eta_k\}_{k\in \Z}$ is distributed as $\{\eta_k\}_{k\in \Z}$. Process  $\hat u$ is such that
for any $\tau\in\Z$, any $\pi_\tau$-measurable
 initial state $u\in X$ and any $s\ge 0$, the corresponding trajectory $\{u_k={}_\tau u_k(u)\}_{k\geq \tau}$ of~\eqref{RDSN}, \eqref{ICN} satisfies inequalities
\begin{equation}\label{22}
\|\DD(\hat{u}_k,\cdots,\hat{u}_{k+s})-\DD({}_\tau u_k(u),\cdots,{}_\tau u_{k+s}(u))\|_{L}^{*}\leq C_s e^{-\gamma(k-\tau)},\quad k\ge \tau+1,
\end{equation}
and
\begin{equation}\label{uev}
\|\DD({}_\tau u_k(u))-\DD(\hat{u}_k)\|_{\mathrm{var}}\le C\,e^{-\gamma (k-\tau)}, \quad k\ge \tau+1.
\end{equation}
The positive constants~$C, C_s$ and~$\gamma$ do not depend on $\tau\in \Z$. The property \eqref{22} defines the distribution of process $\{\hat u_k\}$
in a unique way.
\end{theorem}

In Section \ref{s_6.1.1} we show that the restrictions {\rm \hyperlink{SF}{(SF)}, \hyperlink{RZ}{(RZ)} and \hyperlink{LCR}{(LCR)}}(b), imposed
on the process $\eta_k$, are mild, and are satisfied by plenty of random processes.

\begin{remark} 1) Let  $u, u' \in X$ be non-random vectors. Applying the argument that proves
  Theorem~\ref{mixingL} not to solutions $\{{}_\tau U_k(V)\}$ and $\{\hat U_k\}$, but to $\{{}_\tau U_k(V)\}$  and
  $\{{}_\tau U_k(V')\}$, where $V=(u, {}_\tau\eeta_\tau)$ and  $V'=(u', {}_\tau\eeta_\tau)$, we derive from
  \eqref{evL} that
  \begin{equation}\label{uev1}
\|\DD({}_\tau u_k(u))- \DD({}_\tau{u'}_k) (u')\|_{\mathrm{var}}\le C' \| u-u'\|_H \,e^{-\gamma (k-\tau)}, \quad k\ge \tau.
\qed
\end{equation}

 \noindent
2) If the forcing  $\{\eta_k\}_{k \in \mathbb{Z}}$ in system \eqref{RDSN} is a stationary process, then the process 
$\{\hat{u}_k\}_{k \in \mathbb{Z}}$ is stationary as well. In this case, the assertions of the theorem correspond to 
the mixing property for system \eqref{RDSN}, as discussed in \cite{KS-2025}, \cite{KS-2026}.
\end{remark}

To prove the theorem, we consider the lifting of  \eqref{RDSN} to the Markovian system \eqref{RDSL} (see Lemma~\ref{l_lift}), 
show in the theorem below that the latter system is mixing and then derive Theorem~\ref{mixing} from that result.

\begin{theorem}\label{mixingL}
Suppose that Hypotheses~{\rm\hyperlink{SF}{(SF)}, \hyperlink{RZ}{(RZ)}, \hyperlink{LCR}{(LCR)}} and~\hyperlink{GD}{\rm(GD)} are fulfilled. Then the Markov process
 on $\widehat\Omega$ defined by RDS~\eqref{RDSL} is exponentially mixing in the dual-Lipschitz distance. That is, there are positive numbers~$C$ and~$\gamma$, independent from $\tau\in \Z$, such that for any $\tau\in \Z$ and any initial states $V,V^\prime\in \XXXX$, which are $\hat\FF_\tau-$measurable random variables, the corresponding trajectories $\{U_k\}_{k\geq \tau},\{U_k^\prime\}_{k\geq \tau}$ of~\eqref{RDSL}, \eqref{ICL} satisfy the inequalities
	\begin{equation}\label{evL}
		\|\DD(U_k)-\DD(U_k^\prime)\|_L^*\le C \| \DD(V) -\DD(V')\|_L^*\,e^{-\gamma (k-\tau)}, \quad k\ge \tau.
	\end{equation}
In the space $\XXXX$, there exists an unique in distribution inhomogeneous Markov process $\{\hat{U}_k\}_{k\in \Z}$ (defined on some other probability space)
with the transition probabilities $P_k$ as in \eqref{TF}, and such that for any $\tau\in \Z$ and any $\hat\FF_\tau-$measurable $V\in \XXXX$, the solution $\{ {}_\tau U_k(V)\}_{k\ge \tau}$
of~\eqref{RDSL}, \eqref{ICL} satisfies the inequalities
	\begin{equation}\label{Uev}
		\|\DD(\hat{U}_k)-\DD({}_\tau U_k(V))\|_L^*\le C\| \DD(V) -\DD(V')\|_L^*
		 \,e^{-\gamma (k-\tau)}, \quad k\ge \tau.
	\end{equation}
The process $\{\hat{U}_k\}_{k\in \Z}$ satisfies relations~\eqref{RDSL} with $\xi_k=\eta_k$
in the sense of distributions. It means that there exists a process $\{\hat\eta_k\}_{k\in \Z}$ distributed as $\{\eta_k\}_{k\in \Z}$, such that
\begin{equation}\label{15}
\DD(\hat{U}_k)=\SSS_{*}(\DD(\hat{U}_{k-1},\hat\eta_k)), \quad \forall k\in \Z.
\end{equation}
\end{theorem}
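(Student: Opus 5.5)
We prove Theorem~\ref{mixingL} by a Doeblin-type coupling for the lifted Markov chain \eqref{RDSL}, formulated through a Kantorovich functional, as in \cite{kuksin2006, KS-book}. The plan has four steps. The core estimate is a \emph{squeezing} statement combined with a \emph{recurrence} statement, both uniform in $\tau$.

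\emph{Step 1: squeezing coupling.} We claim that there exist $d_0>0$, $q\in(0,1)$ and $c>0$ with the following property. For any $k$ and any $U,U'\in\XXXX$ with $\dd_\XXXX(U,U')\le d_0$, one can construct random variables $R,R'$ on one probability space such that
\[
\DD(R)=P_k(U;\cdot),\qquad \DD(R')=P_k(U';\cdot),
\]
and
\[
\IP\{\dd_\XXXX(R,R')>q\,\dd_\XXXX(U,U')\}\le c\,\dd_\XXXX(U,U').
\]
We also require $\dd_\XXXX(R,R')\le C\dd_\XXXX(U,U')$ almost surely. The construction proceeds as follows.
\begin{itemize}
\item First couple the $F^\perp$-components. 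By (SF), the projections $Q^\perp_{kF}(\xxi;\cdot)$ and $Q^\perp_{kF}(\xxi';\cdot)$ are $C\dd(\xxi,\xxi')$-close in total variation, so a maximal coupling makes $y_F^\perp=y_F'^\perp$ outside an event of probability $\le C\dd$.
\item On the event $y_F^\perp=y_F'^\perp$, use (LCR)(a) and the implicit function theorem (here $\dim H<\infty$ and $S\in C^2$) to find a map $y_F\mapsto\Phi(y_F)$, $C^1$-close to the identity on $F$, such that $S(v',(y_F^\perp,\Phi(y_F)))=S(v,(y_F^\perp,y_F))$. The size of the correction is $O(\|v-v'\|_H)$.
\item Then $\Phi_*(Q_{kF}(\xxi,y_F^\perp;\cdot))$ and $Q_{kF}(\xi',y_F^\perp;\cdot)$ are close in total variation. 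This uses the Lipschitz densities from (LCR)(b) together with \eqref{20}: the distance is at most $C(\|v-v'\|_H+\dd(\xxi,\xxi'))$. A maximal coupling therefore gives equal $H$-components with high probability.
\item Compactness of $\KK$ lets us handle boundary effects of the supports.
\end{itemize}
When the $H$-components agree, the new distance is
\[
\dd(({\xxi},\eta),(\xxi',\eta))=\alpha\,\dd(\xxi,\xxi'),
\]
since the last coordinate coincides and the shift multiplies the metric \eqref{distance-EE} by $\alpha$. So we may take $q=\alpha$. The parameter $L$ in \eqref{metric-X} is chosen so that the failure events are controlled.

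\emph{Step 2: recurrence.} We show that for any $U,U'$ and any $\tau$, the two independent copies of the chain enter the set $\{\dd_\XXXX\le d\}$ within a fixed number $m$ of steps with probability $\ge p>0$, uniformly in $\tau$.
\begin{itemize}
\item By (RZ), with $n$ large and $\delta$ small, both forcings stay $\delta$-close to $0$ for $n$ steps after $s$ steps, with positive probability. Independence makes this probability a product.
\item By (GD) and the continuity of $S$ on the compact set $X\times\KK$, both $H$-components then become small.
\item The $\bKK$-components agree in their last $n$ coordinates up to $2\delta$ each. The older tail contributes at most $C\alpha^n$ to $\dd$.
\end{itemize}

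\emph{Step 3: exponential mixing.} Combine Steps 1 and 2 in the standard way.
\begin{itemize}
\item Define a coupling operator: apply the squeezing coupling when $\dd\le d_0$, and the independent coupling otherwise.
\item Consider the functional $\E\,\dd_\XXXX(U_k,U_k')^{\theta}$, or a Kantorovich functional $\E f(U_k,U_k')$ with a suitable $f$. Iterating Steps 1–2 together with the uniform-in-$\tau$ bounds, this functional decays geometrically.
\item The resulting bound is $\E\,\dd(U_k,U_k')\le Ce^{-\gamma(k-\tau)}\,\E\,\dd(V,V')$. Taking an optimal initial coupling and using \eqref{K_R} and \eqref{Lip_Kant} ($\XXXX$ is bounded) yields \eqref{evL}.
\item For random $V,V'$ that are $\hat\FF_\tau$-measurable, condition on $V$ and use Lemma~\ref{l_Lip}.
\end{itemize}

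\emph{Step 4: the limiting process $\hat U$.} Set $\mu_k^{(\tau)}={}_\tau\PPPP^*_k\delta_{U_0}$. By \eqref{evL} and the cocycle property \eqref{flow}, for fixed $k$ the family $\mu_k^{(\tau)}$ is Cauchy as $\tau\to-\infty$ in the complete metric $\|\cdot\|^*_L$ on $\PP(\XXXX)$; note $\XXXX$ is compact. Its limit $\hat\mu_k$ satisfies ${}_k\PPPP^*_{k+1}\hat\mu_k=\hat\mu_{k+1}$.
\begin{itemize}
\item Kolmogorov's extension theorem, using Lemma~\ref{l_Lip}(3) for the finite-dimensional marginals, gives the Markov process $\hat U$.
\item Estimate \eqref{Uev} follows from \eqref{evL} applied with $V'\sim\hat\mu_\tau$.
\item Uniqueness: any other such process has marginals within $Ce^{-\gamma(k-\tau)}$ of these for every $\tau$; let $\tau\to-\infty$.
\item For \eqref{15}, the $\bKK$-component of $\hat U$ has transitions $Q_k$, hence is distributed as $\eeta_k$. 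We then set $\hat\eta_k$ to be its last coordinate.
\end{itemize}

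The main obstacle is Step 1. The key point is obtaining total-variation closeness of $\Phi_*Q_{kF}$ and $Q'_{kF}$ that is \emph{linear} in $\dd$ and uniform in $k$. This needs uniform control of $D\Phi$ via $C^2$ bounds and of the inverse of $D_\eta S|_F$ on $O$. It also requires handling densities supported on $\KK$ without a boundary-regularity assumption; the Lipschitz continuity of $\rho_{lF}$ on all of $E$ (with vanishing outside $\KK$) is what should make this work.
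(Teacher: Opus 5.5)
Your architecture is the paper's: a squeezing coupling for nearby pairs (Proposition~\ref{p-constructionRR}), recurrence of two independent copies via (RZ) and (GD) (Proposition~\ref{pc2}), geometric decay of a capped Kantorovich functional (Section~\ref{PML}), and the limit $\tau\to-\infty$ with Kolmogorov's theorem for $\hat U$.

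Your Step~1 is implemented differently. You use exact matching of the $H$-components by the implicit function theorem and a two-stage maximal coupling (first the $F^\perp$-projections, then the fibre measures). The paper instead uses a linearised \emph{approximate} squeezing via the Moore--Penrose right inverse, and glues $(\zeta^{\xxi},\varPsi(U,U',\zeta^{\xxi}))$ with a maximal coupling of $\varPsi_*\PPP_k(U;\cdot)$ and $\PPP_k(U';\cdot)$. Your variant is legitimate for $\dim H<\infty$ and makes the total-variation estimate a direct change-of-variables computation rather than a quotation from \cite{KS-2025}. Unlike the paper's version, however, it does not carry over to Section~\ref{s_5}.

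\textbf{The error in Step~1.} The contraction computation is wrong. When the $H$-components agree, the noises are $\eta=(y_F^\perp,y_F)$ and $\eta'=(y_F^\perp,\varPhi(y_F))$, and they do \emph{not} coincide: $\varPhi$ is exactly the correction compensating $v\neq v'$. The newest coordinate enters \eqref{distance-EE} with weight $\alpha^0=1$, so
\[
\dd_\XXXX\bigl(\SSS(U,\eta),\SSS(U',\eta')\bigr)=\|\varPhi(y_F)-y_F\|_E+\alpha\,\dd(\xxi,\xxi')\le C'\|v-v'\|_H+\alpha\,\dd(\xxi,\xxi').
\]
For arbitrary $L$ this is not bounded by $q\,\dd_\XXXX(U,U')$. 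You must take $L\ge C'/\alpha$ in \eqref{metric-X}, so that $C'\|v-v'\|_H\le \alpha L\|v-v'\|_H$. This is precisely the role of $L$ in Lemma~\ref{LS}. It has nothing to do with controlling the failure events, whose probability is at most $C(\|v-v'\|_H+\dd(\xxi,\xxi'))\le C\,\dd_\XXXX(U,U')$ for every $L\ge1$. With this choice of $L$, your $q=\alpha$ is correct.

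\textbf{Two smaller points.} First, drop the requirement $\dd_\XXXX(R,R')\le C\,\dd_\XXXX(U,U')$ almost surely. It cannot be guaranteed: on the failure event of a maximal coupling the noises are unrelated, and total-variation closeness of $Q^\perp_{kF}(\xxi;\cdot)$ and $Q^\perp_{kF}(\xxi';\cdot)$ says nothing about their supports. It is also unnecessary, since the functional $F$ of Section~\ref{PML} is capped at $R=2\vt$, so the failure event costs only $N_1 dR$; for a power of $\dd_\XXXX$ the diameter of $\XXXX$ suffices.

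Second, in Step~4, ``the $\bKK$-component of $\hat U$ has transitions $Q_k$, hence is distributed as $\eeta_k$'' hides a uniqueness statement. That statement is itself a consequence of the mixing you proved. Apply \eqref{evL} with $V=(0,{}_\tau\eeta_\tau)$ and $\DD(V')=\DD(\hat U_\tau)$; the $\bKK$-component of ${}_\tau U_k(V)$ is ${}_\tau\eeta_k\sim\eeta_k$. Project to $\bKK$ and let $\tau\to-\infty$. Then $\hat U_k=\SSS(\hat U_{k-1},\hat\eta_k)$ a.s., with $\hat\eta_k$ the last coordinate, which gives \eqref{15}.
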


If the process $\{\eta_k\}$ is stationary, then system \eqref{RDSL} defines a homogeneous Markov processes, and 
$\{\hat{U}_k\}_{k\in \Z}$ is a stationary Markov process  (unique in distribution).

\begin{remark}\label{Re1}
1) We note that system \eqref{RDSL} is constructed not in terms of the mapping $S$ and  process $\{\eta_{k}\}$, but rather in terms of
the map $S$ and the transition probabilities $\{Q_l\}_{l\in \Z}$. Similar, the conditions, imposed on the noise in the theorem, also are made not in terms of
the process, but in terms of probabilities $\{Q_l\}_{l\in \Z}$. That is, Theorem~\ref{mixingL} is an assertion about the mapping
$S$ and the transition probabilities $\{Q_l\}_{l\in \Z}$ which satisfy the assumptions of the theorem.

\noindent
2) Relation \eqref{evL} and assertion 3) of Lemma~\ref{l_Lip}, applied to measures $\DD(\hat{U}_k)$ and $\DD(U_k(V))$, imply that for any $k\ge \tau+1$ and any $s\ge 0,$
\begin{equation}\label{evLs}
		\|\DD(\hat{U}_k,\ldots,\hat{U}_{k+s})-\DD(U_k(V),\ldots,U_{k+s}(V))\|_L^*\le C_s e^{-\gamma (k-\tau)}. \qed
\end{equation}
\end{remark}

As we show below in Section 3.3, Theorem~\ref{mixing} is an easy consequence of Theorem~\ref{mixingL} and some results from \cite{KS-2026}. But the  Theorem~\ref{mixingL} is interesting by itself, and in the next section we show how it may be applied to get a Dobrushin-like result for  random processes.

\subsection{Reconstructing a random process from its regular conditional distributions}\label{s_3.2}
Let us analyse how Theorem~\ref{mixingL} applies to a system \eqref{RDSL}, where
 the map $S$ in \eqref{S-stationary} is very degenerate.
Namely, let us consider the case when $H=\{0\}$ (i.e. $H$ is the zero-dimensional space), $E$ is a Hilbert space and $S$ is the mapping
\begin{equation*}
S: H\times E\rightarrow H,~(0,\eta)\mapsto 0.
\end{equation*}
It trivially satisfies Hypotheses~{\rm \hyperlink{GD}{(GD)}, \hyperlink{LCR}{(LCR)}}, and for it Hypothesis~{\rm \hyperlink{LCR}{(LCR)}} with $F=\{0\}$ becomes a tautology{\footnote{If the reader is not happy with the trivial map $S$ we talk about, he/she may easily see that for it the proof of Theorem~\ref{mixingL}, given below in Section~\ref{s_4},
 remains true with some significant simplifications.}}.
Let $\KK\subset E$ and $\boldsymbol{\KK}=\KK^{\Z_{-}}$ be as above, and let $\{Q_l\}_{l\in \Z}$ be some transition probabilities, satisfying Hypotheses~{\rm \hyperlink{SF}{(SF)}} and~\hyperlink{RZ}{\rm(RZ)}. Dropping the trivial component of $U_k$ in $H=\{0\}$, we write the corresponding system \eqref{RDSL} with $k\geq \tau+1$ and some
 fixed $\tau\in \Z$ (defined on a suitable probability space $(\widehat\Omega,\widehat\FF,\widehat\IP)$ as in (i)-(iii) above) as
\begin{equation}\label{8}
{}_\tau\xxi_k=\bigl({}_\tau\xxi_{k-1},\zeta_k^{{}_\tau\xxi_{k-1}}\bigr), 
\end{equation}
$$
{}_\tau\xxi_\tau=\hat\eeta_\tau\in\bKK.
$$
Here the random fields $\{\zeta_k^\eeta\}$ are as in item (ii), and $\hat\eeta_\tau\in\bKK$ is an $\widehat\FF_\tau-$measurable random variable. System \eqref{8} defines in
$\bKK$ an inhomogeneous Markov process with transitional probabilities
\begin{equation}\label{2}
P_k(\xxi;\cdot)= \big(\delta_{\xxi},Q_k(\xxi;\cdot)\big)\in \PP(\bKK), \quad k\in\Z.
\end{equation}
Naturally, a solution $\{{}_\tau\xxi_k\}_{k\ge \tau}$ of \eqref{8} has the form
\begin{equation}\label{1}
{}_\tau\xxi_k=(\hat\eeta_\tau,\xi_{\tau+1},\ldots,\xi_{k}),~k\ge \tau.
\end{equation}
Applying Theorem~\ref{mixingL}, we get that there is a unique in distribution process $\{\hat\xi_k\}_{k\in\Z}$ 
such that the vectors $\hat\xxi_k=(\cdots,\hat\xi_{k-1},\hat\xi_{k}), k\ge \tau+1,$ make a trajectory of \eqref{8}, 
for any $\tau$. Since measures \eqref{2} are transition probabilities for the Markov process $\{\hat\xxi_k\}$,
then regular conditional distributions $\IP(\hat\xi_{k+1}|\hat\xxi_k)$ equal $P_k(\hat\xxi_k; \cdot) $. 
For any $\tau\in \Z$ and any vector $\hat\eeta_{\tau}\in \bKK$, the law of ${}_\tau \xxi_{\tau+1}$ 
(see \eqref{8}) is the conditional distribution of $\hat\xxi_{\tau+1}$, given that $\hat\xxi_{\tau}=\hat\eeta_\tau$. And, more generally, the law of
${}_\tau \xxi_{k},k>\tau,$ is a conditional distribution of $\hat\xxi_{k}$, given that $\hat\xxi_{\tau}=\hat\eeta_\tau$. Thus, for any $\tau\in\Z,~k\ge 1$ and $s\ge 0$, taking in \eqref{Uev} $k=\tau+k+s$, $\hat U_k=\hat \xxi_k$ and $U_k(V)={}_\tau\xxi_k$, we get that
\begin{equation*}
\|\DD(\hat\xxi_{\tau+k+s}~|~\hat\xxi_{\tau}=\hat\eeta_\tau)-\DD(\hat\xxi_{\tau+k+s})\|_L^*\le C e^{-\gamma (k+s)},~\forall\hat\eeta_\tau\in \bKK.
\end{equation*}
This is a relation between measures in $\bKK=\{\xxi=(\ldots,\xi_{-1},\xi_{0})\}$. Applying to it the projection $\bKK\ni \xxi\mapsto(\xi_{-s+1},\ldots,\xi_{0})\in \KK^{s}$ (and remembering that the space $\bKK$ is given distance \eqref{distance-EE}), we arrive at the estimate
\begin{equation}\label{5}
\|\DD((\hat\xi_{\tau+k+1},\ldots,\hat\xi_{\tau+k+s})~|~\hat\xxi_{\tau}=\hat\eeta_\tau)-\DD(\hat\xi_{\tau+k+1},\ldots,\hat\xi_{\tau+k+s})\|_L^*
\le C_s e^{-\gamma k},
\end{equation}
valid for any $\tau\in \Z, k, s\ge 0$ and any vector $\hat\eeta_\tau\in \bKK.$  In the usual way this implies that
for any $\tau\in\Z$ and $N,s\ge 0,$
\begin{equation}\label{10}
\begin{split}
|\E f(\hat\xi_{\tau-N+1},\ldots,\hat\xi_{\tau})&g(\hat\xi_{\tau+k+1},\ldots,\hat\xi_{\tau+k+s})\\
-&\E f(\hat\xi_{\tau-N+1},\ldots,\hat\xi_{\tau})\E g(\hat\xi_{\tau+k+1},\ldots,\hat\xi_{\tau+k+s})| \le C_s e^{-\gamma k},
\end{split}
\end{equation}
where $f$ is any measurable function on $\R^{N}, |f|\le 1,$ and $g$ is any Lipschitz-continuous function on $\R^{s}$ such that
 $|g|\le 1$ and $\Lip g\le 1.$ This is an exponential mixing with a set of observables smaller than in the usual definitions
 (e.g. as in \cite{IL}). Following \cite[Section~4.2]{KS-2025}, we call it {\it feeble mixing}. We refer to \cite{KS-2025} for 
 discussion, and note that various definitions of mixing with different sets of observables have been intensively discussed during the 
 last 20-30 years; see \cite{DDL}. 

We summarise these results as a corollary from Theorem~\ref{mixingL}:

\begin{corollary}\label{cor1}
Let $\{Q_l\}_{l\in\Z}$ be a system of transition probabilities from $\bKK$ to $\KK,$ satisfying Hypotheses~{\rm \hyperlink{SF}{(SF)}} and~\hyperlink{RZ}{\rm(RZ)}.
Then there exists a unique in distribution random process $\{\hat\xi_s\in\KK\}_{s\in\Z}$, such that for each $k\in\Z$ its regular conditional
 distribution $\IP(\hat\xi_{k+1}|\hat\xxi_k)$ equals $Q_k(\hat\xxi_k;\cdot)$. This process is feebly exponentially 
  mixing in the sense that it satisfies relations \eqref{5} and \eqref{10}.
\end{corollary}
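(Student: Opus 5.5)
Corollary~\ref{cor1} follows from applying Theorem~\ref{mixingL} to the degenerate system with $H=\{0\}$ and $S(0,\eta)=0$, as set up just before the statement; the work lies in checking that the hypotheses hold and in translating the conclusions back to the process $\{\hat\xi_k\}$.

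\emph{Step 1: hypotheses.} First I check that Hypotheses \hyperlink{GD}{(GD)} and \hyperlink{LCR}{(LCR)} hold trivially for this map. Take $X=\{0\}$ and $F=\{0\}$. Then $D_\eta S:F\to H=\{0\}$ is surjective, and the disintegration \eqref{24} is trivial, with $\ell_F$ the Dirac mass at $0$ and $\rho_{lF}=\mathbf 1_{\bKK\times\KK}$ (this is the reading under which (b) is a tautology). Hypotheses \hyperlink{SF}{(SF)} and \hyperlink{RZ}{(RZ)} are assumed. The lifted phase space is $\XXXX=\{0\}\times\bKK\cong\bKK$, and \eqref{RDSL} becomes \eqref{8} with transition probabilities \eqref{2}. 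Theorem~\ref{mixingL} then yields a Markov process $\{\hat\xxi_k\}_{k\in\Z}$ in $\bKK$, unique in distribution, with transition probabilities \eqref{2}, and satisfying \eqref{Uev}.

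\emph{Step 2: existence.} I then extract $\hat\xi_k$ as the last coordinate of $\hat\xxi_k$. Because $P_k(\xxi;\cdot)$ is supported on $\{(\xxi,y):y\in\KK\}$, we have a.s.\ $\hat\xxi_{k+1}=(\hat\xxi_k,\hat\xi_{k+1})$. Hence $\hat\xxi_k$ equals the past $(\dots,\hat\xi_{k-1},\hat\xi_k)$ of the scalar process, so $\sigma(\hat\xxi_k)$ coincides with the $\sigma$-algebra generated by $\{\hat\xi_l\}_{l\le k}$. The Markov property then gives $\IP(\hat\xi_{k+1}\in\cdot\mid\hat\xxi_k)=Q_k(\hat\xxi_k;\cdot)$, which proves existence.

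\emph{Step 3: uniqueness.} This is the step needing care. Let $\{\xi'_k\}$ be any process whose regular conditional distributions are the $Q_k$. Then $\xxi'_k$ is a Markov process in $\bKK$ with the transition probabilities \eqref{2}, defined for all $k\in\Z$. Fix $\tau$. For $k\ge\tau$, the law of $\xxi'_k$ equals ${}_\tau\PPPP^*_k\DD(\xxi'_\tau)$, the law of the solution ${}_\tau U_k$ of \eqref{8} started from $\DD(\xxi'_\tau)$. By \eqref{Uev} and boundedness of $\|\cdot\|_L^*$ by $2$,
\[
\|\DD(\xxi'_k)-\DD(\hat\xxi_k)\|_L^*\le 2Ce^{-\gamma(k-\tau)}.
\]
Letting $\tau\to-\infty$ gives $\DD(\xxi'_k)=\DD(\hat\xxi_k)$ for every $k$. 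Since $\xxi'_k$ contains the whole past up to $k$, the finite-dimensional laws of $\{\xi'_l\}_{l\le k}$ agree with those of $\{\hat\xi_l\}_{l\le k}$ for all $k$, and uniqueness follows. Alternatively, one may invoke the uniqueness clause of Theorem~\ref{mixingL} directly.

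\emph{Step 4: mixing estimates.} Relations \eqref{5} and \eqref{10} are obtained as in the discussion preceding the corollary. Apply \eqref{Uev} with deterministic initial datum $V=\hat\eeta_\tau$, identifying ${}_\tau\xxi_k$ with a version of $\DD(\hat\xxi_k\mid\hat\xxi_\tau=\hat\eeta_\tau)$ via the Markov property. Then push forward under the projection onto the last $s$ coordinates. This projection is Lipschitz with constant $\alpha^{-(s-1)}$ for the metric \eqref{distance-EE}, which produces the constant $C_s$ in \eqref{5}. For \eqref{10}, condition on $\hat\xxi_\tau$: the factor $f$ is measurable with respect to $\sigma(\hat\xxi_\tau)$ and satisfies $|f|\le1$, so
\[
|\E fg-\E f\,\E g|\le\E\big|\E(g\mid\hat\xxi_\tau)-\E g\big|,
\]
and \eqref{5}, applied pointwise in $\hat\xxi_\tau$, bounds the right-hand side by $C_se^{-\gamma k}$.
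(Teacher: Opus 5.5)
Your proposal is correct and follows essentially the same route as the paper. In both, one applies Theorem~\ref{mixingL} to the trivial system with $H=\{0\}$ and reads off $\{\hat\xi_k\}$ from the lifted Markov process $\{\hat\xxi_k\}$ with transition probabilities \eqref{2}; then \eqref{5} and \eqref{10} follow from \eqref{Uev} by identifying ${}_\tau\xxi_k$ with the conditional law given $\hat\xxi_\tau=\hat\eeta_\tau$, projecting to the last $s$ coordinates, and conditioning on $\hat\xxi_\tau$. Your Step~3 usefully makes explicit a point the paper leaves implicit: since $\hat\xxi_k$ encodes the entire past, equality of the one-time laws of $\hat\xxi_k$ already forces equality of all finite-dimensional distributions of the scalar process.
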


Clearly, it is easy to construct examples of transition probabilities $\{Q_l\}$, satisfying Hypotheses~{\rm \hyperlink{SF}{(SF)}} and~\hyperlink{RZ}{\rm(RZ)}, and  the corollary gives a tool to build  a unique in distribution 
  random processes whose  regular conditional distributions equal  $Q_l$'s. 
This result is related to Dobrushin's theorems on reconstructing random processes and random fields from their conditional distributions, see \cite{dobrushin-1970} and \cite{follmer1988}.

\subsection{Derivation of Theorem~\ref{mixing} from Theorem~\ref{mixingL}}
\begin{proof}[Proof of Theorem~\ref{mixing}]
Since relation \eqref{uev} implies \eqref{ev} with the doubled constant $C$, then it suffices to prove the assertions
 of the theorem, related to the process $\{\hat u_k\}$. To do that, we define the latter as $\hat u_k=\Pi_{X}\hat U_k, k \in \Z$, where $\{\hat U_k\}$ is the process from Theorem~ \ref{mixingL}. Then, relation
\eqref{16} directly follows from \eqref{15}.

To establish \eqref{22}, we note that by Lemma~\ref{l_lift}, the process $\{U_k=({}_\tau u_k(u),\eeta_k\}_{k\ge\tau}$ is
distributed as process $\{{}_\tau U_k(V)\}$, $V=(u,{}_\tau\eeta_\tau)$. Thus, \eqref{22} follows by applying the projection $\Pi_{X}\times\cdots\times\Pi_{X}$ to
relation \eqref{evLs}.

To establish \eqref{uev}, we denote $\DD(\hat U_k)=\hat m_k, \DD({}_\tau U_k(V))=m_k$. Then, by \eqref{Uev},
\begin{equation}\label{23}
\|\hat{m}_k-m_k\|_L^*\le C\,e^{-\gamma (k-\tau)}, \quad k\ge \tau+1.
\end{equation}
This implies  \eqref{uev}   with the total variation distance replaced by the weaker dual-Lipschitz one. To get \eqref{uev} in the
required form we exploit the approach, used in \cite{KS-2026}. For that we
awake Hypothesis~{\rm \hyperlink{LCR}{(LCR)}}, the decomposition $E=F\oplus F^\perp$
and projections $\mathsf P_F, \mathsf P_F^{\perp}$, used there.
Recall that both processed $\hat U_k$ and ${}_\tau U_k(V)$ are inhomogeneous Markov processes in $\XXXX$ with transitional probabilities $P_k(U;\cdot)$.

Denote 
 $\nu_{k+1}=(\Pi_X)_* m_{k+1} =\DD(u_{k+1}) $.  Then by \eqref{TF}, 
$
\nu_{k+1}=\int S_*(u, Q_k(\xxi;\cdot)) m_k(d U)$, $ U=(u,\xxi).
$
Decomposing $Q_k(\xxi;\cdot)$ as in \eqref{24}  we get that 
$$
S_* (u, Q_k(\xxi; \cdot)) = \int_{F^\perp}S_*\big(u, (\xi_F^\perp, Q_{kF}(\xxi, \xi_F^\perp; \cdot)\big) Q_{kF}^\perp(\xxi; d\xi_F^\perp).
$$
So
\begin{equation}\label{3}
\nu_{k+1}=\int_\XXXX\int_{F^\perp} 
S_*\big(u,(\xi_{F}^{\perp}, Q_{kF}(\xxi,\xi_{F}^{\perp};\cdot))\big) Q_{k F}^{\perp}(\xxi; d\xi_{F}^{\perp})m_k(d U)
\end{equation}
By Hypothesis~{\rm \hyperlink{LCR}{(LCR)}}(b), 
$$
\pi^k := Q_{kF} (\xxi, y_F^\perp; dy_F)= \rho_{kF}(\xxi, y_F^\perp, y_F) \ell_F(dy_F). 
$$
This implies that measure  
 $
 S_*(u,(\xi_{F}^{\perp}, Q_{kF}(\xxi,\xi_{F}^{\perp};\cdot))) =  S_*(u,(\xi_{F}^{\perp}, \pi^k)) $ 
 has a density against the Lebesgue measure. That is,  it equals 
$ g_{k+1}(\xi_F^\perp, U,x)\ell_H (dx)$, where functions $g_r$, $r\in \Z$, are uniformly Lipschitz. Indeed,
using a partition of unity on  a neighbourhood of $   \mathsf P_{F} \KK$ in $F$,  we write the density 
$
y_F\mapsto \rho_{kF} (\xxi, y_F^\perp, y_F)
$
as a finite sum of functions, supported by small balls $B_j := B_F(y_{Fj}, r_j)$, where $j=1, \dots, N$ and $y_{Fj} \in  \mathsf P_{F} \KK$. 
Thus we decomposed  $\pi^k$ to a sum of $N$ measures $\pi^k_j$, supported by the balls $B_j\subset F$. Next, using 
{\rm \hyperlink{LCR}{(LCR)}}(a), in any ball $B_j$ we introduce local coordinates such that in them the map 
$
y_F \mapsto S(u, y_F^\perp, y_F)
$
becomes a projection to the first $\dim H$ components of $y_F$. Then an easy calculation shows that the measures
$S_*(u,(\xi_{F}^{\perp},  \pi^k_j ))$ have Lipschitz densities against $\ell_H (dx)$, and the assertion for measure 
$ S_*(u,(\xi_{F}^{\perp}, \pi^k)) $   follows. 
See \cite[Theorem~4.1]{KS-2026} for a detailed proof. 

Thus we derive from \eqref{3} that 
$$
(\Pi_X)_*{m}_{k+1}= \nu_{k+1} =
 \Big[\int_\XXXX \int_{F^\perp} {g_{k+1}}({ \xi_F^\perp, U}, x)Q_{F}^{\bot}(\xxi;d\xi_{F}^{\perp}) m_k(dU)\Big]\ell_H(dx),
$$
and similar 
$
(\Pi_X)_*\hat{m}_{k+1}=
 \Big[\int\int {g_{k+1}}({ \xi_F^\perp, U}, x)Q_{F}^{\bot}(\xxi;d\xi_{F}^{\perp})\hat m_k(dU)\Big]\ell_H(dx).
$
From here and \eqref{20} it follows that 
\begin{equation}\label{d10}
\|(\Pi_X)_*\hat{m}_{k+1}-(\Pi_X)_*m_{k+1}\|_{var}\leq
 C\|Q_{F}^{\bot}(\xxi;d\xi_{F}^{\perp})\hat m_{k}(dU)-Q_{F}^{\bot}(\xxi;d\xi_{F}^{\perp}) 
 {m}_{k}(dU)\|_{L,\KK_{F}^{\perp}\times \XXXX}^{*}
\end{equation}
(see  Corollary 4.2 in \cite{KS-2026}).
From other hand, for any function $f(\xxi, \xi_F^\perp)$ such that $\| f\|_L \le1$ (see \eqref{LipD}) we have 
$
\int Q_F^\perp (\xxi, d\xi_F^\perp) f(\xxi, \xi_F^\perp) = \int Q (\xxi, d\xi) f(\xxi,  \mathsf P_F^{\perp}\xi) := g(\xxi), 
$
where by Hypothesis~{\rm \hyperlink{SF}{(SF)}}, $\| g\|_L \le C$. So the integral of $f$ against the measure under the norm sign in the r.h.s. of
\eqref{d10} is bounded by $C_1\|\hat{m}_{k}-m_{k}\|_{L}^{*}$, for any $f$ as above. 
  Hence,  the r.h.s. itself is bounded by the same constant. This relation, 
jointly with estimates \eqref{d10} and \eqref{23}, establish \eqref{uev} for each $k:=k+1$.

Finally, since convergence \eqref{22} defines finite dimensional distributions of process $\{\hat u_k\}$ uniquely, then the law of $\{\hat u_k\}$
also is uniquely defined.
\end{proof}

\section{Proof of Theorem~\ref{mixingL}}\label{s_4}
Firstly we  establish \eqref{evL}. For that we apply the method of Kantorovich functional
as in \cite{kuksin2006} and \cite[Section~3.1.1]{KS-book}. We do that in three steps:
 in Section \ref{3-1} we construct certain coupling operators.
Next in Section \ref{NewP} we use them to build a new pair of processes,
distributed as the solutions $\{U_k\}$ and $\{U^\prime_k\}$.
Then, to get \eqref{evL}, we iteratively calculate the values of a suitable Kantorovich functional
 on the pairs $(U_k,U_k')$, $k=\tau, \tau+1, \dots$,  and show that they exponentially fast decay to zero.
 This implies \eqref{evL}. 
Finally, we use \eqref{evL} to build a solution $\{\hat{U}_k\}_{k\in \Z}$ and prove \eqref{Uev}.

\subsection{Construction of coupling operators~$(\RR_{k},\RR_{k}')_{k\in \Z}$}\label{3-1}

For any $\delta>0$, let us set
\begin{align}\label{D}
D_\delta&:=\{(U,U')\in\XXXX\times\XXXX:\dd_\XXXX(U,U') \le \delta\}.
\end{align}
Let $(\Omega,\FF,\IP)$ be a complete probability space as above.

\begin{proposition}\label{p-constructionRR}
Suppose that Hypothesis~{\rm \hyperlink{LCR}{(LCR)}} is fulfilled.
Then for each $k\in \Z$, there exist measurable maps $\RR_{k},\RR_{k}':\XXXX\times\XXXX\times\Omega\to\XXXX$ and positive numbers~$\theta$, $N$, and $q<1$ satisfying the inequality
\begin{equation}\label{const}
N\theta<1-q,	
\end{equation}
such that $\RR_{k}=\RR_{k}(U,U';\omega)$ and $\RR_{k}'=\RR_{k}'(U,U';\omega)$ form a coupling for the pair of measures $(P_{k}(U;\cdot), P_{k}(U';\cdot))$:
for any $U,U'\in\XXXX$,
\begin{equation}\label{couple}
\DD(\RR_{k})=P_{k}(U;\cdot), \quad
\DD(\RR_{k}')=P_{k}(U';\cdot).
\end{equation}
Moreover, the following properties are fulfilled:
\begin{description}	
	\item [\hypertarget{independence}{Independence}.] For $(U,U')\in  D_\theta^c$, the random variables $\RR_{k}(U,U')$ and $\RR_{k}'(U,U')$ are independent.
	\item [\hypertarget{squeezing}{Squeezing}.] 	
For $(U,U') \in  D_\theta$, we have
\begin{equation}\label{P-RR}
	\IP\bigl\{\dd_\XXXX\bigl((\RR_{k}(U,U'),\RR_{k}'(U,U')\bigr)\le q\,\dd_\XXXX(U,U')\bigr\}
	\ge 1-N\dd_\XXXX(U,U').
\end{equation}
\end{description}
\end{proposition}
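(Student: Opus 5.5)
Since $P_k(U;\cdot)$ is the image of $\PPP_k(U;\cdot)=Q_k(\xxi;\cdot)$ under $\eta\mapsto\SSS(U,\eta)=(S(v,\eta),(\xxi,\eta))$, it suffices to couple the noise laws $Q_k(\xxi;\cdot)$ and $Q_k(\xxi';\cdot)$ and push the resulting pair through $\SSS(U,\cdot)$ and $\SSS(U',\cdot)$. On $D_\theta^c$ I would take $\zeta$ and $\zeta'$ independent, which gives the \emph{independence} property directly. On $D_\theta$ the plan is to build a pair $(\zeta,\zeta')$ with the prescribed marginals and with
\[
S(v,\zeta)=S(v',\zeta'),\qquad \zeta-\zeta'\in F,\qquad \|\zeta-\zeta'\|_E\le C\,\dd_\XXXX(U,U'),
\]
on an event of probability at least $1-N\dd_\XXXX(U,U')$. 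Measurability in $(U,U',\omega)$ should come from a measurable selection of the maximal-coupling construction, using the fact that everything is built explicitly from densities.

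Given such a pair, the squeezing estimate follows from a short computation. The new $H$-components coincide. The new $\bKK$-components are $(\xxi,\zeta)$ and $(\xxi',\zeta')$, and under the metric \eqref{distance-EE} with the shift
\[
\dd\bigl((\xxi,\zeta),(\xxi',\zeta')\bigr)=\|\zeta-\zeta'\|_E+\alpha\,\dd(\xxi,\xxi').
\]
To get a factor $q<1$ in front of $\dd_\XXXX(U,U')=L\|v-v'\|+\dd(\xxi,\xxi')$, I must therefore control $\|\zeta-\zeta'\|$ by $C_1\|v-v'\|+C_2\,\dd(\xxi,\xxi')$. The catch is that $C_2$ must be smaller than $1-\alpha$. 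I would handle this by requiring $\zeta'_F{}^{\perp}=\zeta_F^\perp$, so that the $F$-component correction depends only on the discrepancy in $v$. Then $\|\zeta-\zeta'\|\le C_1\|v-v'\|$, and choosing $L$ large gives $C_1\|v-v'\|+\alpha\,\dd\le q(L\|v-v'\|+\dd)$ with $q=\max(\alpha,C_1/L)$, hence $q<1$. The dependence on $\xxi$ versus $\xxi'$ is then paid for only through the failure probability, which is fine.

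The core construction proceeds in three steps.

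\textbf{Step 1: couple the $F^\perp$-components.} Take a maximal coupling of $Q^\perp_{kF}(\xxi)$ and $Q^\perp_{kF}(\xxi')$. By (SF) they differ in variation by at most $C\,\dd(\xxi,\xxi')$, so $\zeta_F^\perp=\zeta'^\perp_F$ except with probability $C\,\dd(\xxi,\xxi')$.

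\textbf{Step 2: couple the $F$-components given $y^\perp$.} Using (LCR)(a) and the implicit function theorem on the neighbourhood $O$, for $v'$ close to $v$ there is a map $\Phi(y_F)=y_F+\delta(y_F)$, with $\|\delta\|\le C\|v-v'\|$, such that $S(v',y^\perp,\Phi(y_F))=S(v,y^\perp,y_F)$. The map $\Phi$ is a $C^1$-near-identity diffeomorphism, with Jacobian $1+O(\|v-v'\|)$ and derivatives controlled uniformly thanks to $C^2$-smoothness and compactness. I would construct it via a right inverse of $D_\eta S|_F$, solving within the affine fibre. By (LCR)(b), with Lipschitz densities, $\|\Phi_*Q_{kF}(\xxi,y^\perp)-Q_{kF}(\xxi',y^\perp)\|_{var}\le C(\|v-v'\|+\dd(\xxi,\xxi'))$.

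\textbf{Step 3: combine.} A maximal coupling of these two laws, with $\zeta_F$ drawn from $Q_{kF}(\xxi,y^\perp)$, gives $\zeta'_F=\Phi(\zeta_F)$ outside a set of probability $C\,\dd_\XXXX(U,U')$.

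Combining the failure probabilities yields the constant $N$. Finally, choose $\theta$ small enough that $N\theta<1-q$ and that the implicit function theorem applies on $D_\theta$.

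The main obstacle is Step 2. The density $\rho$ is only Lipschitz and need not be supported inside a region where $\Phi$ stays within $O$. Near the boundary of $\mathsf P_F\KK$, $\Phi$ is defined for arguments in $\KK$, but its image may leave $\KK$. This is harmless because $\rho$ vanishes outside $\KK$ and $O$ is a neighbourhood of $X\times\KK$; one uses the continuity of $\rho$ and the smallness of $\theta$. The other delicate point is obtaining a globally defined $\Phi$. For this I would patch it over a finite partition of unity on $F$ using local coordinates, as in the derivation of Theorem~\ref{mixing} following \cite[Theorem~4.1]{KS-2026}, making $\Phi$ fibre-preserving over the $H$-coordinates.
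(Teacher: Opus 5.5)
Your proposal is correct and follows essentially the paper's route. It uses an $F$-valued correction of size $O(\|v-v'\|_H)$ that stabilises the $H$-component, with $L$ large so the correction is absorbed in the $\bKK$-part of $\dd_\XXXX$; a total-variation bound for the noise law under the near-identity shift, from the Lipschitz densities in (LCR)(b); (SF) to pay for $\xxi\ne\xxi'$; and maximal couplings, with independence off $D_\theta$.

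The differences are only in implementation. First, you solve $S(v',y^\perp,\varPhi(y_F))=S(v,y)$ exactly by the implicit function theorem. The paper instead uses the explicit linearised correction $\tilde\varPhi=B\,(D_vS)(v-v')$, which only contracts the $H$-distance by $\tfrac12\alpha$ but makes the bound on $\Lip_\xi\tilde\varPhi$ immediate. Second, you couple in two stages ($F^\perp$-marginals first, then fibrewise via injectivity of $\varPhi$), whereas the paper glues $(\zeta,\varPsi(\zeta))$ with a maximal coupling of $\varPsi_*\PPP_k(U;\cdot)$ and $\PPP_k(U';\cdot)$. Also, the partition-of-unity patching you mention is unnecessary, and averaging local solutions that way would not preserve your exact identity. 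The global right inverse $B$, together with uniqueness of the small solution and a cutoff outside a neighbourhood of $\KK$, already gives a globally defined near-identity $\varPhi$.
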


\begin{proof}[Proof of Proposition~\ref{p-constructionRR}]
The proof is divided into several steps.
In what follows, we abbreviate $$d:= \dd_\XXXX(U,U').$$

\par
\smallskip
\par
\smallskip
{\it Step~1: Local stabilisation\/}.

We start with a local stabilisation result.

\begin{lemma}\label{LS}
Suppose that Hypothesis~{\rm \hyperlink{LCR}{(LCR)}} is fulfilled. Then there are positive numbers $C_*$, $\delta$, and a continuous map
$$
\varPhi:D_\delta\times \KK\to F
$$
such that, for any $(U,U')\in D_\delta$,
\begin{align}
&\sup_{\xi\in \KK}\|\varPhi(U,U',\xi)\|_E+\Lip_\xi\bigl(\varPhi(U,U',\cdot)\bigr)\le C_*d, \label{Phi-bound}\\
&\sup_{\xi\in \KK}\dd_\XXXX\bigl(\SSS(U,\xi),\SSS(U',\xi+\varPhi(U,U',\xi)\bigr) \le \alpha d, \label{Phi-squeezing}
\end{align}
where $\alpha \in (0,1)$ is as in \eqref{distance-EE}.
\end{lemma}
\begin{proof}
Let us denote
$
\widetilde D_\delta:=\{(v,v')\in X\times X:\|v-v'\|_H\le\delta\}
$
and
$$\Delta(v,v',\xi):=S(v,\xi)-S\bigl(v',\xi+\tilde\varPhi(v,v',\xi)\bigr),$$
where $\tilde\varPhi: \widetilde D_\delta\times \KK\to E$ is a map to be defined. 
Using the Taylor formula and the $C^2$-smoothness of~$S$, we write
	\begin{equation}\label{taylor}
	\Delta(v,v',\xi)=(D_vS)(v,\xi)(v-v')-(D_\xi S)(v,\xi)\tilde\varPhi(v,v',\xi)+r(v,v',\xi),
\end{equation}
where the remainder term~$r$ satisfies the inequality
\begin{equation}\label{estimate-r}
	\|r(v,v',\xi)\|_H\le C_1\bigl(\|v'-v\|_H^2+\|\tilde\varPhi(v,v',\xi)\|_V^2\bigr)
\end{equation}
with some constant~$C_1$ not depending on~$v$, $v'$, and~$\xi$.

Since by Assumption~{\rm \hyperlink{LCR}{(LCR)}} the $C^1-$smooth map $D_\xi S(v,\xi) =: A(v,\xi)$
is surjective for all $(v,\xi)$ from a neighbourhood $O$ of $X\times\KK$ in $H\times E$ and  space $H$
is finite-dimensional, then the self-adjoint operator
$
(A A^*)(v,\xi) :H\to H
$
is positive for $(v,\xi)\in O$. Thus for $(x,\xi)$ from a smaller neighbourhood $O_-$ of $X\times\KK$, the operator
$(AA^*)^{-1}(v,\xi)$ exists and is $C^1$-smooth in $(x,\xi)$. So the operator $B(v,\xi) = \big( A^* (AA^*)^{-1}\big)(v,\xi)$
also is a $C^1$-function of $(v,\xi)$, and
\begin{equation}\label{right-inverse}
D_\xi S(v,\xi)B(v,\xi)f=f,
\end{equation}
for any $f\in H$, $(v,\xi)\in O_{-}$.  The operator $B$, constructed above, is the {\it Moore--Penrose right inverse }to
$D_\xi S$. We now set
\begin{equation}\label{Phi-definition}
\tilde\varPhi(v,v',\xi):=B(v,\xi)\,(D_vS)(v,\xi)(v-v').	
\end{equation}
The mapping $\tilde\varPhi:\widetilde D_{\delta}\times \KK\to F, (v,v',\xi)\mapsto\tilde\varPhi$ is $C^1-$smooth (as usual, it means that it extends to a $C^1-$smooth mapping, defined is the vicinity of $\widetilde D_\delta\times \KK$).

We claim that, for small enough $\delta>0$, the map $\tilde\varPhi:\widetilde D_{\delta}\times \KK\to F\subset E$ satisfies the inequalities
 \begin{align}
\sup_{\xi\in \KK}\Bigl(\|\tilde\varPhi(v,v',\xi)\|_E+\Lip_\xi\bigl(\tilde\varPhi(v,v',\xi)\bigr)\Bigr)&\le C'\|v-v'\|_H, \label{tPhi-bound}\\
\sup_{\xi\in \KK}\,\bigl\|S(v,\xi)-S\bigl(v',\xi+\tilde\varPhi(v,v',\xi)\bigr)\bigr\|_H &\le\tfrac12\alpha\|v-v'\|_H, \label{tPhi-squeezing}
\end{align}
where $C'$ is some positive number. Indeed, the validity of~\eqref{tPhi-bound} follows from the compactness of $X\times \KK$ and the $C^1-$smoothness of $\tilde\varPhi$ since $\tilde\varPhi(v,v,\xi)\equiv0$. Furthermore, substituting~$\tilde\varPhi$ into~\eqref{taylor} and using \eqref{right-inverse}, \eqref{estimate-r} and~\eqref{tPhi-bound}, we derive that
$$
\|\Delta(v,v',\xi)\|_H\le C_2\|v-v'\|_H^2\le C_2\delta\|v-v'\|_H.
$$
This leads to~\eqref{tPhi-squeezing} if $\delta$ is small enough.

Now we prove \eqref{Phi-bound} and \eqref{Phi-squeezing}, defining $\varPhi:D_\delta\times \KK\to E$ by the relation
$$
\varPhi(U,U',\xi)=\tilde\varPhi(v,v',\xi), \quad U=(v,\xxi), \quad U'=(v',\xxi').
$$
Recalling the constant $L\ge1$ in the distance~\eqref{metric-X},
we see that~\eqref{Phi-bound} is trivially satisfied with $C_*=C'L^{-1}$. To prove~\eqref{Phi-squeezing}, we use~\eqref{tPhi-bound} and~\eqref{tPhi-squeezing} to write
\begin{align*}
&\dd_\XXXX\bigl(\SSS(U,\xi),\SSS(U',\xi+\varPhi(U,U',\xi)\bigr)\\
=&L\, \|S(v,\xi)-S(v',\xi+\tilde\varPhi(v,v',\xi))\|_H+\alpha\,\dd(\xxi,\xxi')+\|\varPhi(U,U',\xi)\|_E\\
\le &\alpha\,\dd(\xxi,\xxi')+L\,(L^{-1}C'+\tfrac12\alpha)\|v-v'\|_H.
\end{align*}
Up to this point, the number~$L\ge1$ was arbitrary. We now choose it so large that $L^{-1}C'\le \frac{1}{2}\alpha$. Then the above estimate implies~\eqref{Phi-squeezing}.
\end{proof}

\par
\smallskip
{\it Step~2: Estimate for the images of transition probabilities\/}.

\begin{lemma}\label{p-image-measure}
Let Hypothesis~\hyperlink{LCR}{\rm(LCR)} holds, $\delta>0$ and~$\varPhi$ be the constant and map in
Lemma~\ref{LS}, and $$\varPsi(U,U',\eta):=\eta+\varPhi(U,U',\eta).$$ Then there is $\theta\in(0,\delta)$
such that, for any $(U,U') = \big( (v, \xxi), (v', \xxi')\big)
\in D_\theta$,
\begin{equation}\label{TV-estimate}
\bigl\|\PPP_k(U;\cdot)-\varPsi_*(U,U',\PPP_k(U;\cdot))\bigr\|_{\mathrm{var}}
	\le Cd,
\end{equation}
where $C$ stands for a number not depending on~$U$ and~$U'$, and we recall that $d=\dd_\XXXX(U,U')$.
\end{lemma}

\begin{proof}
Recalling \eqref{kp}, we derive from Hypothesis~\hyperlink{LCR}{(LCR)} that for $U =(v, \xxi)\in \XXXX$
 measure $\PPP_k(U;\cdot)$ admits the  disintegration
\begin{equation}\label{4}
\PPP_k (U;\dd y)=\PPP_k (U;\dd y_F,\dd y_{F}^\bot)=Q_{kF}^\bot( \xxi ;\dd y_{F}^\bot) Q_{kF}( \xxi ,y_{F}^\bot;\dd y_F),
\end{equation}
where for any $\xxi \in \bKK$ and $y_{F}^\bot\in F^\bot$ relation \eqref{lcr} holds for measure  $Q_{kF}( \xxi ,y_{F}^\bot;\cdot)$ 
with some Lipschitz function $\rho_{k F}:\bKK\times E\to\R_+$. 

We claim that Proposition 6.1 from the appendix in \cite{KS-2025} applies to the measures $\PPP_k (U;\cdot)$ and
 $\Psi_*(U,U';\PPP_k (U;\cdot))$. Indeed, as in \cite{KS-2025}, the map $\eta\mapsto \Phi(U,U',\eta)$ is ranged in 
 a finite-dimensional subspace $F\subset E$, relations (6.1) in \cite{KS-2025} follow from Lemma~\ref{LS}, 
and Hypothesis~(DLP) of \cite{KS-2025} follows from Hypothesis~\hyperlink{LCR}{(LCR)}. Thus Proposition~6.1
 applies to the two measures and implies \eqref{TV-estimate}.
\end{proof}

\par
\smallskip
{\it Step~3: Construction of operators~$\RR_k$ and~$\RR_k'$}.

Let us fix any $(U,U') = \big( (v, \xxi), (v', \xxi')\big) \in D_\theta$
 and consider two pairs of random variables in $\KK$,
$\bigl(\zeta_{k+1}^{\xxi},~\varPsi(U,U',\zeta_{k+1}^{\xxi})\bigr)$ and $\bigl(\varPsi(U,U',\zeta_{k+1}^{\xxi}),~\zeta_{k+1}^{\xxi'}\bigr)$,
where we recall that $\DD(\zeta_{k+1}^{\xxi})=\PPP_k(U, \cdot)$, $\DD(\zeta_{k+1}^{\xxi'})=\PPP_k(U', \cdot)$.
We denote by~$\lambda(U,U')$ the law of the first pair and define~$\lambda'(U,U')$ to be the law of a maximal coupling for the second pair, i.e., for the pair
of measures
$$
\bigl(\,\DD(\varPsi(U,U',\zeta_{k+1}^{\xxi}))=\varPsi_*(U,U', \PPP_k(U;\cdot)), \ \DD(\zeta_{k+1}^{\xxi'})= \PPP_k(U';\cdot)\,\bigr).
$$
In view of Theorem~1.2.28 in~\cite{KS-book}, we can assume that~$\lambda$ and~$\lambda'$ are transition probabilities from~$D_\theta$ to $E\times E$. Then $\Pi_{2*}\lambda(U,U')=\Pi_{1*}\lambda'(U,U')$ for any $(U,U')\in D_\theta$, where $\Pi_i:E\times E\to E$, $i=1,2$ denotes the projection to the $i^\text{\rm{th}}$ component. In view of a measurable version of the gluing lemma (see~\cite[Chapter~1]{villani2009} and~\cite[Corollary~3.7]{BM-2020}), there exists a triplet of $E$-valued random variables
$\tilde{\zeta}(U,U'),\xi(U,U'),\tilde\zeta'(U,U')$, defined on an independent copy of
 probability space $\Omega$, such that the maps $\tilde\zeta,\xi,\tilde\zeta':D_\theta\times\Omega\to E$ are measurable, and for any $(U,U')\in D_\theta$, we have
\begin{equation*}\label{noise-coupling}
	\DD\bigl(\tilde\zeta(U,U'),\xi(U,U')\bigr)=\lambda(U,U'), \quad
	\DD\bigl(\xi(U,U'),\tilde\zeta'(U,U')\bigr)=\lambda'(U,U').
\end{equation*}
Next, we extend $\tilde\zeta(U,U')$ and $\tilde\zeta'(U,U')$ from $(U,U')\in D_\theta$  to random fields on $\XXXX\times \XXXX$ in such a way that
\begin{equation}\label{exten}
\text{$\tilde\zeta(U,U')$ and $\tilde\zeta'(U,U') $ are independent if $(U,U')\in D_\theta^c$},	
\end{equation}
and their laws are $\PPP_{k}(U; \cdot)$ and $\PPP_{k}(U';\cdot)$, respectively. Finally, for any $U,U'\in\XXXX$, we set
\begin{equation}\label{RR-definition}
\RR_{k}(U,U',\omega):=\SSS\bigl(U,\tilde\zeta(U,U',\omega)\bigr), \quad
\RR_{k}'(U,U',\omega):=\SSS\bigl(U',\tilde\zeta'(U,U',\omega)\bigr).
\end{equation}
Then $(\RR_{k}, \RR_{k}')$ is a coupling for the measures~$P_{k}(U;\cdot)$ and~$P_{k}(U';\cdot)$.

\smallskip
{\it Step~4: Properties of~$\RR_{k}$ and~$\RR_{k}'$\/}.

We claim that the maps defined by~\eqref{RR-definition} satisfy the properties of \hyperlink{independence}{Independence} and \hyperlink{squeezing}{Squeezing}. Indeed, the independence of $\RR_{k}$ and~$\RR_{k}'$  for $(U,U')\in D_\theta^c$ follows from~\eqref{exten}, so we only need to establish~\eqref{P-RR} for $(U,U')\in D_\theta$.

Since~$\DD(\xi,\tilde\zeta')=\lambda'(U,U')$ is the law of a maximal coupling for $\DD(\varPsi(U,U',\zeta^{\xxi}))$
and~$\DD(\zeta^{\xxi'})$, it follows from~\eqref{TV-estimate} and Hypothesis~{\rm \hyperlink{SF}{(SF)}} that
\begin{equation}\label{maximal-coupling}
\begin{split}
&\IP\{\xi(U,U')\ne\tilde\zeta'(U,U')\}
=\|\DD(\varPsi(U,U',\zeta^{\xxi}))-\DD(\zeta^{\xxi'})\|_{var}\\
\leq&\|\DD(\varPsi(U,U',\zeta^{\xxi})-\DD(\zeta^{\xxi})\|_{var}
+\|\DD(\zeta^{\xxi})-\DD(\zeta^{\xxi'})\|_{var}
\le C\,{d}.
\end{split}
\end{equation}

On the other hand, since the law of $(\tilde\zeta,\xi)$ equals to $\lambda$ and coincides with that of $(\zeta^{\xxi},\varPsi(U,U',\zeta^{\xxi}))$, the definition of~$\varPsi$ and inequality~\eqref{Phi-squeezing} imply that, with probability~$1$,
$$
\dd_\XXXX\bigl(\SSS(U,\tilde\zeta(U,U')),\SSS(U',\xi(U,U'))\bigr) \le q\,{d}.
$$
Therefore, if $\omega\in\Omega$ is such that  $\xi(U,U')=\tilde\zeta'(U,U')$, then
$$
\dd_\XXXX\bigl(\SSS(U,\tilde\zeta(U,U')),\SSS(U',\tilde\zeta'(U,U'))\bigr) \le q\,{d}.
$$
Combining this with~\eqref{maximal-coupling} and~\eqref{RR-definition}, we arrive at relation \eqref{P-RR} with $N=C$.
\end{proof}

\subsection{Construction of a coupling for the laws of a pair of solutions of \eqref{RDSL}}\label{NewP}
In Proposition \ref{p-constructionRR}, on probability space $\Omega$, for any $k\in \Z$
 we constructed measurable maps $\RR_{k},\RR_{k}':\XXXX\times\XXXX\times\Omega\to\XXXX$ such that $\RR_{k}=\RR_{k}(V,V';\omega)$ and $\RR_{k}'=\RR_{k}^\prime(V,V';\omega)$ form a coupling for measures
$(P_{k}(V;\cdot), P_{k}(V';\cdot))$, for any $V,V'\in\XXXX$. I.e.,
\begin{equation}\label{couple1}
\DD(\RR_{k})=P_{k}(V;\cdot), \quad
\DD(\RR_{k}')=P_{k}(V';\cdot).
\end{equation}
Next, we denote by~$(\widehat\Omega,\widehat\FF,\widehat\IP,\widehat\FF_k)$ the suitable filtered probability space as in Section~2, constructed from the space $\Omega$  and formed by points $\oomega=(\omega_k,k\in \Z)$ with $\omega_k\in\Omega$, and denote, as before, $\oomega_k=(\omega_l,l\le k)$.
Now, let us take any two $\hat\FF_\tau-$measurable random variables $U,U'\in \XXXX$ and consider corresponding solutions 
$\{U_k(U)\}_{k\ge \tau}$ and $\{U_k(U')\}_{k\ge \tau}$ of \eqref{RDSL}. Our goal in this section is to use iterations of operators
 $\RR_{k}$ and $\RR_{k}'$ to construct two special solutions $\{{}_\tau\varPhi_k\}_{k\ge\tau}$ and 
 $\{{}_\tau\varPhi'_k\}_{k\ge\tau}$ of \eqref{RDSL} with the initial data $U$ and $U'$, defined on $\widehat\Omega$, 
 and examine their properties. We define processes $\{({}_\tau\varPhi_k,{}_\tau\varPhi'_k)\}_{k\ge\tau}$ by the formulas, where $k\ge \tau+1$
\begin{equation}\label{Phi-Phi'}
\left.
\begin{aligned}
{}_\tau\varPhi_\tau(U,U';\oomega_\tau)&=U, \quad {}_\tau\varPhi_\tau'(U,U';\oomega_\tau)=U', \\
{}_\tau\varPhi_k(U,U';\oomega_k)&=\RR_{k}\bigl({}_\tau\varPhi_{k-1}(U,U';\oomega_{k-1}),{}_\tau\varPhi_{k-1}'(U,U';\oomega_{k-1}),\omega_k\bigr),\\
{}_\tau\varPhi_k'(U,U';\oomega_k)&=\RR_{k}'\bigl({}_\tau\varPhi_{k-1}(U,U';\oomega_{k-1}),{}_\tau\varPhi_{k-1}'(U,U';\oomega_{k-1}),\omega_k\bigr).
\end{aligned}
\right\}	
\end{equation}
The construction of $\RR_{k}, \RR_{k}'$ (see \eqref{RR-definition}) and Proposition \ref{p-constructionRR} imply that $\{{}_\tau\varPhi_k(U,U')\}_{k\ge\tau}$ and $\{{}_\tau\varPhi_k'(U,U')\}_{k\ge\tau}$ are solutions of~\eqref{RDSL} with some suitable choices of the random fields $_\tau \zeta_k$ and $_\tau \zeta_k'$, $k\ge \tau$. So
\begin{equation}\label{laws}
\begin{aligned}
\DD\bigl(\{{}_\tau\varPhi_k(U,U')\}_{k\ge\tau}\bigr) &= \DD\bigl(\{ U_k(U)\}_{k\ge\tau}\bigr),\\
\DD\bigl(\{{}_\tau\varPhi_k'(U,U')\}_{k\ge\tau}\bigr) &= \DD\bigl(\{ U_k(U')\}_{k\ge\tau}\bigr).
\end{aligned}
\end{equation}
Besides, it is easy to see that
\begin{equation}\label{Mark}
\begin{gathered}
\text{\sl the family $\{({}_\tau\varPhi_k(U,U'), {}_\tau\varPhi'_k(U,U'))\}_{k\ge\tau}$ is a trajectory of a non-homogenous}\\
\text{\sl Markov  process in~$\XXXX\times \XXXX$ with respect to the filtration~$\{\widehat{\FF}_k\}$}.
\end{gathered}
\end{equation}

The following results establish two key properties of this process.

\begin{proposition}\label{pc1}
Decreasing, if needed, the number $\theta>0$ in Proposition~\ref{p-constructionRR} we achieve
 that, for any $(U,U')\in D_\theta$, the following inequality holds for the constructed trajectory of Markov process~\eqref{Mark}{\rm:}
\begin{equation}\label{infinite-coupling}
	\widehat\IP\bigl\{\dd_\XXXX\bigl({}_\tau\varPhi_{\tau+k}(U,U'),{}_\tau\varPhi_{\tau+k}'(U,U')\bigr)
	\le q^{k}d\mbox{ for all  $k\ge0$}\bigr)\bigr\}
	\ge 1-N_1d,~~d=\dd_\XXXX(U,U').
\end{equation}
Here $N_1=N(1-q)^{-1}$ and $N>0$, $q<1$  are the constants from Proposition \ref{p-constructionRR}.
\end{proposition}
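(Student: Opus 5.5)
The plan is to iterate the Squeezing property of Proposition~\ref{p-constructionRR} along the Markov chain~\eqref{Mark} and sum the failure probabilities as a geometric series. Write $d_k:=\dd_\XXXX({}_\tau\varPhi_{\tau+k},{}_\tau\varPhi'_{\tau+k})$, so $d_0=d$, and define the events
$$
G_k:=\{d_j\le q^j d \ \text{ for } j=0,\dots,k\}.
$$
The event in \eqref{infinite-coupling} is $\bigcap_k G_k$, and the sets $G_k$ decrease. By continuity of measure it therefore suffices to prove $\widehat\IP(G_k)\ge 1-N_1 d$ for every $k$. Since $G_0$ has full measure, this follows from
$$
\widehat\IP(G_{k-1}\setminus G_k)\le N q^{k-1}d \quad\text{for } k\ge1,
$$
because $\sum_{k\ge1}Nq^{k-1}d=N(1-q)^{-1}d=N_1d$.

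To get this one-step bound, condition on $\widehat\FF_{\tau+k-1}$. On $G_{k-1}$ we have $d_{k-1}\le q^{k-1}d\le d\le\theta$, so the pair $({}_\tau\varPhi_{\tau+k-1},{}_\tau\varPhi'_{\tau+k-1})$ lies in $D_\theta$. The next pair is obtained by applying $(\RR_{\tau+k},\RR'_{\tau+k})$ with the fresh, independent variable $\omega_{\tau+k}$. By \eqref{P-RR}, with conditional probability at least $1-Nd_{k-1}\ge 1-Nq^{k-1}d$ we get $d_k\le q\,d_{k-1}\le q^k d$. Formally, we write $\widehat\IP(G_{k-1}\setminus G_k)=\E[\mathbf 1_{G_{k-1}}\,\widehat\IP(d_k>q^k d\mid\widehat\FF_{\tau+k-1})]$. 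We then use Fubini over $\omega_{\tau+k}$, which is legitimate because the pair at time $\tau+k-1$ depends only on $\oomega_{\tau+k-1}$, and we use measurability of $\RR_k,\RR_k'$. Inside the expectation, on $G_{k-1}$, the event $\{d_k>q^kd\}$ is contained in $\{d_k>q d_{k-1}\}$, whose conditional probability is at most $Nd_{k-1}$.

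The role of shrinking $\theta$ needs some care. Proposition~\ref{p-constructionRR} gives \eqref{P-RR} for every pair in $D_\theta$, and $d_{k-1}\le d\le\theta$ keeps the pair in $D_\theta$ at every step. So the induction itself needs no further shrinking. One may still decrease $\theta$ so that $N_1\theta<1$ and the bound is nontrivial; this is harmless, since making $\theta$ smaller preserves \eqref{const} and the Independence and Squeezing properties remain valid on the smaller set (the operators are simply constructed with the new $\theta$ in Step~3).

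I expect the main obstacle to be the measure-theoretic justification of the conditioning step. It relies on the fact that the coupling operators at step $\tau+k$ use only $\omega_{\tau+k}$, which is independent of $\widehat\FF_{\tau+k-1}$. This is exactly the structure built into \eqref{Phi-Phi'}, and once it is in place the rest is a direct summation.
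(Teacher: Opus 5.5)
Your proof is correct and follows the paper's argument. You iterate the Squeezing property along the chain \eqref{Mark}, observe that on the good event the pair stays in $D_\theta$ because $q^{k-1}d\le\theta$, and sum the one-step failure probabilities $Nq^{k-1}d$ into $N_1 d$. The only cosmetic difference is bookkeeping: the paper uses intersections $\widetilde G_k$ of one-step contraction events and a multiplicative induction $\widehat\IP(\widetilde G_{k+1})\ge(1-Nq^kd)\,\widehat\IP(\widetilde G_k)$, whereas you bound $\widehat\IP(G_{k-1}\setminus G_k)$ additively for the slightly larger events $\{d_j\le q^jd,\ j\le k\}$, whose intersection is exactly the event in \eqref{infinite-coupling}.
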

\begin{proof}
Given an integer $k\ge 1$, we define the event
$$
G_k=\Bigl\{\dd_\XXXX\bigl({}_\tau\varPhi_{\tau+k}(U,U'),{}_\tau\varPhi_{\tau+k}'(U,U')\bigr)\le q\,\dd_\XXXX\bigl({}_\tau\varPhi_{\tau+k-1}(U,U'),{}_\tau\varPhi_{\tau+k-1}'(U,U')\bigr)\Bigr\}.
$$
We first show that, for any $(U,U')\in D_\theta$,
\begin{equation}\label{PGk}
	\widehat\IP\bigl(\,\widetilde  G_k\bigr)
	\ge 1-N{d}\sum_{l=0}^{k-1}q^l=:\delta_k, \quad \text{where}\quad
	 \widetilde  G_k = \bigcap_{l=1}^k G_l.
\end{equation}
 Since ${}_\tau\varPhi_{\tau+k}$ and ${}_\tau\varPhi'_{\tau+k}$ are $\widehat{\FF}_{\tau+k}-$measurable, then $G_k\in \widehat{\FF}_{\tau+k}.$

To prove~\eqref{PGk}, we  argue by induction. For $k=1$ the inequality coincides with~\eqref{P-RR}.

Assuming that~\eqref{PGk} holds for $k$, we now establish it for $k+1$. By \eqref{Phi-Phi'},
\eqref{Mark} and  the Markov property, we have
$$
\widehat\IP\bigl\{G_{k+1}\,|\,\widehat\FF_k\bigr\}
=\widehat\IP\Bigl\{\dd_\XXXX\bigl(\RR_{\tau+k}(V,V'),\RR_{\tau+k}'(V,V')\bigr
)\le q\,\dd_\XXXX(V,V')\Bigr\},
$$
where $V(\oomega_{\tau+k})={}_\tau\varPhi_{\tau+k}(U,U')$ and $V'(\oomega_{\tau+k})={}_\tau\varPhi_{\tau+k}'(U,U')$. Now note that if $\oomega_{\tau+k}\in \widetilde G_k$, then
$$\dd_\XXXX({}_\tau\varPhi_{\tau+k}(U,U',\oomega_{\tau+k}),{}_\tau\varPhi_{\tau+k}'(U,U',\oomega_{\tau+k}))\le q^k{d}\le \theta.$$
So according to \eqref{P-RR}, we have
$$\IP^{\omega_{\tau+k+1}}\Bigl\{\dd_\XXXX\bigl(\RR_{\tau+k+1}(V,V',\omega_{\tau+k+1}),\RR_{\tau+k+1}'(V,V',\omega_{\tau+k+1})\bigr
)\le q\,\dd_\XXXX(V,V')\Bigr\}\geq 1-Nq^k{d}.$$
Using the induction hypothesis, we thus obtain
\begin{align*}
\widehat\IP\bigl(\widetilde G_{k+1}\bigr)
&=\IP^{\omega_{\tau+k+1}}\IP^{\oomega_{\tau+k}}\bigl(\widetilde G_{k+1}\bigr)
\ge \bigl(1-Nq^k{d}\bigr)\,\widehat\IP\bigl(\widetilde G_k\bigr)\\
&\ge \bigl(1-Nq^k{d}\bigr)\,\delta_k>\delta_{k+1}.
\end{align*}
This completes the induction step and proves~\eqref{PGk}.

With the help of \eqref{PGk}, the probability of the intersection~$\widetilde G$ of all $G_k$ can be minorised by
$$
1-Nd\sum_{l=0}^\infty q^l=1-N_1 d.
$$
This implies~\eqref{infinite-coupling} since~$\widetilde G$ is a subset of  the event in the left-hand side of~\eqref{infinite-coupling}.
\end{proof}

\begin{proposition}\label{pc2}
For any $\delta>0$, there exist a number $p>0$ and an integer $l\ge1$, both independent from $\tau$, such that for any $(U,U')\in\XXXX\times\XXXX$,
\begin{equation}\label{to-coupling}
	\widehat\IP\bigl\{\dd_\XXXX\bigl({}_\tau\varPhi_{\tau+l}(U,U'),{}_\tau\varPhi_{\tau+l}'(U,U')\bigr)
	\le\delta\bigr\}\ge p.
\end{equation}
\end{proposition}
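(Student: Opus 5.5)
The plan is to use recurrence to zero of the noise, combined with global dissipation of the free system, to bring both components of the coupled pair close to a common point with uniformly positive probability. The coupling $(\RR_k,\RR_k')$ is not independent on $D_\theta$, so I will avoid relying on independence. Instead I will use that each marginal process $\{{}_\tau\varPhi_k\}$, $\{{}_\tau\varPhi'_k\}$ is a solution of \eqref{RDSL}, whose noise components are the last entries of the $\bKK$-coordinates. Write ${}_\tau\varPhi_k=(v_k,\xxi_k)$ and ${}_\tau\varPhi'_k=(v'_k,\xxi'_k)$, with last noise entries $\xi_k,\xi'_k$.

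Given $\delta>0$, I first choose integers and a small $\e>0$. Take $m$ so large that $\sum_{j\ge m}\alpha^j\,\mathrm{diam}\,\KK\le\delta/4$; then the $\dd$-distance of two pasts is at most $\delta/4$ plus $\sum_{j<m}\alpha^j\|\xi_{k-j}-\xi'_{k-j}\|_E$. Using \hyperlink{GD}{(GD)}, the boundedness of $X$, and iteration, choose $n_0$ (a multiple of the $k$ in (GD)) with $\|S_{n_0}(u;\mathbf 0_{n_0})\|_H\le \delta/(8L)$ for all $u\in X$. By continuity of $S$ and compactness of $X\times\KK$, there is $\e>0$ such that any trajectory with all $\|\eta_j\|<\e$ for $n_0$ steps satisfies $\|S_{n_0}(u;\vec\eta)\|_H\le\delta/(4L)$ uniformly in $u\in X$. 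Finally set $n=\max(n_0,m)$ and shrink $\e$ so that $\sum_{j<m}\alpha^j\cdot 2\e\le\delta/4$.

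Next I apply \hyperlink{RZ}{(RZ)} with this $n$ and $\e$ to get $s$, and set $l=s+n$. Consider the event $A$ that $\|\xi_k\|<\e$ for $k=\tau+s+1,\dots,\tau+s+n$, and the analogous event $A'$ for $\xi'_k$. On $A\cap A'$ we get $\|v_{\tau+l}\|,\|v'_{\tau+l}\|\le\delta/(4L)$, and the last $m$ noise entries differ by less than $2\e$. Hence $\dd_\XXXX\le L\cdot\delta/(2L)+\delta/4+\delta/4=\delta$.

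The main obstacle is bounding $\widehat\IP(A\cap A')$ from below, since the two marginals are coupled in a complicated way. (RZ) applied to each marginal gives $\widehat\IP(A),\widehat\IP(A')\ge p_0>0$, with $p_0$ uniform in $\tau$ and in the initial pasts; this uses that the conditional law of the marginal given $\widehat\FF_\tau$ is the law generated by the $Q_j$ from the initial past, by \eqref{laws} and Remark~\ref{imp}. But $p_0$ alone need not give a positive intersection. I would handle this stepwise through the Markov structure \eqref{Mark}. At each step $k$, the conditional law of $\RR_k$ given $\widehat\FF_{k-1}$ is $P_{k-1}$. In the independent regime $D_\theta^c$ the conditional probability of both small events factorizes. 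In the regime $D_\theta$, the construction uses $\tilde\zeta'$ close to $\tilde\zeta+\varPhi$ with high probability and $\|\varPhi\|\le C_*\theta$, so smallness of one noise transfers to the other after shrinking $\e$ to $\e/2$ and choosing $\theta$ small.

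Cleaner alternatives would be to modify the coupling off $D_\theta$ so that the pair is independent over the block of $l$ steps, or to note that the probability in \eqref{to-coupling} only needs a lower bound and prove it for an auxiliary independent coupling. If the stepwise factorization fails, my fallback is to prove that the events "both noises small over $n$ steps" have conditional probability bounded below by the product of the (RZ) lower bounds. For this I would use a conditional-on-$\widehat\FF_{\tau+s}$ version of (RZ) with $s$ replaced by $0$, obtained by running (RZ) inductively in single steps. This yields $p=p_0^2$ (or a power thereof), independent of $\tau$, $U$ and $U'$.
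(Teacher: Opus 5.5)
There is a genuine gap, and it sits exactly at the step you flag as the main obstacle. Your single-trajectory part is fine and is essentially the paper's Step~1: choosing $m$, $n_0$ and $\e$ from (GD), continuity of $S$ and the tail of $\dd$, then applying (RZ) to one marginal.

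What is missing is a lower bound for $\widehat\IP(A\cap A')$ under the \emph{actual} coupling \eqref{Phi-Phi'}, and none of your proposed fixes supplies it.

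\begin{itemize}
\item The pair may enter $D_\theta$ at a random time before or inside the window. From then on the noises come from the correlated squeezing coupling, not independently.
\item (RZ) only controls a whole window, conditionally on the past $s$ steps before that window starts. It gives no lower bound for the next step(s) given an arbitrary past. So there is no stepwise factorisation, and no ``(RZ) with $s$ replaced by $0$'' that can be run inductively.
\item Transferring smallness inside $D_\theta$ also fails. The mismatch event $\{\xi\neq\tilde\zeta'\}$ has probability up to $Nd\le N\theta$ per step. This is not small compared with the (RZ) bound $p_0$, and it may concentrate exactly where $\tilde\zeta$ is small. Moreover $\|\varPhi\|\le C_*\theta$ need not be below $\e(\delta)<\delta$: $\theta$ and the coupling are fixed before $\delta$ is chosen.
\item Modifying the coupling changes the process ${}_\tau\varPhi$ that the statement is about.
\item A bound proved for an auxiliary independent coupling says nothing about the actual one unless you link the two.
\end{itemize}

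The missing idea is to aim at $D_\theta$ rather than at $\delta$ directly, and then invoke Proposition~\ref{pc1}.
\begin{enumerate}
\item Run an auxiliary pair with always-independent fields that coincides with $({}_\tau\varPhi,{}_\tau\varPhi')$ up to the first entrance time $\sigma$ into $D_\theta$.
\item For the auxiliary pair, independence together with your Step~1 gives probability $\ge p_{\theta/2}^2$ that both components lie within $\theta/2$ of $(0,\mathbf 0)$ at time $\tau+m$. Hence $\sigma\le\tau+m$ with at least that probability for the actual pair.
\item Shrink $\theta$, independently of $\delta$, so that $N_1\theta\le\frac12$. Then Proposition~\ref{pc1} and the strong Markov property at $\sigma$ give $\dd_\XXXX\le q^j\theta$ for all $j\ge1$ after $\sigma$, with conditional probability $\ge\frac12$.
\item Take $l=m+k_*$ with $q^{k_*}\theta\le\delta$. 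This yields \eqref{to-coupling} with $p=\frac12 p_{\theta/2}^2$, independent of $\tau$, $U$ and $U'$.
\end{enumerate}
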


\begin{proof}
The proof is divided into two steps.

Step~1: We start with an
 assertion concerning  single trajectory $\{U_k(U), k\ge\tau\}$, $U=(u,\xxi)\in X\times \bKK$.
Namely, we will show that for any $\delta>0$, there exists $\rho_{\delta}>0$ and an integer $m_{\delta}\geq1$, both independent from $\tau$,
such that
\begin{equation}\label{transition-U-Udelta}
	\widehat\IP\bigl\{d_\XXXX(U_{\tau+m_\delta}(U),(0,\mathbf0))<\delta\bigr\}\ge p_\delta\quad\mbox{for any initial point $U\in\XXXX$},
\end{equation}
where $\mathbf0$ is the zero-sequence in $E^{\Z_-}$.

Indeed, in view of the definition of distance $d$ in $\bKK$, for $\delta_1=\frac{\delta}{L+1}$ there exists $l_1 \geq 1$ and $r_1>0$ such that
\begin{equation}\label{13}
d(\xxi,\mathbf0)<\delta_1~~{\rm{if}}~~|\xi_{j}|<r_1,~\forall j\geq -l_1.
\end{equation}
By~\hyperlink{GD}{(GD)} and the continuity of $S$, there exist $l_2\geq 1$ and $r_2>0$ such that
\begin{equation}\label{14}
\|S_{l_2}(u;\eta_1,\cdots,\eta_{l_2})\|_{H}<\delta_1~~{\rm{if}}~~|\eta_{j}|<r_2,~\forall 1\leq j\leq l_2,
\end{equation}
for each $u\in X.$ Without loss of generality, we may assume that $l_2\geq l_1$ and $r_2\le r_1$.
Now, we apply~\hyperlink{RZ}{(RZ)} with $l=\tau, n=l_2$ and $\delta=r_2$.
For $s$ as in \eqref{RZ}, consider
the piece of trajectory $\{U_{\tau+j}(U)=(u_{\tau+j},\xxi_{\tau+j}),0\leq j\leq s+n\}$, and let $Q$ be the event
$$Q=\{\hat{\omega} : |\xi_{j}|<r_2~\forall s+1\leq j\leq s+n\}.$$
By~\hyperlink{RZ}{(RZ)}, $\widehat\IP(Q)\geq p_r>0,$ and by \eqref{13}, $d(\xxi_{\tau+s+n},\mathbf0)<\delta_{1}$.
Using \eqref{14}, we derive that $\|u_{\tau+s+n}\|_H<\delta_{1}$
for each $\hat{\omega}\in Q$. Thus,
$$d_\XXXX(U_{\tau+s+n}(U),(0,\mathbf0))<\delta_{1}+L\delta_{1}=\delta,~\forall\hat{\omega}\in Q.$$
This proves \eqref{transition-U-Udelta} with $m_\delta=s+n$ and $p_\delta=p_r$.

\smallskip
{\it Step~2: Proof of \eqref{to-coupling}\/}.
Let us note that all our constructions and conclusions above remain true if we replace the number~$\theta$, defining the set~$D_\theta^c$ in Proposition \ref{p-constructionRR},
 with any smaller positive constant, depending only on the system~\eqref{RDSL}. Thus, without loss of generality, we shall assume in what follows that
\begin{equation}\label{modif}
	\theta\le\frac{1}{2N_1},
\end{equation}
where $N_1>0$ is the number in~\eqref{infinite-coupling}. In this case, the right-hand side of~\eqref{infinite-coupling} is minorised by~$1/2$ if $d=\dd_\XXXX(U,U')\le \theta$.

In what follows, we  assume without loss of generality that~$\delta<\theta$.
Let $k_*>0$ be the smallest integer such that $\theta q^{k_*} \le \delta$. We claim that~\eqref{to-coupling} holds with $l=m_{\delta/2}+k_*$, where $m_{\delta/2}$ is the integer entering relation~\eqref{transition-U-Udelta}, where the radius~$\delta$ of the ball is replaced with~$\delta/2$.

Abbreviating ${}_\tau\Phi_k(U, U') = {}_\tau\Phi_k$, ${}_\tau\Phi'_k(U, U') = {}_\tau\Phi'_k$ and
 $m=m_{\delta/2}$, for $l\ge \tau,$
we set $\zeta_{l}=({}_\tau\varPhi_{l},{}_\tau\varPhi_{l}')\in\XXXX\times\XXXX$, $d(\zeta_{l})=\dd_\XXXX({}_\tau\varPhi_{l},{}_\tau\varPhi_{l}')$ and define the stopping times
$$
\sigma=\sigma(U,U')=\min\bigl\{k\ge\tau:d(\zeta_{k})\le\theta\bigr\}, \quad \sigma^m=\sigma\wedge(\tau+m+1).
$$
Without loss of generality, we may assume that  construction~\eqref{Phi-Phi'} is such that 
when $(U,U')\in D_\theta^{c}$, we take random variables $\RR_{k}(U,U',\omega_k)$, $\RR_{k}'(U,U',\omega_k)$, $k\ge\tau$, from some fixed collection of pairs of independent random fields
$$
(\RR_{k}(\cdot,\omega_k), \RR_{k}'(\cdot,\omega_k))_{k\ge \tau}.
$$
Let us now define another processes $\{_\tau\hat\varPhi_{k}, k\ge \tau\}$ and $\{_\tau\hat\varPhi_{k}', k\ge \tau\}$
 by relation~\eqref{Phi-Phi'}, where always the random fields~$\RR_{k}$ and~$\RR_{k}'$ are independent and are taken 
 from the collection above.
We set $\hat\zeta_{l}=(_\tau\hat\varPhi_{l},{}_\tau\hat\varPhi_{l}')$ and define $d(\hat\zeta_{l})$, $\hat\sigma$, $\hat\sigma^m$ similarly to $d(\zeta_{l})$, $\sigma$, $\sigma^m$. Note that
$$
\sigma^m=\hat\sigma^m\quad\mbox{and}\quad \zeta_{l}=\hat\zeta_{l}\quad\mbox{for $l\le \sigma^m$}.
$$
In view of~\eqref{transition-U-Udelta} and the independence of processes $(_\tau\hat\varPhi_{l},{}_\tau\hat\varPhi_{l}')$,
\begin{equation}\label{Z1}
	\widehat\IP\{\hat\sigma^m=\tau+m+1\}\le \widehat\IP\{d(\hat\zeta_{\tau+m})>\theta\}
	\le 1-(p_{\theta/2})^2.
\end{equation}
Let us consider event $A_m=\{\dd_\XXXX({}_{\sigma^m}\varPhi_{{\sigma^m}+k}(\zeta_{\tau+\sigma^m}),{}_{\sigma^m}\varPhi_{{\sigma^m}+k}'(\zeta_{\tau+\sigma^m}))\le q^k\theta\mbox{ for all }k\ge1\}$. By the strong Markov property, applied to process~$\{\zeta_l\}$, it follows that
\begin{align}
\widehat\IP\{d(\zeta_{\sigma^m+k})\le q^k\theta\mbox{ for }k\ge 1\}
&=\widehat\IP(A_m)\ge \widehat\IP\bigl(\{\sigma^m\le \tau+m\}\cap A_m\bigr)\notag\\
&=\E\bigl({\mathbf1}_{\{\sigma^m\le \tau+m\}}\,\IP\{A_m\,|\,_\tau\widehat\FF_{\tau+\sigma^m}\}\bigr).
\label{lower-bound-dz}
\end{align}
But for every~$\omega\in \{\sigma^m\le \tau+m\}$, we have $\sigma^m=\sigma$ and $d(\zeta_{\tau+\sigma^m})\le\theta$. Thus for every such~$\omega$ it holds that $\IP\{A_m\,|\,_\tau\widehat\FF_{\tau+\sigma^m}\}\ge\frac12$ in view of~\eqref{infinite-coupling} and~\eqref{modif}. Since $\sigma^m=\hat\sigma^m$, then by~\eqref{Z1} the right-most term in~\eqref{lower-bound-dz} is no less than~$\frac12(p_{\theta/2})^2$. As the event in~\eqref{to-coupling} contains the event~$\{d(\zeta_{\sigma^m+k})\le q^k\theta\mbox{ for }k\ge1\}$ if $l=m+1$, then~\eqref{to-coupling} is proved with $l=m+1$ and $p=\frac12(p_{\theta/2})^2$.
\end{proof}

\subsection{Proof of Theorem~\ref{mixingL}}\label{PML}
It is well known (e.g. see \cite[Section 6.4]{kuksin2006}) that in view of the
compactness of the space $\XXXX$,
the exponential mixing for system \eqref{RDSL} follows from estimates \eqref{evL}. To prove \eqref{evL} we
use the method of Kantorovich functional (see \cite{kuksin2006} and \cite[Section~3.1.1]{KS-book}).
We define
\begin{equation*}
F(U,U')=
\begin{cases}
\dd_\XXXX(U,U'),~~\dd_\XXXX(U,U')\leq\vt,\\
~~R,~~~~~~~~~~\dd_\XXXX(U,U')>\vt,
\end{cases}
\end{equation*}
where $R=2\vt$ and $\vt$ is a small constant specified later.
We see that $F:\XXXX\times\XXXX\to\R_+$ is a measurable symmetric function minorised by~$\dd_\XXXX \wedge \vt$. We define the {\it Kantorovich functional associated with~$F$\/} by the formula
$$
K_F:\PP(\XXXX)\times \PP(\XXXX)\to\R_+, \quad
K_F(\mu,\mu')=\inf_\MMM\int_{\XXXX\times\XXXX}F(V,V')\,\MMM(\dd V,\dd V'),
$$
where the infimum is taken over all  couplings $\MMM $  of $\mu$ and $\mu'$, i.e., over all measures $\MMM \in\PP(\XXXX\times\XXXX)$  whose marginals coincide with~$\mu$ and~$\mu'$. Then for any measures $\mu,\mu'\in\PP(\XXXX)$,
\begin{equation}\label{KKant}
\|\mu-\mu'\|_L^*\le CK_F(\mu,\mu'),
\end{equation}
where $C>0$ depends only on $\vt$ (see \cite[Lemma~5.10]{kuksin2006} and \cite[Lemma~1.2.31]{KS-book}).

Let us take any $\hat\FF_\tau-$measurable random variables $U,U'\in \XXXX,$ distributed as
the initial data $V$ and $V'$,  consider solutions $U_k={}_\tau U_k(U), U_k'={}_\tau U_k(U')$ and the corresponding trajectory of \eqref{Mark}.
By \eqref{Lip_Kant} and \eqref{K_R} one can choose $U$ and $U'$ in such a way that
\begin{equation}\label{KB}
\E F(U,U')\leq C\|\DD(U)-\DD(U')\|_L^*,
\end{equation}
where $C>0$ is a constant. It follows from Proposition \ref{pc2} with $\delta=\vt$ that there exist $p>0$ and $l\in \N$ such that
\begin{equation*}
\IP\bigl\{\dd_\XXXX\bigl({}_\tau\varPhi_{\tau+l}(U,U'),{}_\tau\varPhi_{\tau+l}'(U,U')\bigr)
\le\vt\bigr\}\ge p.
\end{equation*}
For $l$ as above and any $k\ge 0,$ we set
\begin{equation*}
\begin{split}
&\tau_k:=\tau+kl,\\
&\varPhi_{\tau_k}:={}_\tau \varPhi_{\tau_k}(U,U'),\\
&\varPhi_{\tau_k}':={}_\tau \varPhi_{\tau_k}'(U,U').
\end{split}
\end{equation*}
By \eqref{laws}
$\DD(\varPhi_{\tau_k})=\DD(U_k),\DD(\varPhi_{\tau_k}')=\DD(U_k')$. We first prove that there exists some $\kappa\in (0,1)$ such that
\begin{equation}\label{12}
\E F(\varPhi_{\tau_{k+1}},\varPhi_{\tau_{k+1}}')\leq \kappa \E F(\varPhi_{\tau_k},\varPhi_{\tau_k}'),~~\forall k\ge 0.
\end{equation}
Indeed, we recall that $\varPhi_{\tau_{k+1}}$ and $\varPhi_{\tau_{k+1}}'$ depend on the random parameter $\oomega_{\tau_{k+1}}$, which we will write as $\oomega_{\tau_{k+1}}=(\oomega_{\tau_k},\omega_{\tau_k+1},\ldots,\omega_{\tau_{k+1}})=:(\oomega_{\tau_k},\oomega^{l})$. We will denote by $\E^{\oomega_{\tau_k}}$ and $\E^{\oomega^{l}}$ the expectations, respectively, in $\oomega_{\tau_k}$ and in $\oomega^{l}$. To prove \eqref{12}, obviously it suffices to verify \eqref{12} for every fixed $\oomega_{\tau_k}$ and with $\E$ replaced
by $\E^{\oomega^{l}}$. Let $d=\dd_\XXXX(\varPhi_{\tau_{k}},\varPhi_{\tau_{k}}')$ (this is a random variable, depending on $\oomega_{\tau_{k}}$).

If $\oomega_{\tau_k}\in\{d>\vt\}$, we have $F(\varPhi_{\tau_{k}},\varPhi_{\tau_{k}}')=R$. Denoting event $A:=\{\oomega^{l}~|~\dd_\XXXX(\varPhi_{\tau_{k+1}},\varPhi_{\tau_{k+1}}')\leq \vt\}$, we have from Proposition \ref{pc2} that in this case
\begin{equation*}
\begin{split}
\E^{\oomega^{l}} F\bigl(\varPhi_{\tau_{k+1}},\varPhi_{\tau_{k+1}}'\bigr)\leq (1-\IP^{\oomega^{l}}(A))R+\IP^{\oomega^{l}}(A)\vt\leq (1-\frac{p}{2})R.
\end{split}
\end{equation*}
If $\oomega_{\tau_k}\in\{d\leq\vt\}$, we have $F(\varPhi_{\tau_{k}},\varPhi_{\tau_{k}}')=d$.
Denoting event $B:=\{\oomega^{l}~|~\dd_\XXXX(\varPhi_{\tau_{k+1}},\varPhi_{\tau_{k+1}}')\leq qd\}$, we have by Proposition \ref{pc1} that $\IP^{\oomega^{l}}(B)\ge 1-N_1d$. In this case, 
\begin{equation*}
\begin{split}
\E^{\oomega^{l}} F\bigl(\varPhi_{\tau_{k+1}},\varPhi_{\tau_{k+1}}'\bigr)
\leq qd\IP^{\oomega^{l}}(B)+(1-\IP^{\oomega^{l}}(B))R\leq (q+N_1R)d.
\end{split}
\end{equation*}
Choosing $\vt>0$ such that $q+N_1R=q+2N_1\vt<1$ and taking $\kappa:=\max\{1-\frac{p}{2},q+N_1R\}$, we achieve \eqref{12}.

It follows from \eqref{12} that $\E F(\varPhi_{\tau_{k}},\varPhi_{\tau_{k}}')\leq \kappa^k \E F(U,U')$.
This and \eqref{KB} imply that
\begin{equation*}
\E F(\varPhi_{\tau_{k}},\varPhi_{\tau_{k}}')\leq C\kappa^k\|\DD(U)-\DD(U')\|_L^*,~~\forall k\ge 0.
\end{equation*}
Thus,
\begin{equation*}
K_F(\DD(U_{\tau_{k}}),\DD(U_{\tau_{k}}')\leq C\kappa^k\|\DD(U)-\DD(U')\|_L^*,~~\forall k\ge 0.
\end{equation*}
So by \eqref{KKant}
\begin{equation}\label{26}
\|\DD(U_{\tau_{k}})-\DD(U_{\tau_{k}}')\|_L^*\leq C\kappa^k\|\DD(U)-\DD(U')\|_L^*,~~\forall k\ge 0.
\end{equation}
This establishes \eqref{evL} if $k=\tau_l$ for some $l$.
For any $k\ge \tau$, we write it as $k=\tau_m+r$, where $m\in \N, 0\le r< l$. Then 2) in Lemma~\ref{l_Lip}, \eqref{26} and \eqref{flow} imply \eqref{evL} for this $k$.

To prove  existence of a Markov process $\{\hat U_k\}_{k\in \Z}$,  satisfying \eqref{Uev} for  each $\tau$ and
 each solution $\{{}_\tau U_k(V)\}$, we first define
 space $\widehat{\XXXX}=\XXXX^\Z=\{\textbf{V}=(V_k\in\XXXX), k\in \Z\}$, provide it with Tikhonov's topology, a corresponding distance, and the Borel $\sigma$-algebra. Then for any $\tau\in \Z$, we consider solution $\{{}_\tau U_k(0,\mathbf{0})=:{}_\tau U_k\}$. It is defined for $k\geq \tau$, and we extend it as ${}_\tau U_l=(0,\mathbf{0})$ for $l< \tau$. Then each process ${}_\tau U_\cdot$ defines a random variable in $\hat{\XXXX}$. For $j\in \Z, s\geq 0$, consider the integer interval $J_j^s=[j,j+1,\cdots,j+s]$, and for any $\tau\in \Z$ denote $\pi_{j}^{s}(\tau)=\DD({}_\tau U_j,\cdots,{}_\tau U_{j+s})$. In view of \eqref{evLs}, for any fixed $j\in \Z$ and $s\geq 0$, the sequence of measures $\{\pi_{j}^{s}(\tau)\}_{\tau\in\Z}$ is a Cauchy sequence in the space $(\PP(\XXXX^s),\|\cdot\|_{L}^*)$, as $\tau\rightarrow -\infty$. Thus,
\begin{equation}\label{17}
\pi_j^s(\tau)\rightharpoonup\pi_j^s~{\rm{as}}~\tau\rightarrow -\infty,
\end{equation}
for some $\pi_j^s\in \PP(\XXXX^{s+1})$. Measures $\pi_{j_1}^{s_1}(\tau)$ and $\pi_{j_2}^{s_2}(\tau)$ with $j_1\leq j_2\leq j_2+s_2\leq j_1+s_1$ agree with respect to the projection $\XXXX^{s_1+1}\rightarrow\XXXX^{s_2+1}$ which
naturally corresponds to the embedding of intervals $J_{j_2}^{s_2}\subset J_{j_1}^{s_1}$. Then by \eqref{17} the measures $\pi_{j_1}^{s_1}$ and $\pi_{j_2}^{s_2}$ also agree. Thus, by Kolmogorov's theorem, they define a Borel measure $\pi\in \PP(\widehat\XXXX)$.
Let $\{\hat U_k\}_{k\in \Z}$ be any random process such that its law equals $\pi$. Convergence \eqref{17} means that
\begin{equation}\label{18}
\DD({}_\tau U_j,\cdots,{}_\tau U_{j+s})\rightharpoonup\DD(\hat{U}_j,\cdots,\hat{U}_{j+s})~{\rm{as}}~\tau\rightarrow -\infty, ~\forall j\in \Z,~\forall s\ge 0.
\end{equation}
From here it easily follows that $\{\hat{U}_{j}\}_{j\in \Z}$ is a Markov process with the same transition probabilities \eqref{TF}. Relation \eqref{Uev} follows from \eqref{evL} and convergences \eqref{18}.

If $\{\bar U_k\}_{k\in \Z}$ is another Markov process in $\XXXX$ with the transition probabilities \eqref{TF},  then
for any $\tau\in \Z$ taking an $\hat\FF_\tau$-measurable random variable $V\in \XXXX$, distributed as
$\DD(\bar U_\tau)$, we find from \eqref{Uev}, where $\DD({}_\tau U_k(V)) = \DD(\bar U_k)$ for $k\ge\tau$, that
$
\| \DD(\hat U_k) - \DD(\bar U_k) \|_L^* \le C e^{-\gamma(k-\tau)}.
$
Sending $\tau$ to $-\infty$ we get that $\DD(\hat U_k) = \DD(\bar U_k) $ for each $k$. So the Markov
process $\{\bar U_k\}$ is  distributed as $\{\hat U_k\}$.
\medskip

Finally, writing ${}_\tau U_j=({}_\tau u_j, {}_\tau \eeta_j), j\in \Z,$ where ${}_\tau \eeta_j=(\cdots,{}_\tau \eta_{j-1},{}_\tau \eta_j)$, and writing $\hat U_j$ as $\hat U_j=(\hat{u}_j, \hat\eeta_j)$, we derive from \eqref{18} that the process $\{\hat\eta_j\}$ is distributed as $\{\eta_j\}$, and obtain from item 3) of Lemma~\ref{l_Lip} with $s=1$  that for each $j\in \Z$,
\begin{equation}\label{19}
\DD({}_\tau U_j,{}_\tau \eta_{j+1})\rightharpoonup\DD( \hat{U}_j, \hat{\eta}_{j+1})~{\rm{as}}~\tau\rightarrow -\infty.
\end{equation}
From \eqref{RDSL} it follows that
\begin{equation*}
\DD({}_\tau U_k)=\SSS_*(\DD({}_\tau U_{k-1},{}_\tau\eta_k)),~\forall \tau,~\forall k.
\end{equation*}
Passing in this relation to a limit as $\tau\rightarrow -\infty$ using \eqref{19}, we get \eqref{15}.

\section{Main result: infinite-dimensional case}\label{s_5}
In this section we discuss systems \eqref{RDSN} with infinite-dimensional  phase-spaces $H$.
We recall that the results  of Sections 1-2 were proved in the general setting,  $\dim H\le\infty$. 
So here we only discuss how
the results in  Sections~\ref{s_3},\,\ref{s_4} should be modifies to treat the infinite-dimensional case.
The corresponding changes address only those parts of the proof, where we used that the dimension of  space $H$
is finite. Essentially this is Lemma~\ref{LS} and its proof, who require a modification of Hypothesis \hypertarget{LCR}{(LCR)}.
Our  presentation below is sketchy, but missing details may be extracted from the work \cite{KS-2025}.
As in  Sections~\ref{s_3},\,\ref{s_4}, we assume the validity of Hypotheses~{\rm \hyperlink{SF}{(SF)}, \hyperlink{RZ}{(RZ)}} and~\hyperlink{GD}{\rm(GD)}. For the reader's convenience, we repeat them now:

\begin{itemize}
	\item [\hypertarget{SF}{\bf(SF)}] \sl For any $l\in \Z$, the regular conditional distribution $Q_l(\xxi;\cdot)$ can be chosen to be a Lipschitz-continuous mapping from~$\bKK$ to the space~$(\PP(\KK),\|\cdot\|_{var})$. Moreover, there is $C>0$, such that
\begin{equation*}
	\|Q_l(\xxi;\cdot)-Q_l(\xxi';\cdot)\|_{var}\le C\dd(\xxi,\xxi')\quad
	\mbox{for any $\xxi,\xxi'\in\bKK$~and $l\in \Z$}.
\end{equation*}
\end{itemize}

\begin{description}\sl
	\item [\hypertarget{RZ}{(RZ) Recurrence to zero}.] \sl For any $n\in\N$   and $\delta>0$, there is an integer $s\ge0$ with
	the property that
\begin{equation*}
\inf_{l\in\Z}	\inf_{\xxi\in\bKK}\mathbb{P}\{| {\eta}(k)|<\delta,~k=l+s+1,\ldots,l+s+n \mid \eeta_l=\xxi
\}>0.
	\end{equation*}
\end{description}

\begin{description}\sl
	\item [\hypertarget{GD}{(GD) Global dissipation}.] There is an integer $k\ge1$ and a number $a\in(0,1)$ such that
\begin{equation*}
	\|S_k(u;\mathbf{0}_k)\|_H\le a\,\|u\|_H
	\quad\mbox{for any $u\in H$}.
\end{equation*}
\end{description}

In addition to them, we impose two more hypotheses, which replace the finite-dimensional Hypothesis \hypertarget{LCR}{(LCR)}.

\begin{description}\sl
	\item [\hypertarget{DLP}{(DLP) Decomposability and Lipschitz property}.]
	There is an increasing sequence of finite-dimensional subspaces $F_n\varsubsetneq F_{n+1}\subset E$, with
	complementary subspaces~$F_n^\bot\subset E$ and the corresponding orthogonal projections
	 $\mathsf P_n:E\to F_n$ and $\mathsf P_n^\perp:E\to F_n^\perp$. It is assumed that the following
	  property holds for any space  $F_n$, $n\ge1$: for each $\xxi\in\bKK$ and $l\in \Z$, the measure $Q_l(\xxi;\cdot)$
	  admits a disintegration with respect to the projection $\mathsf P_n^\bot$,
\begin{equation*}
Q_l(\xxi;\dd y)=Q_{l\, n}^\bot(\xxi;\dd y_n^\bot) Q_{l\, n}(\xxi,y_n^\bot;\dd y_n),
\end{equation*}
where $Q_{l\, n}^\bot(\xxi;\cdot)\in\PP(F_n^\bot)$ is the image of~$Q_l(\xxi;\cdot)$ under the projection~$\mathsf P_n^\bot$, and
$Q_{l\, n}(\xxi,y_n^\bot;\cdot)$ is a measure on $F_n$. 
Moreover, there are uniformly  Lipschitz  functions $\rho_{l\, n}:\bKK\times E\to\R_+$, $l\in\Z$, 
 supported by~$\bKK\times\KK$, such that
\begin{equation*}
Q_{l\, n}(\xxi,y_n^\bot;\dd y_n)=\rho_{l\, n}(\xxi,y_n^\bot,y_n)\ell_n(\dd y_n).
\end{equation*}
\end{description}

Note that in difference with \cite{KS-2025} we do not assume that the space $\cup_n F_n$ is dense in $E$.

To state the last hypothesis we have to assume that system  \eqref{RDSN} has a finite-dimensional determining space
 (see Definition~\ref{d_determining}):
\begin{description}\sl
	\item [\hypertarget{ALC}{(ALC) Approximate linearised controllability}.] There exists an open set $O\supset X\times\KK$ and a finite-dimensional determining subspace $G\subset H$ such that the closure in $H$ of the set $D_\eta S(u,\eta)(F_{\infty})$ contains~$G$ for any $(u,\eta)\in O$, where $F_{\infty}:=\cup_{n=1}^{\infty}F_n$ and spaces $F_n$ are as in Hypothesis~{\rm \hyperlink{DLP}{(DLP)}}
\end{description}

\begin{remark}\label{r_determ}
System \eqref{RDSN} has a finite-dimensional determining subspace if the map~$S$ is smoothing in the sense that for some
Hilbert space~$V$, compactly and densely embedded in~$H$, the map~$S$ is continuously differentiable from $H\times E$ to~$V$. Indeed, in this case~$H$ admits a Hilbert basis $\{e_j, j\ge1\}$ which also is an orthogonal basis of~$V$ such that the norms $\|e_j\|_V$ go to infinity with~$j$ (e.g., see~\cite[Section~2.1]{LM1972}). Then~\eqref{determining} is fulfilled  if we take for~$G$ the vector span of $\{e_j, 1\le j\le N\}$ with a sufficiently large~$N$. \qed
\end{remark}

Now we state the main result of this section.
\begin{theorem}\label{Inf-mixing}
Suppose that Hypotheses~{\rm \hyperlink{SF}{(SF)}, \hyperlink{RZ}{(RZ)}, \hyperlink{GD}{\rm(GD)}, \hyperlink{DLP}{(DLP)}}
and~\hyperlink{ALC}{\rm(ALC)} are fulfilled. Then system \eqref{RDSN} is exponentially mixing in the dual-Lipschitz distance in space $\PP(H)$. Moreover, there exists a process $\{\hat{u}_k\}_{k\in \Z}$ which satisfies \eqref{RDSN} in the sense of distributions:
\begin{equation}\label{Inf-16}
\DD(\hat{u}_k)=S_{*}(\DD(\hat{u}_{k-1},\hat\eta_k)) \quad \forall k\in \Z,
\end{equation}
where $\{\hat\eta_k\}_{k\in \Z}$ is a process in $\KK$, distributed as $\{\eta_k\}_{k\in \Z}$. Process  $\hat u$ is such that
for any initial state $v\in X$ and any $\tau\in \Z$, the corresponding trajectory
$\{u_k={}_\tau u_k(v)\}_{k\geq \tau}$ of~\eqref{RDSN}, \eqref{ICN} satisfies estimates \eqref{22} for all $k\ge\tau+1$
 and all $s$.
 This property defines the distribution of process $\hat u$
in a unique way.
\end{theorem}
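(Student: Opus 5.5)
We follow the route of Sections~\ref{s_3}--\ref{s_4}: we prove the infinite-dimensional analogue of Theorem~\ref{mixingL} for the lifted Markov system \eqref{RDSL} on $\XXXX=X\times\bKK$, and then project to $X$. Section~\ref{s_2} (Lemma~\ref{l_lift}, Proposition~\ref{p-markov-chain}, Lemma~\ref{l_Lip}) was proved for $\dim H\le\infty$, and so was the whole of Section~\ref{NewP} and Section~\ref{PML}. The only inputs there that used $\dim H<\infty$ are the coupling operators of Proposition~\ref{p-constructionRR}, through Lemmas~\ref{LS} and~\ref{p-image-measure}. Hence it suffices to re-prove Proposition~\ref{p-constructionRR}, with the same Independence and Squeezing properties, under (DLP) and (ALC). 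Given that, Propositions~\ref{pc1} and~\ref{pc2} and the Kantorovich-functional iteration \eqref{12} run verbatim. Proposition~\ref{pc2} uses only (RZ), (GD) and the compactness of $X$. This yields \eqref{evL}, \eqref{Uev}, \eqref{evLs} and the limiting process $\hat U$ with \eqref{15}. Then $\hat u_k:=\Pi_X\hat U_k$ gives \eqref{Inf-16}, and \eqref{22} follows by projecting \eqref{evLs}, exactly as in the first half of the proof of Theorem~\ref{mixing}. Uniqueness of the law of $\hat u$ also follows as there. The total-variation upgrade is simply omitted, which is why only the dual-Lipschitz conclusion is claimed.

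The main work is the replacement of Lemma~\ref{LS}, i.e. approximate local stabilisation. Fix a determining $G$ with constant $\varkappa<1$ from (ALC). Since $X\times\KK$ is compact, $S$ is $C^2$, and $\overline{D_\eta S(u,\eta)F_\infty}\supset G$, I would first choose one $n$ and a $C^1$ family of bounded operators $B(u,\eta):G\to F_n$ such that
$\|{\mathsf P}_G D_\eta S(u,\eta)B(u,\eta)-I_G\|\le\epsilon$ on a neighbourhood of $X\times\KK$. This is done locally by picking finitely many vectors of $F_\infty$ whose images approximate a basis of $G$, and patching with a partition of unity. Set $\tilde\varPhi(v,v',\xi)=B(v,\xi){\mathsf P}_G D_vS(v,\xi)(v-v')\in F_n$. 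By Taylor's formula,
\[
S(v,\xi)-S(v',\xi+\tilde\varPhi)=({\mathsf P}_G+(I-{\mathsf P}_G))D_vS\,(v-v')-D_\xi S\,\tilde\varPhi+O(\|v-v'\|^2).
\]
The $G$-part is bounded by $C\epsilon\|v-v'\|$. The $G^\perp$-part is bounded by $\varkappa\|v-v'\|+\|(I-{\mathsf P}_G)D_\xi S\,\tilde\varPhi\|$. That last term is not small a priori, and this is where I expect trouble. I would handle it by working with a modified metric: replace $\|v-v'\|_H$ in \eqref{metric-X} by $\|{\mathsf P}_G(v-v')\|+\beta\|(I-{\mathsf P}_G)(v-v')\|$ with small $\beta$, which is equivalent to the norm on $H$. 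Then the uncontrolled $G^\perp$ contribution $\beta(\varkappa+C)\|{\mathsf P}_G(v-v')\|$ is absorbed once $\beta C$ and $\epsilon$ are small. The $G^\perp$ self-contraction keeps the factor $\varkappa$. So we obtain contraction by some $a<1$ in the new norm, and choosing $L$ large as before gives \eqref{Phi-bound}--\eqref{Phi-squeezing} with a factor $\max(a,\alpha)$ in place of $\alpha$. If a single step is not enough, apply the argument to an iterate $S_k$ and pass to the $k$-step chain.

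Lemma~\ref{p-image-measure} then carries over. The shift $\varPhi$ takes values in the finite-dimensional $F_n$, and (DLP) supplies Lipschitz densities of the conditional measures $Q_{l\,n}$ along $F_n$. So Proposition~6.1 of \cite{KS-2025} gives \eqref{TV-estimate}, with $d$ measured in the modified metric. Step~3 (gluing and maximal coupling) and Step~4 of the proof of Proposition~\ref{p-constructionRR} need no change. The Lipschitz-in-$\xi$ bound for $\varPhi$ follows from the $C^1$ smoothness of $B$, and the density of $\bigcup_n F_n$ in $E$ is never used. The principal obstacle is therefore the approximate right inverse together with the metric rebalancing above. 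Ensuring that the constants ($n$, $\epsilon$, $\beta$, $L$, $\theta$) are uniform in $k\in\Z$ should be routine, since $S$ does not depend on $k$ and the Lipschitz bounds in (SF) and (DLP) are uniform.
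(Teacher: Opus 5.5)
Your overall architecture matches the paper's. You re-prove only Proposition~\ref{p-constructionRR}, reuse Sections~\ref{s_2}, \ref{NewP}, \ref{PML} verbatim, and project to $X$. However, the local stabilisation step, as you set it up, does not close.

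You only impose $\|\mathsf P_G D_\eta S B-I_G\|\le\epsilon$. You then claim that the uncontrolled term $(I-\mathsf P_G)D_\xi S\,\tilde\varPhi$ contributes only $\beta C\|\mathsf P_G(v-v')\|$, and this is false. With $w=v-v'$ we have $\tilde\varPhi=B\,\mathsf P_GD_vS\,w$, and $\mathsf P_GD_vS$ does not vanish on $G^\perp$. So the term contains $(I-\mathsf P_G)D_\xi SB\,\mathsf P_GD_vS\,(I-\mathsf P_G)w$, which is bounded only by $C\|(I-\mathsf P_G)w\|$. In your weighted norm, the coefficient of $\beta\|(I-\mathsf P_G)w\|$ is therefore $\varkappa+C$, plus $\epsilon C'/\beta$ from the $G$-component. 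It is not $\varkappa$, and nothing forces it below $1$.

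No equivalent norm can repair this. Take $H=\R^2$, $G=\R e_1$ and $D_vS=\begin{pmatrix}0&1\\0&\varkappa\end{pmatrix}$, so that $\|(I-\mathsf P_G)D_vS\|=\varkappa$. Take $B$ with $D_\xi SBe_1=e_1-e_2$, which satisfies your condition with $\epsilon=0$. The linearised error map $w\mapsto D_vSw-D_\xi SB\,\mathsf P_GD_vSw$ is then $w\mapsto(0,(1+\varkappa)w_2)$, which has the eigenvalue $1+\varkappa>1$. Your fallback of passing to an iterate $S_k$ is not supported by the hypotheses either, since (ALC) and the determining property are one-step conditions.

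The missing observation is that (ALC) gives approximation in the full norm of $H$, not just after projecting to $G$. One can take $B_\epsilon$ with values in a single $F_n$ such that $\|D_\eta S(u,\eta)B_\epsilon(u,\eta)f-f\|_H\le\epsilon\|f\|_H$ for all $f\in G$. This is the paper's Lemma~\ref{Inf-RI}, proved with the regularised inverse $A_n^*(A_nA_n^*+\delta I)^{-1}$ and a Dini/compactness argument.

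Your own local construction already delivers this bound: choose finitely many $y_j\in F_\infty$ whose images approximate an orthonormal basis of $G$ in $H$, and patch with a partition of unity. A convex combination preserves the $H$-norm error. You simply discarded this information by projecting onto $G$.

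With the full bound, set $f=\mathsf P_GD_vS(v-v')$. Then $(I-\mathsf P_G)D_\xi S\tilde\varPhi=(I-\mathsf P_G)(D_\xi SB_\epsilon f-f)$ is $O(\epsilon\|v-v'\|_H)$. You get directly $\|\Delta\|_H\le(\varkappa+C_2\epsilon)\|v-v'\|_H+C_3\|v-v'\|_H^2$ in the original norm, so no metric rebalancing is needed. Choosing $\epsilon,\delta$ small and $L$ large gives Lemma~\ref{Inf-LS}. After that, the rest of your plan (Lemma~\ref{Inf-p-image-measure} via the finite-dimensional range of $\varPhi$, the coupling steps, and the projection) goes through as in the paper.
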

As we see, in the infinite-dimensional case, all assertions of Theorem~\ref{mixing} remain true, apart from \eqref{uev}.
The proof of the latter crucially uses that $dim~H<\infty$, and  most likely that result is wrong in general infinite-dimensional case.

\begin{proof}[Sketch of the proof of Theorem~\ref{Inf-mixing}]
As in the finite-dimensional case, the theorem  is proved via lifting system \eqref{RDSN} to the Markov system
\eqref{RDSL} (see Lemma~\ref{l_lift}).
For the latter system, we have

\begin{theorem}\label{Inf_LThm}
Under the hypotheses of Theorem~\ref{Inf-mixing}, all assertions of Theorem~\ref{mixingL} stay true.
\end{theorem}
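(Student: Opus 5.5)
The plan is to repeat the proof of Theorem~\ref{mixingL} from Section~\ref{s_4} verbatim, and to isolate the places where $\dim H<\infty$ was used. Lemma~\ref{l_Lip}, Proposition~\ref{p-markov-chain}, the coupling construction of Section~\ref{NewP}, Propositions~\ref{pc1} and~\ref{pc2}, and the Kantorovich-functional argument of Section~\ref{PML} use only the following ingredients: compactness of $\XXXX$, Hypotheses (SF), (RZ), (GD), and the conclusions of Proposition~\ref{p-constructionRR}. The recurrence step in Proposition~\ref{pc2} uses (GD) and continuity of $S$ on the compact set $X$, and is dimension-free. Likewise, \eqref{KKant} and \eqref{Lip_Kant} hold on any bounded Polish space. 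So it suffices to prove Proposition~\ref{p-constructionRR} under (DLP) and (ALC) instead of (LCR). Its Steps~3--4 (gluing lemma, maximal coupling, squeezing estimate) are also abstract. Hence the work reduces to suitable versions of Lemma~\ref{LS} (local stabilisation) and Lemma~\ref{p-image-measure} (total variation estimate for the shifted measure).

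For Lemma~\ref{LS}, with $G$ the determining subspace from (ALC), I will look for $\tilde\varPhi(v,v',\xi)\in F_n$, for a fixed large $n$, that only approximately inverts the $G$-component. Write $A(v,\xi)=D_\xi S(v,\xi)$. By (ALC) and compactness of $X\times\KK$, the closure of $A(F_\infty)$ contains $G$. Since $\dim G<\infty$, I can pick $n$ such that ${\mathsf P}_G A(v,\xi)|_{F_n}\colon F_n\to G$ is onto for all $(v,\xi)$ in a neighbourhood of $X\times\KK$. This uses a finite-dimensional perturbation argument: an approximate basis of $G$ lying in $A(F_n)$ stays a basis under small perturbation, and a compactness plus semicontinuity argument makes $n$ uniform.

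Let $B(v,\xi)$ be the Moore--Penrose right inverse of ${\mathsf P}_G A|_{F_n}$, and set $\tilde\varPhi=B\,{\mathsf P}_G D_vS(v,\xi)(v-v')$. The Taylor expansion \eqref{taylor} then gives
\[
S(v,\xi)-S(v',\xi+\tilde\varPhi) = (I-{\mathsf P}_G)\bigl(D_vS(v-v')-A\tilde\varPhi\bigr)+r .
\]
The first term is bounded by $\varkappa\|v-v'\|_H$ plus the contribution $(I-{\mathsf P}_G)A\tilde\varPhi$, and the latter is $O(\|v-v'\|)$ with a constant that need not be small. This is the main obstacle. My first attempt is to replace the one-step squeezing by squeezing with factor $\varkappa'<1$ after finitely many steps, or to enlarge $G$. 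A cleaner route: (ALC) gives that the closure of $A(F_\infty)$ contains $G$ for every determining $G$ we choose. So I take $G$ so large that \eqref{determining} holds with $\varkappa$ small. I then pick $\tilde\varPhi\in F_n$ with $\|{\mathsf P}_G(D_vS(v-v')-A\tilde\varPhi)\|$ tiny, and control $(I-{\mathsf P}_G)A\tilde\varPhi$ by first choosing $G$ and then bounding $\|B\|$. If this circularity is a problem, I will fall back on the contraction scheme of \cite{KS-2025}, which handles exactly this point. The result is \eqref{tPhi-squeezing} with a factor $\frac12\alpha$ replaced by some $a<1$. Choosing $L$ large and reducing $\alpha$ accordingly then gives \eqref{Phi-squeezing} with some $q<1$.

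For Lemma~\ref{p-image-measure}, $\varPhi$ now takes values in the finite-dimensional space $F_n$, is Lipschitz in $\xi$ with constant $C_*d$, and is bounded by $C_*d$. (DLP) with this $n$ provides the disintegration with uniformly Lipschitz densities $\rho_{l\,n}$. Hence Proposition~6.1 of \cite{KS-2025} applies exactly as before and yields \eqref{TV-estimate}. With Proposition~\ref{p-constructionRR} restored, the rest of Section~\ref{s_4} goes through unchanged. It gives \eqref{evL}, the construction of $\hat U$ via \eqref{17}--\eqref{18}, uniqueness, and \eqref{15}, which are all the assertions of Theorem~\ref{mixingL}.
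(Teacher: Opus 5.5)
You reduce the theorem the same way the paper does. Everything outside Lemma~\ref{LS} is dimension-free. Lemma~\ref{p-image-measure} transfers once $\varPhi$ takes values in one fixed $F_n$, using (DLP) for that $n$ and Proposition~6.1 of \cite{KS-2025}. So the whole content is the infinite-dimensional local stabilisation, Lemma~\ref{Inf-LS}, and that is exactly the step you do not prove.

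\textbf{Why your construction fails.} With $A=D_\xi S$ and $B$ the Moore--Penrose right inverse of $\mathsf P_GA|_{F_n}$, you cancel the $G$-component of $D_vS(v-v')-A\tilde\varPhi$. The cross term $(I-\mathsf P_G)A\tilde\varPhi$ remains, and bounding $\|B\|$ cannot control it. Indeed $\mathsf P_GAB=I$ on $G$ forces $\|A\|\,\|B\|\ge1$, so the bound $\|A\|\,\|B\|\,\|\mathsf P_GD_vS\|\,\|v-v'\|_H$ is at least $\|\mathsf P_GD_vS\|\,\|v-v'\|_H$, which is not small in general.

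Here is a concrete failure. Take orthonormal $f_j$ with $F_n=\mathrm{span}\{f_1,\dots,f_n\}$, and set $Af_1=e_1+e_2$, $Af_2=e_2$, $Af_j=0$ for $j\ge3$. Let $G=\mathrm{span}\{e_1\}$ and $D_vS=2\mathsf P_G$, so \eqref{determining} holds for any $\varkappa$. Then $G\subset A(F_2)$, but $Be_1=f_1$. The linear part of your $\Delta$ is $-2\langle v-v',e_1\rangle e_2$, so there is no contraction at all.

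\textbf{Your fallbacks do not close the gap.} The ``cleaner route'' misreads (ALC). That hypothesis asserts $G\subset\overline{A(F_\infty)}$ for one particular determining $G$, not for enlargements of it. Enlarging $G$ would in any case shrink $\varkappa$ but not the cross term. Multi-step squeezing and ``the scheme of \cite{KS-2025}'' are named but not carried out.

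\textbf{The missing idea.} Use (ALC) at full strength: $G$ lies in the $H$-closure of $A(y)F_\infty$. Vectors of $G$ are therefore approximable by $A(y)F_n$ in the norm of $H$, not merely after projecting to $G$. This is Lemma~\ref{Inf-RI}. Dini's theorem on $Y\times\overline{B_G(1)}$ gives a uniform $n$ with $d_H(f,A(y)F_n)\le\varepsilon$. The regularised operator $B_\varepsilon(y)=A_n^*(A_nA_n^*+\delta_\varepsilon I)^{-1}$, with $A_n=A|_{F_n}$, is $C^1$ in $y$ without any surjectivity assumption. It satisfies $\|A(y)B_\varepsilon(y)f-f\|_H\le\varepsilon\|f\|_H$ for $f\in G$. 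Setting $\tilde\varPhi=B_\varepsilon\mathsf P_GD_vS(v,\xi)(v-v')$ gives
\[
D_vS(v-v')-A\tilde\varPhi=(I-\mathsf P_G)D_vS(v-v')+\bigl(\mathsf P_GD_vS(v-v')-AB_\varepsilon\mathsf P_GD_vS(v-v')\bigr),
\]
whose norm is at most $(\varkappa+C_2\varepsilon)\|v-v'\|_H$. The $(I-\mathsf P_G)$-part of $A\tilde\varPhi$ is automatically small, because $A\tilde\varPhi$ is $H$-close to a vector of $G$. Adding the Taylor remainder and taking $\varepsilon,\delta$ small yields \eqref{Inf-tPhi-squeezing} with $q'<1$. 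A large $L$ then gives \eqref{Inf-Phi-squeezing} with $q=\max\{\alpha,L^{-1}C'+q'\}$.

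\textbf{A smaller point.} Do not ``reduce $\alpha$''. (SF) is a Lipschitz condition for the metric \eqref{distance-EE} with the given $\alpha$, and it need not survive passing to a smaller $\alpha$. It is also unnecessary, since the rest of the argument only uses $q<1$.
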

Derivation of Theorem~\ref{Inf-mixing} from Theorem~\ref{Inf_LThm} is literally the same as derivation of Theorem~\ref{mixing} from Theorem~\ref{mixingL}.

Below we outline a proof of Theorem~\ref{Inf_LThm}, adopting the approach, used to prove Theorem~\ref{mixingL}.

\smallskip
{\it Step~1: Local stabilisation\/}.

\begin{lemma}\label{Inf-LS}
There are positive numbers $C_*$, $\delta$, and $q<1$, a finite-dimensional subspace $F:=F_n \subset E$ as in (DLP),
 and a continuous map \ 
$
\varPhi:D_\delta\times \KK\to F
$
such that, for any $(U,U')\in D_\delta$,
\begin{align}
	\sup_{\xi\in \KK}\|\varPhi(U,U',\xi)\|_E+\Lip_\xi\bigl(\varPhi(U,U',\cdot)\bigr)&\le C_*\dd_\XXXX(U,U'), \label{Inf-Phi-bound}\\
	\sup_{\xi\in \KK}\dd_\XXXX\bigl(\SSS(U,\xi),\SSS(U',\xi+\varPhi(U,U',\xi)\bigr) &\le q\,\dd_\XXXX(U,U'). \label{Inf-Phi-squeezing}
\end{align}
\end{lemma}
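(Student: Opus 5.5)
Modelled on the proof of Lemma~\ref{LS}, the plan is to replace the exact right inverse of $D_\xi S$ by an approximate one, acting only on the determining subspace $G$ and valued in a single space $F_n$. The contraction in the complementary directions will then come from \eqref{determining}.

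\emph{Construction of the control.} Write $A(u,\eta)=D_\eta S(u,\eta)$ and $\mathsf P_G$ for the projection onto $G$. By (ALC), for every $(u,\eta)\in O$ the closure of $A(u,\eta)F_\infty$ contains $G$. Since $G$ is finite-dimensional and the $F_n$ increase, $\mathsf P_G A(u,\eta)F_n=G$ for $n$ large, even if the closure is needed. More precisely, take a basis $g_1,\dots,g_m$ of $G$ and approximants $f_j\in F_n$ with $\|A f_j-g_j\|$ small. Then $\mathsf P_GA$ maps $\mathrm{span}\{f_j\}$ onto $G$, because surjectivity is an open condition in finite dimensions. Compactness of $X\times\KK$ and continuity of $A$ in operator norm (from $C^2$) let me pick one $n$ that works on a neighbourhood $O_-$. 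The map $\mathsf P_G A(u,\eta)|_{F_n}:F_n\to G$ is then surjective, and its Moore--Penrose right inverse $B(u,\eta):G\to F_n$ is $C^1$ and uniformly bounded.

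I would define
\[
\tilde\varPhi(v,v',\xi)=B(v,\xi)\,\mathsf P_G D_vS(v,\xi)(v-v').
\]
The bound $\|\tilde\varPhi\|+\Lip_\xi\tilde\varPhi\le C'\|v-v'\|_H$ follows as in \eqref{tPhi-bound}, since $\tilde\varPhi(v,v,\cdot)=0$ and the map is $C^1$ on a compact set.

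\emph{Squeezing.} The Taylor expansion \eqref{taylor} gives $\Delta=D_vS(v-v')-A\tilde\varPhi+r$. Projecting onto $G$ yields $\mathsf P_G\Delta=\mathsf P_G r$. Projecting onto $G^\perp$ yields
\[
(I-\mathsf P_G)\Delta=(I-\mathsf P_G)D_vS(v-v')-(I-\mathsf P_G)A\tilde\varPhi+(I-\mathsf P_G)r .
\]
The first term has norm at most $\varkappa\|v-v'\|$ by \eqref{determining}. The second term is the main obstacle, because it is of first order in $\|v-v'\|$ and not small. I would handle it by modifying the determining estimate: choose $G$ (or check that the problem allows it) so that \eqref{determining} holds with a $\varkappa$ small enough to absorb $\|(I-\mathsf P_G)A\|\,\|B\|\,\|\mathsf P_GD_vS\|$. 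If that is not available, the fallback is to choose $f_j$ with $\|Af_j-g_j\|\le\epsilon$ and build $B$ from these, so that $AB\approx \mathrm{id}_G$ up to $O(\epsilon)$ in the full norm of $H$. Then $A\tilde\varPhi=\mathsf P_GD_vS(v-v')+O(\epsilon\|v-v'\|)$, and the second term becomes $O(\epsilon)\|v-v'\|$. In either case $\|\Delta\|\le(\varkappa+C\epsilon+C\delta)\|v-v'\|$, and this is at most $q_0\|v-v'\|$ with $q_0<1$ once $\epsilon$ and $\delta$ are small.

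\emph{Passing to $\XXXX$.} As in the end of the proof of Lemma~\ref{LS},
\[
\dd_\XXXX\bigl(\SSS(U,\xi),\SSS(U',\xi+\varPhi)\bigr)\le Lq_0\|v-v'\|+\alpha\,\dd(\xxi,\xxi')+\|\varPhi\|_E .
\]
Using $\|\varPhi\|\le C'\|v-v'\|$, the right-hand side is at most $L(q_0+C'/L)\|v-v'\|+\alpha\,\dd(\xxi,\xxi')$. I would take $L$ large so that $q_0+C'/L<1$, and set $q=\max(q_0+C'/L,\alpha)$; this gives \eqref{Inf-Phi-squeezing}, and \eqref{Inf-Phi-bound} holds with $C_*=C'/L$.
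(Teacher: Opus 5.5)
Your main route, with $B$ an exact right inverse of $\mathsf P_GA|_{F_n}$, fails for the reason you identify. It only kills $\mathsf P_G\Delta$, while $(I-\mathsf P_G)A\tilde\varPhi$ stays of first order with no small factor.

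Your first remedy is not available under the hypotheses of the lemma. $G$ and $\varkappa$ are fixed by (ALC) and \eqref{determining}, and a larger $G$ need not satisfy (ALC). The quantity you want to absorb also contains $\|B\|$, which depends on $G$ and typically grows with it, so the requirement is circular and amounts to an extra assumption.

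Your fallback is the right idea and is exactly the paper's proof. The paper takes an approximate right inverse $B_\e\colon G\to F_n$ with $\|AB_\e f-f\|_H\le\e\|f\|_H$ in the full $H$-norm (Lemma~\ref{Inf-RI}) and sets $\tilde\varPhi=B_\e\,\mathsf P_GD_vS(v,\xi)(v-v')$. It then obtains $\|\Delta\|_H\le(\varkappa+C_2\e)\|v-v'\|_H+C_3\|v-v'\|_H^2$ and chooses $L$ large, just as in your last paragraph.

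What you have not supplied is the content of that lemma, and this is where the work lies. At a single point $y$, picking $f_j\in F_n$ with $\|A(y)f_j-g_j\|_H\le\e$ is immediate from (ALC). You need more:
\begin{itemize}
\item one $n$, and a map $y\mapsto B(y)$ that achieves this uniformly on $X\times\KK$;
\item $B$ must be $C^1$, or at least Lipschitz in $\xi$, because \eqref{Inf-Phi-bound} demands $\Lip_\xi\varPhi\le C_*d$, and Lemma~\ref{Inf-p-image-measure} uses exactly this.
\end{itemize}

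The paper obtains $B_\e$ canonically as $A_n^*(A_nA_n^*+\delta I)^{-1}$. Dini's theorem gives a uniform $n$, and compactness gives a uniform $\delta$.

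A more elementary alternative, closer to your sketch, also works:
\begin{itemize}
\item $A$ is norm-continuous (since $S$ is $C^2$), so each pointwise choice $g_j\mapsto f_j^{(i)}$ still works with $2\e$ on a ball around $y_i$.
\item Cover $X\times\KK$ by finitely many such balls and let $n=\max_i n_i$.
\item Glue these finitely many linear maps with a $C^1$ partition of unity built from $\|y-y_i\|^2$.
\item For orthonormal $g_j$ this yields $\|ABf-f\|_H\le 2\sqrt{\dim G}\,\e\|f\|_H$.
\end{itemize}

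Finally, make the order of choices explicit. $\|B_\e\|$, and hence $C'$, depends on $\e$, so the constant in the quadratic remainder $C_1(\|v-v'\|^2+\|\tilde\varPhi\|^2)$ does too. Therefore $\delta$ must be fixed after $\e$, and $L$ last.
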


This assertion is very similar to that of Lemma~\ref{LS}, but its proof is more complicated since now  dimension of $H$
is infinite. We start with a result  from linear algebra, crucial for the proof of Lemma~\ref{Inf-LS}. 
 It improves a bit a corresponding statement in \cite{KS-2025},
 and its demonstration  follows the argument in  that work.

\begin{lemma}\label{Inf-RI}
Let  $G\subset H$ be a finite-dimensional vector subspace, $O$ be an open subset of a Hilbert space~$\HH$.
Assume that $A:O\to\LL(E,H)$ is a $C^1$-smooth map such that, the closure of $A(y)(F_{\infty})$ contains~$G$
for any $y\in O$, where $F_{\infty}=\cup_{n=1}^{\infty}F_n$ as in Hypothesis~\hyperlink{ALC}{(ALC)}.
 Then, for any $\e>0$ and any compact set $Y\subset O$, there is an integer $n\ge1$ and a $C^1$-smooth map $B_\e:O\to\LL(G,F_n)$ such that
\begin{equation}\label{Inf-ARI}
	\|A(y)B_\e(y)f-f\|_H\le \e\,\|f\|_H
	\quad\mbox{for any $f\in G$, $y\in Y$}.
\end{equation}
\end{lemma}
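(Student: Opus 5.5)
The plan is to build $B_\e$ locally and then glue the local pieces with a partition of unity. Fix an orthonormal basis $g_1,\dots,g_m$ of $G$ and $\e>0$.

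\emph{Pointwise construction.} For each $y_0\in Y$ the hypothesis gives that $G$ lies in the closure of $A(y_0)(F_\infty)$. Hence for every $j$ there is a vector $h_j(y_0)\in F_\infty$ with $\|A(y_0)h_j(y_0)-g_j\|_H\le \e/(2\sqrt m)$. Since $F_\infty$ is an increasing union of the $F_n$, all $m$ vectors $h_j(y_0)$ lie in one space $F_{n(y_0)}$.

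\emph{Local linear operator.} Define $B^{y_0}\in\LL(G,F_{n(y_0)})$ by $B^{y_0}g_j=h_j(y_0)$. By Cauchy--Schwarz,
\[
\|A(y_0)B^{y_0}f-f\|_H\le \sqrt m\,\max_j\|A(y_0)h_j(y_0)-g_j\|_H\,\|f\|_H\le \tfrac{\e}{2}\|f\|_H .
\]
The map $y\mapsto A(y)$ is continuous in operator norm and $B^{y_0}$ is a fixed bounded operator. So there is an open ball $W_{y_0}\ni y_0$ in $O$ on which $\|A(y)B^{y_0}f-f\|_H\le \e\|f\|_H$ for all $f\in G$.

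\emph{Gluing.} By compactness, $Y$ is covered by finitely many balls $W_{y_1},\dots,W_{y_r}$. Set $n=\max_i n(y_i)$; every $B^{y_i}$ then maps into $F_n$. The key point is to use a smooth partition of unity. A Hilbert space admits $C^\infty$ bump functions, for instance $\chi(\|y-y_i\|^2)$ with $\chi$ smooth. So there are $C^1$ functions $\phi_i\ge0$ on $O$ with $\supp\phi_i\subset W_{y_i}$ and $\sum_i\phi_i= 1$ on a neighbourhood of $Y$. To achieve this, take bumps $\psi_i$ that are positive on slightly shrunk balls still covering $Y$. Then choose a smooth cutoff $\chi_0$ of $\sum_i\psi_i$ and put $\phi_i=\psi_i\,\chi_0(\sum_j\psi_j)/\sum_j\psi_j$ where $\sum_j\psi_j>0$, and $\phi_i=0$ elsewhere.

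Define
\[
B_\e(y)=\sum_i\phi_i(y)B^{y_i}\in\LL(G,F_n).
\]
This map is $C^1$ on all of $O$, being a finite sum of $C^1$ scalar functions times constant operators. For $y\in Y$ we have $\sum_i\phi_i(y)=1$, so
\[
A(y)B_\e(y)f-f=\sum_i\phi_i(y)\bigl(A(y)B^{y_i}f-f\bigr).
\]
Each nonzero term has $y\in W_{y_i}$, so by convexity the norm is at most $\e\|f\|_H$. This gives \eqref{Inf-ARI}.

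\emph{Main obstacle.} The only delicate point is the gluing step. Exact right inverses, as in Lemma~\ref{LS}, cannot be glued; only approximate ones survive convex combination, which is why the estimate is the convexity bound above. We also need $C^1$ partitions of unity on the possibly infinite-dimensional space $\HH$. Finitely many balls and smooth functions of the squared Hilbert norm suffice, so no general smooth-partition theory is needed. Note that the $C^1$-smoothness of $A$ is used only as continuity.
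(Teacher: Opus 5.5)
Your argument is correct, but it takes a different route from the paper.

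The paper works with one global formula. Dini's theorem, applied to $(y,f)\mapsto\dd_H\bigl(f,A(y)F_n\bigr)$ on $Y\times\AB$, gives a single $n$. Then, writing $A_n(y)$ for the restriction of $A(y)$ to $F_n$, the paper takes the Tikhonov-regularised Moore--Penrose inverse $B(y,\delta)=A_n(y)^*\bigl(A_n(y)A_n(y)^*+\delta I\bigr)^{-1}$. It proves $\limsup_{\delta\to0}\|A(y)B(y,\delta)f-f\|_H\le3\e$ pointwise, and uses an $\e$-net of $\AB$ together with compactness of $Y$ to fix one $\delta_\e$, setting $B_\e=B(\cdot,\delta_\e)$. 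In that approach the smoothness of $B_\e$ is inherited from $A$ through the explicit formula. No rank condition is needed, since $A_nA_n^*+\delta I$ is invertible for every $\delta>0$. The resulting map is canonical, is defined on all of $H$, and is the natural regularisation of the exact right inverse used in Lemma~\ref{LS}.

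You instead glue locally constant approximate right inverses with a smooth partition of unity built from functions of $\|y-y_i\|^2$. Both the uniform $n$ and the uniformity in $y$ come directly from the finite subcover. The estimate survives the gluing because the condition $\|A(y)Bf-f\|_H\le\e\|f\|_H$ is convex in $B$. Your route is more elementary, uses only continuity of $A$, and even yields a $C^\infty$ map $B_\e$. Its price is a non-canonical construction that relies on smooth bump functions on $\HH$, which are available because $\HH$ is a Hilbert space.

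Your handling of uniformity is also more transparent than the paper's passage from a pointwise $\delta(y,\e)$ to a common $\delta_\e$. That step needs an extra observation, for example that $\delta\mapsto\|\delta\bigl(A_nA_n^*+\delta I\bigr)^{-1}f\|_H$ is nondecreasing.

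One small correction to your closing remark. Right inverses of a fixed operator form an affine set, so convex combinations of exact right inverses remain exact. Two things force you to work with approximate ones:
\begin{itemize}
\item each glued piece $B^{y_i}$ is constant in $y$;
\item under (ALC), exact right inverses with values in a finite-dimensional $F_n$ need not exist at all, since $G$ is only contained in the closure of $A(y)(F_\infty)$.
\end{itemize}
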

\begin{proof} 
The proof goes in three steps. 

1) 
Let $Y\subset O$ be a compact set and  $\e$ be a positive number. 
 We denote by $\dd_H(f,H')$ the distance  in $H$
  of a vector $f$ from a subspace~$H'$. Since $A(y)(F_{\infty})$ is dense in~$H$, then the continuous functions $(y,f)\mapsto \dd_H\bigl(f,A(y)F_n\bigr)$ pointwise monotonically converge to zero as~$n\to\infty$. This convergence is uniform on compact sets according to the Dini theorem.  Thus there exists $n=n(\eps)$  such that 
 \be\label{d1}
 \sup_{y\in Y} \sup_{f \in \AB} d_H\big(f, A(y) F_n) \le \eps. 
 \ee

2) For $n\ge1$, $y\in Y$ and $\delta\in(0,1]$ 
 denote by $A_n(y): F_n \to H$ the restriction of $A(y)$ to $F_n$, and consider the operator
$$
B(y,\delta) = A_n(y)^* \big( A_n(y) A_n(y)^* +\delta I \big)^{-1}: H \to F_n. 
$$
Denote $D(y, \delta) = A(y) B(y,\delta)$ and suppose we have proved  that for each $y \in Y$ and $f\in \AB$, 
\be\label{d2}
\limsup_{\delta\to0} \| D(y, \delta)f - f\| \le 3\eps.
\ee
Now we begin to prove \eqref{Inf-ARI}.
Let $f_1, \dots, f_N \in G$ be an $\eps$-net for $\AB$. Then for any $(y, \delta) \in Y\times (0,1]$ and  $ f \in \AB$, 
$$
\| D(y, \delta) f -f\|_H \le 2\eps +\| D(y,\delta) f_j-f_j\|,
$$
where $\| f-f_j\| \le \eps$, and we used that $\| D(y, \delta) \|_{\LL(H)} \le 1$. By this relation and \eqref{d2} we have  that for
 each $y\in Y$, 
$$
\sup_{f \in \AB} \| D(y,\delta) f-f\|_H \le 6\eps \qquad \forall\, 0< \delta\le\delta(y,\eps), 
$$
for a suitable $\delta(y,\eps)>0$. From here, by compactness, we can find $\delta_\eps>0$ such that 
$$
\sup_{y\in Y} \sup_{f \in \AB} \|D(y, \delta_\eps) f - f\|_H \le 7\eps. 
$$
This proves \eqref{Inf-ARI} with $\eps:= 7\eps$ and $B_\eps(y) = B(y, \delta_\eps)$.
So it remains to establish \eqref{d2} for fixed  $y \in Y$ and $f\in \AB$.

3)  By \eqref{d1} for every $f\in \AB$  there is $f_\eps \in A_n(y)F_n$ such that $\| f-f_\eps\| \le \eps$. 
Denote by $Q$ the operator $A_n(y) A^*_n(y): H\to H$. Then $Q=Q^*\ge0$, and 
\be\label{d9}
D(y, \delta) f_\eps- f_\eps = Q(Q+\delta I)^{-1} f_\eps- f_\eps.
\ee
 Using again that 
 $\| D(y, \delta) \|_{\LL(H)} \le 1$ we obtain the inequality 
 \be\label{d8}
 \| D(y, \delta) f -f \|_H \le 2\eps + \| D(y, \delta) f_\eps - f_\eps\|_H.
 \ee
 Since $A_n(y)$ is an injection, then  $A^*_n(y)$ is a surjection, and so 
 $
 f_\eps \in Q(H) = A_n(y)(F_n).
 $
 From here it follows that the vector in the r.h.s. of \eqref{d9} goes to zero with $\delta$. Thus the r.h.s. of \eqref{d8}
 is bounded by $3\eps$ is $\delta$ is small, which proves \eqref{d2}.
\end{proof}

With this lemma in hands, a proof of Lemma~\ref{Inf-LS} follows that of  Lemma~\ref{LS}. 
We define the set
$$
\widetilde D_\delta=\{(v,v')\in X\times X:\|v-v'\|_H\le\delta\}
$$
and suppose that, for a small $\delta>0$, we have constructed a continuous map
$$
\tilde\varPhi:\widetilde D_{\delta}\times \KK\to E, \quad (v,v',\xi)\mapsto\xi',
$$
such that, for any $(v,v')\in \widetilde D_{\delta}$, the map $\tilde\varPhi(v,v',\cdot):\KK\to E$ is continuously differentiable and satisfies the following inequalities (cf.~\eqref{Inf-Phi-bound} and~\eqref{Inf-Phi-squeezing})
 \begin{align}
\sup_{\xi\in \KK}\Bigl(\|\tilde\varPhi(v,v',\xi)\|_E+\Lip_\xi\bigl(\tilde\varPhi(v,v',\xi)\bigr)\Bigr)&\le C'\|v-v'\|_H, \label{Inf-tPhi-bound}\\
\sup_{\xi\in \KK}\,\bigl\|S(v,\xi)-S\bigl(v',\xi+\tilde\varPhi(v,v',\xi)\bigr)\bigr\|_H &\le q'\|v-v'\|_H, \label{Inf-tPhi-squeezing}
\end{align}
where $C'$ and $q'<1$ are some positive numbers. In this case, recalling that $L\ge1$ is the constant in the distance~\eqref{metric-X} and defining $\varPhi:D_\delta\times \KK\to E$ by the relation
$$
\varPhi(U,U',\xi)=\tilde\varPhi(v,v',\xi), \quad U=(v,\xxi), \quad U'=(v',\xxi'),
$$
we see that~\eqref{Inf-Phi-bound} is trivially satisfied with $C_*=C'L^{-1}$. To prove~\eqref{Inf-Phi-squeezing}, we use~\eqref{Inf-tPhi-bound} and~\eqref{Inf-tPhi-squeezing} to write
\begin{align*}
\dd_\XXXX\bigl(\SSS(U,\xi),\SSS(U',\xi+\varPhi(U,U',\xi)\bigr)\le \alpha\,\dd(\xxi,\xxi')+L\,(L^{-1}C'+q')\|v-v'\|_H.
\end{align*}
We now choose $L$ to be so large that $L^{-1}C'+q'<1$. Then the above estimate implies inequality~\eqref{Inf-Phi-squeezing} with $q=\max\{\alpha$, $L^{-1}C'+q'\}<1$. Thus, it remains to construct~$\tilde\varPhi$ satisfying~\eqref{Inf-tPhi-bound} and~\eqref{Inf-tPhi-squeezing}.

To this end, let us denote by $\Delta(v,v',\xi)$ the expression under the norm sign in the left-hand side of~\eqref{Inf-tPhi-squeezing}, where $\tilde\varPhi$ is a map to be defined. Using the Taylor formula and the $C^2$-smoothness of~$S$, we write
	\begin{equation}\label{Inf-taylor}
	\Delta(v,v',\xi)=(D_vS)(v,\xi)(v-v')-(D_\xi S)(v,\xi)\tilde\varPhi(v,v',\xi)+r(v,v',\xi),
\end{equation}
where the remainder term~$r$ satisfies the inequality
\begin{equation}\label{Inf-estimate-r}
	\|r(v,v',\xi)\|_H\le C_1\bigl(\|v-v'\|_H^2+\|\tilde\varPhi(v,v',\xi)\|_E^2\bigr)
\end{equation}
with some constant~$C_1$ not depending on~$v$, $v'$, and~$\xi$.

By Hypothesis~\hyperlink{ALC}{(ALC)}, for any $(v,\xi)\in O\subset H\times E$, the closure of the image of the
 operator $D_\xi S(v,\xi):F_{\infty}\to H$ contains the finite-dimensional subspace~$G$. Applying Lemma~\ref{Inf-RI} to $A(v,\xi)=D_\xi S(v,\xi)$, for any $\e>0$, we find a finite-dimensional space $F_n\subset E$ and a $C^1$-smooth mapping $B_\e:O\to\LL(G,F_n)$ such that
\begin{equation}\label{Inf-right-inverse}
\|D_\xi S(v,\xi)B_\e(v,\xi)f-f\|_H\le\e\,\|f\|_H\quad\mbox{for $(v,\xi)\in X\times \KK$, $f\in G$}.
\end{equation}
We now set $F=F_{n}$ and
\begin{equation}\label{Inf-Phi-definition}
\tilde\varPhi(v,v',\xi)=B_\e(v,\xi)\,{\mathsf P}_G(D_vS)(v,\xi)(v-v'). 	
\end{equation}
The validity of~\eqref{Inf-tPhi-bound} with some constant $C'=C'(\e)$ follows from the compactness of $X\times \KK$ and the continuity of the functions entering~\eqref{Inf-Phi-definition}. Furthermore, substituting~$\tilde\varPhi$ into~\eqref{Inf-taylor} and using~\eqref{determining}, \eqref{Inf-right-inverse}, and~\eqref{Inf-estimate-r}, we derive
$$
\|\Delta(v,v',\xi)\|_H\le (\varkappa+C_2\e)\|v-v'\|_H+C_3\|v-v'\|_H^2,
$$
where we set
$$
C_2=\sup\bigl\{\bigl\|{\mathsf P}_G(D_vS)(v,\xi)\bigr\|_{\LL(H,G)}:(v,\xi)\in X\times \KK\bigr\},\quad C_3=C_1(1+C').
$$
We now choose $\e>0$ and $\delta>0$ so small that $q':=\varkappa+C_2\e+C_3\delta<1$. Then
 inequality~\eqref{Inf-tPhi-squeezing} holds.
This completes the proof of Lemma~\ref{Inf-LS}.
\par
\smallskip
{\it Step~2: Estimate for the image of transition probabilities under $\varPsi$\/}.

\begin{lemma}\label{Inf-p-image-measure}
Let Hypothesis~\hyperlink{DLP}{\rm(DLP)} hold,~$\varPhi$ be the map in Lemma~\ref{Inf-LS} and $\varPsi(U,U',\eta):=\eta+\varPhi(U,U',\eta)$. Then there is $\theta\in(0,\delta)$ such that, for any $(U,U')\in D_\theta$,
\begin{equation*}
\bigl\|\PPP_k(U;\cdot)-\varPsi_*(U,U',\PPP_k(U;\cdot))\bigr\|_{\mathrm{var}}
	\le C_1d,
\end{equation*}
where $C_1$ stands for some positive number not depending on~$U$ and~$U'$.

\end{lemma}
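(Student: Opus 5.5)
The plan is to reduce Lemma~\ref{Inf-p-image-measure} to the finite-dimensional Lemma~\ref{p-image-measure}, replacing \hyperlink{LCR}{(LCR)}(b) by \hyperlink{DLP}{(DLP)} applied to the single space $F=F_n$ selected in Lemma~\ref{Inf-LS}. Since $\varPhi$ takes values in $F$, the map $\varPsi(U,U',\cdot)$ changes only the $F$-component of $y=(y_F,y_F^\perp)$. Writing $\PPP_k(U;\cdot)=Q_k(\xxi;\cdot)$ and using the disintegration of \hyperlink{DLP}{(DLP)},
$$
\PPP_k(U;\dd y)=Q_{k\,n}^\perp(\xxi;\dd y_F^\perp)\,\rho_{k\,n}(\xxi,y_F^\perp,y_F)\,\ell_F(\dd y_F),
$$
I would estimate the variation distance fibrewise: for fixed $y_F^\perp$, compare $\rho\,\ell_F$ with its image under $T(y_F):=y_F+\varPhi(U,U',y_F+y_F^\perp)$. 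Then integrate over $Q^\perp_{k\,n}(\xxi;\cdot)$, using convexity of the total variation norm under mixtures. This is exactly the setting of Proposition~6.1 of \cite{KS-2025}, which one could also cite directly once its hypotheses (6.1) and (DLP) are checked from \eqref{Inf-Phi-bound} and \hyperlink{DLP}{(DLP)} with the chosen $n$.

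For the fibrewise estimate on $F\simeq\R^{\dim F}$, I argue as follows. By \eqref{Inf-Phi-bound}, $\Lip(T-\mathrm{id})\le C_*d$ and $\sup|T-\mathrm{id}|\le C_*d$. I first take $\theta<\delta$ so small that $C_*\theta\le 1/2$. Then $T$ is a bi-Lipschitz homeomorphism of $F$: it is injective by the contraction estimate, and surjective by the Banach fixed point theorem applied to $z\mapsto x-\varPhi(\cdot)$. By Rademacher's theorem, $T$ is differentiable a.e., with $|\det DT-1|\le C d$. The change of variables formula then gives the density of $T_*(\rho\,\ell_F)$ as
$$
\tilde\rho(x)=\rho(T^{-1}x)\,|\det DT(T^{-1}x)|^{-1}.
$$
By \eqref{20}, the variation distance is half the $L^1$-norm of $\tilde\rho-\rho$. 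I would bound this by
$$
\int|\rho(T^{-1}x)-\rho(x)|\,dx+\int\rho(T^{-1}x)\,\bigl|\,|\det DT|^{-1}-1\bigr|\,dx .
$$
The first integrand is at most $\Lip(\rho)\,|T^{-1}x-x|\le \Lip(\rho)\,C d$. It is supported in the $Cd$-neighbourhood of the projection of $\KK$ onto $F$, a bounded set, so this term is $\le Cd$. The second term is $\le Cd\int\tilde\rho\lesssim Cd$.

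The constants are uniform in $\xxi$, $y_F^\perp$ and $k$ because the $\rho_{k\,n}$ are uniformly Lipschitz and supported in $\bKK\times\KK$. One point needs care here: Lipschitz continuity together with compact support and unit mass forces a uniform bound on $\sup\rho$, so all terms above are controlled. The main obstacle is the regularity of $T$: $\varPhi$ is only Lipschitz in $\xi$, not $C^1$ in the $F$-directions uniformly. I would handle this with the Rademacher and area-formula route above, or else note that $\varPhi$ from \eqref{Inf-Phi-definition} is actually $C^1$ in $\xi$, so the Jacobian computation is classical. Integrating the fibrewise bound over $y_F^\perp$ then yields the claim with $C_1$ independent of $U,U'$.
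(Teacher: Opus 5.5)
Your proposal is correct and follows the paper's route. The paper likewise disintegrates $\PPP_k(U;\cdot)$ along the single space $F=F_n$ from Lemma~\ref{Inf-LS}, uses that $\varPhi$ is $F$-valued, and then invokes Proposition~6.1 of \cite{KS-2025}, whose content is exactly the fibrewise change-of-variables estimate you sketch; the only detail you leave implicit is that $\varPhi(U,U',\cdot)$ is controlled only on $\KK$, so before calling $T$ a bi-Lipschitz homeomorphism of $F$ you should extend it from each slice to all of $F$ (e.g.\ by Kirszbraun's theorem followed by radial truncation to the ball of radius $C_*d$), which keeps $\sup|T-\mathrm{id}|$ and $\Lip(T-\mathrm{id})$ bounded by $C_*d$ and does not change the push-forward of a measure supported on the slice.
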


We recall that $F=F_n\subset E$ is the finite-dimensional space as in Lemma~\ref{Inf-LS}, and that
\begin{equation*}
\PPP_k(U;\cdot)=Q_{k}(\xxi;\cdot) =\DD(\zeta_{k+1}^{\xxi}).
\end{equation*}
This relation and Hypothesis~\hyperlink{DLP}{\rm(DLP)} imply that $\PPP_k(U;\cdot)$, $U\in\XXXX,$
 admits a disintegration of the form
\begin{equation*}
\PPP_k (U;\dd y_F,\dd y_{F}^\bot)=Q_{kF}^\bot(\pi_{\bKK}U;\dd y_{F}^\bot)Q_{kF}(\pi_{\bKK}U,y_{F}^\bot;\dd y_F),
\end{equation*}
where
\begin{equation*}
Q_{kF}(\pi_{\bKK}U,y_{F}^\bot;\dd y_F)=\rho_{k F}(\pi_{\bKK}U,y_{F}^\bot,y_F)\,\ell_F(\dd y_F),
\end{equation*}
and functions  $\rho_{r F}:\bKK\times E\to\R_+$, $r\in \Z$, are uniformly  Lipschitz. 

The remaining part of the lemma's proof is carried out by repeating that of Lemma~\ref{p-image-measure}.
This completes the proof of Lemma~\ref{Inf-p-image-measure}.

\smallskip
{\it Step~3: Proof of Theorem~\ref{Inf-mixing}}.

The rest of the proof repeats Steps 3,4 in Section \ref{3-1} and the arguments in Section \ref{NewP}, \ref{PML}
since they do not use that
the dimension of $H$ is finite. This allows to show  that there are positive numbers~$C$ and~$\gamma$ such that, for any initial states $U,U^\prime\in \XXXX$, the corresponding trajectories $\{U_k\}_{k\geq \tau},\{U_k^\prime\}_{k\geq \tau}$ of~\eqref{RDSL}, \eqref{ICL} satisfies the inequality
	\begin{equation*}
		\|\DD(U_k)-\DD(U_k^\prime)\|_L^*\le C\,e^{-\gamma (k-\tau)}, \quad k\ge \tau+1.
	\end{equation*}
As before, we use this estimate to construct a Markov process $\{\hat U_k\}$, satisfying \eqref{Uev}, 
and next use this to prove other assertions of Theorem~\ref{Inf_LThm}.
\end{proof}

\section{Applications} \label{s_6}

Our main results, Theorems~\ref{mixing} and \ref{Inf-mixing},
 naturally apply  to discrete-time random systems \eqref{RDSN}. Below in  Section~\ref{A1} we  discuss the restrictions, imposed in the theorems
 on the
 process $\{\eta_k\}$, and prove that they are rather mild and are met by a large class of processes. Then we show that many continuous time
 equations \eqref{LZ3}  via reduction \eqref{C2}-\eqref{hatS} lead to systems with random processes $\{\eta_k\}$ from the class above.
 For that end in
 Sections~\ref{s_6.1}-\ref{s_6.2}  we construct  two classes of
processes $\eta(t)$, such that the corresponding discrete time processes $\{\eta_k\}$ belong to that class.
 Next in Section~\ref{s_6.3} we use Theorem~\ref{mixing}  to establish
 the mixing for differential equations in finite-dimensional spaces,
 driven by random processes $\eta(t)$ as above. Finally in Section~\ref{s_6.4} we  discuss applications of
 Theorem~\ref{Inf-mixing} to  a non-linear PDE, stirred by random processes as in  Section~\ref{s_6.2}.

 \subsection{Random processes $\{\eta_k\}$, satisfying  hypotheses in Sections 1-4.} \label{A1}

 In Section 1-4 we imposed on  process $\{\eta_k\in E\}$ restrictions~\hyperlink{SF}{\rm(SF)},~\hyperlink{RZ}{\rm(RZ)}, and~\hyperlink{LCR}{\rm(LCR)(b)}.
 Now we will demonstrate   that they
 are satisfied by plenty of random  processes.

 If space $E$ is finite-dimensional, then, as we show in Remark \ref{r_3.2}, assumptions (SF) and
 (LCR)(b) are met if the transition probabilities
 $Q_l$ have the  form \eqref{finiteQ}, where the densities $\rho_l$ are uniformly Lipschitz
 continuous functions on $\bKK\times\KK$.
 If $\rho_l(\xxi, 0) \ge\delta>0$ for all $\xxi$ and $l$, then assumption (RZ) also obviously holds with $s=0$. By Corollary~\ref{cor1} such transition
 probabilities define a unique in distribution random process $\eta$. So in this case the restrictions,
 imposed on the process, hold for bounded  processes $\{\eta_k\}$
 in $E$ which are regular (in the sense that relations \eqref{finiteQ} hold with uniformly Lipschitz densities  $\rho_l$), and
 are non-degenerate (in the sense that   $\rho_l(\xxi, 0) \ge\delta>0$ for all $\xxi$ and $l$).
 \medskip

 Now let $E$ be an infinite-dimensional   Hilbert space with a Hilbert basis $\{e_1, e_2, \dots\}$. For any $n\ge1$ and any $n$ vectors
 $e_{j_1}, \dots, e_{j_n}$ let $F_n$ be their linear span, and $F_n^\perp$ be the  orthogonal complement to $F_n$ in $E$, so
 $E= F_n \oplus F_n^\perp$.  We will write vectors $y\in E$ as $(y_F, y_F^\perp)$, and use the natural projections
 $ P_F: E\to F_n$, $\mathsf P_{F}^{\perp}: E \to F_n^\perp$.

 \subsubsection{ $\dim H<\infty$.} \label{s_6.1.1}

For assumption (LCR) to hold we must have that the map  $D_u S(u,\eta): E\to H$ is surjective for all
 $(u, \eta) \in X\times \KK$. If this property is valid, then for each $(u, \eta) \in X\times \KK$ exists a space
  $F_n$, $n=n(u, \eta)$ as  above,
 such that $D_u S(u,\eta): F_n\to H$ is a surjection. Using the $C^2$-smoothness of $S$ and compactness of $X\times \KK$ we derive from
 here that
 \be\label{must_have}
 D_u S(u, \eta) : F_N \to H \; \; \text{ is a surjection for all $(u, \eta)$ from a neighbourhood $O$ of $X\times \KK$},
 \ee
 for some $N$ which depends only on the system. Below we abbreviate this $F_N$ to $F$.

  We disintegrate measures $Q_l(\xxi; \cdot)$ with respect to the projection $\mathsf P_{F}^{\perp}$:
 \be\label{b1}
 Q_l(\xxi; dy) = Q_{lF}^\perp(\xxi; dy_F^\perp) Q_{lF}(\xxi, y_F^\perp; dy_F),
 \ee
 where $Q_{lF}^\perp(\xxi; \cdot) = (P_F)_* Q_l(\xxi; \cdot)$, and assume that
\smallskip

 a) the measures $Q_{lF}^\perp(\xxi; \cdot) $ satisfy
 $
 \| Q_{lF}^\perp(\xxi_1; \cdot)  - Q_{lF}^\perp(\xxi_2; \cdot) \|_{var} \le C d(\xxi_1, \xxi_2),
 $
 and for each $\delta>0$ we have that
 $
 Q_{lF}^\perp(\xxi; B_{F_N^\perp}(\delta) )\ge \eps_\delta >0$ for all $\xxi\in\bKK$ and all $ l.
 $

 b) For all $\xxi\in \bKK$, $y_F^\perp \in \mathsf P_{F}^{\perp} (\KK)$ and  $l \in \Z$ we have that
 $
 Q_{lF}(\xxi, y_F^\perp; dy_F) =  \rho_l(\xxi, y_F^\perp, y_F) l_F(dy_F),
 $
 where the function $\rho_l$ are uniformly Lipschitz. Moreover,
 $
 \rho_l(\xxi, y_F^\perp, 0) \ge\gamma>0
 $
 for all $\xxi$ and $y_F^\perp$.

\begin{proposition}\label{p_6.1}
If for some space $F_N$ as above the probabilities $Q_l$ decompose as in \eqref{b1} and components of this decomposition satisfy
 a) and b), then the transition probabilities $Q_l$ satisfy Hypotheses~\hyperlink{SF}{\rm(SF)},~\hyperlink{RZ}{\rm(RZ)}, and~\hyperlink{LCR}{\rm(LCR)(b)} holds with $F=F_N$.
\end{proposition}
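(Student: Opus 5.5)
The plan is to verify the three hypotheses one at a time, in the order (LCR)(b), (SF), (RZ), working directly with the disintegration \eqref{b1} and $F=F_N$. I do not expect to need any earlier result beyond the formula for the conditional laws of $(\eta_{l+1},\dots,\eta_{l+m})$ given $\eeta_l$ as products of the kernels $Q_r$.

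\textbf{(LCR)(b).} This is essentially assumption b) restated. The only point to check is the support requirement: since $Q_l(\xxi;\cdot)$ is supported on $\KK$, we may set $\rho_l=0$ off $\KK$, i.e. take $\rho_{lF}=\rho_l\mathbf 1_{\KK}$. If the resulting function is not Lipschitz, I would instead read b) as already giving Lipschitz densities vanishing outside $\KK$; this is what ``supported by $\bKK\times\KK$'' means in (LCR).

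\textbf{(SF).} For $\xxi_1,\xxi_2\in\bKK$ I would write
$$Q_l(\xxi_1)-Q_l(\xxi_2)=\bigl(Q^\perp_{lF}(\xxi_1)-Q^\perp_{lF}(\xxi_2)\bigr)(dy^\perp_F)\,Q_{lF}(\xxi_1,y_F^\perp;dy_F)+Q^\perp_{lF}(\xxi_2;dy_F^\perp)\bigl(Q_{lF}(\xxi_1,y^\perp_F)-Q_{lF}(\xxi_2,y^\perp_F)\bigr)(dy_F).$$
Testing against $f$ with $\|f\|_\infty\le1$ handles the two terms separately.
\begin{itemize}
\item The first term is bounded by $2\|Q^\perp_{lF}(\xxi_1)-Q^\perp_{lF}(\xxi_2)\|_{var}\le 2C\,\dd(\xxi_1,\xxi_2)$, using a).
\item The second term is bounded by $\sup_{y_F^\perp}\int_F|\rho_l(\xxi_1,y_F^\perp,y_F)-\rho_l(\xxi_2,y_F^\perp,y_F)|\,\ell_F(dy_F)$. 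The densities are uniformly Lipschitz and vanish outside the compact set $\mathsf P_F\KK$, so this is at most $\mathrm{Lip}\cdot\ell_F(\mathsf P_F\KK)\,\dd(\xxi_1,\xxi_2)$.
\end{itemize}
Hence (SF) holds with an $l$-independent constant.

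\textbf{(RZ).} I will take $s=0$ and produce a uniform one-step lower bound. Given $\delta>0$, I first show
$$\inf_{l}\inf_{\xxi}Q_l(\xxi;B_E(\delta))\ge c_\delta>0.$$
Since $|y|\le|y_F|+|y_F^\perp|$, we have
$$Q_l(\xxi;B_E(\delta))\ge\int_{B_{F^\perp}(\delta/2)}Q_{lF}\bigl(\xxi,y^\perp_F;B_F(\delta/2)\bigr)\,Q^\perp_{lF}(\xxi;dy^\perp_F).$$
Let $\Lambda$ be the uniform Lipschitz constant of the $\rho_l$. The bound $\rho_l(\xxi,y^\perp_F,0)\ge\gamma$ gives $\rho_l\ge\gamma/2$ on $B_F(r)$ with $r=\min(\delta/2,\gamma/(2\Lambda))$. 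Therefore the inner measure is at least $\tfrac\gamma2\ell_F(B_F(r))$. Combined with a), this yields $c_\delta=\tfrac\gamma2\ell_F(B_F(r))\,\eps_{\delta/2}$.

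Then, for any $n$, the product formula for conditional laws gives
$$\IP\{|\eta_k|<\delta,\ k=l+1,\dots,l+n\mid\eeta_l=\xxi\}=\int\!\cdots\!\int \mathbf 1_{B_E(\delta)}(y_1)\cdots\mathbf 1_{B_E(\delta)}(y_n)\,Q_l(\xxi;dy_1)\cdots Q_{l+n-1}(\xxi_{n-1};dy_n).$$
Integrating from the innermost variable outward and applying the one-step bound each time shows this is at least $c_\delta^n$, uniformly in $l$ and $\xxi$. This gives \eqref{RZ} with $s=0$.

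\textbf{Main obstacle.} The delicate step is the uniform lower bound in (RZ). It requires the lower bound on $\rho_l$ near $y_F=0$ to hold uniformly in $y^\perp_F$ (which b) provides). It also requires $\mathsf P_F\KK$ to contain a neighbourhood of $0$ in $F$ of full measure, which is implicit in b), since a positive density at $0$ forces this.
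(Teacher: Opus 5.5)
Your proposal is correct and follows the paper's proof. The paper proves (SF) with the same two-term splitting of $Q_l(\xxi_1;\cdot)-Q_l(\xxi_2;\cdot)$ through the disintegration \eqref{b1}; it pairs $Q^\perp_{lF}(\xxi_1;\cdot)$ with the difference of conditionals rather than $Q^\perp_{lF}(\xxi_2;\cdot)$, which is immaterial. It gets (RZ) with $s=0$ from the lower bounds in a) and b), and reads (LCR)(b) off from b). Your version just makes explicit what the paper leaves implicit: the factor $\ell_F(\mathsf P_F\KK)$ in the density estimate, and the iteration of the one-step bound $c_\delta$ over $n$ steps via the product formula for conditional laws.
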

\begin{proof}
To establish (SF) we take any Borel set $B\subset \KK$. Let $\xxi_1, \xxi_2 \in \bKK$ and $d(\xxi_1, \xxi_2) =:d$. Then, abbreviating
$Q_l$ to $Q$, $Q_{lF}$ to $Q_F$ etc, have that
\[ \begin{split}
Q(\xxi_1; B) - Q(\xxi_2; B)  = \int Q_F^\perp(\xxi_1; dy_F^\perp) \big( Q_F(\xxi_1, y_F^\perp; B) - Q_F(\xxi_2, y_F^\perp; B) \big)\\
- \int\big( Q_F^\perp (\xxi_1; dy_F^\perp) - Q_F^\perp (\xxi_2; dy_F^\perp) \big) Q_F(\xxi_2, y_F^\perp; B)
\end{split}
\]
(we regard a measure $Q_F(\xxi, y_F^\perp; \cdot)$ both as a measure on $F_N$ and a measure on the layer
$\{ z \in \KK \mid \mathsf P_{F}^{\perp} z = y_F^\perp\}$.)  Due to b), in the first integral the integrand is bounded by $ Cd$. Since
$
y_F^\perp \mapsto Q_F(\xxi_2, y_F^\perp; B)
$
is a Borel function, bounded by one, then by a) the second integral also is bounded by $Cd$. This establishes (SF).

Property (RZ) with $s=0$ is a consequence of the second parts of a) and b), while (LCR)(b) is a part of b).
\end{proof}

 By Corollary \ref{cor1} the system of transition probabilities $Q_l$ in the proposition above defines a process $\eta_k$, uniquely in
 distribution. If map $S$ in \eqref{RDSN} satisfies \eqref{must_have}, then (LCR)(b) also holds. So if $S$ meets  \eqref{must_have} with
 some $F_N$, then Hypotheses~\hyperlink{SF}{\rm(SF)},~\hyperlink{RZ}{\rm(RZ)}, and~\hyperlink{LCR}{\rm(LCR)(b)} make a mild restriction on the process $\eta_k$, and there are plenty of processes
 which satisfy them.

 \subsubsection{ $\dim H=\infty$.} \label{s_6.1.2}

 This case is treated in Section~\ref{s_4}, and now we will discuss processes, satisfying Hypotheses~\hyperlink{SF}{\rm(SF)},~\hyperlink{RZ}{\rm(RZ)}, and~\hyperlink{DLP}{\rm(DLP)}, imposed
 there.

Let space $E$ be as above.  For any $l\in \Z$, let $\{\zeta_{l m}\}_{m\ge1}$ be a sequence of
 independent scalar random variables such that $|\zeta_{l m}| \le a_m$ for all $l, m$ and $\om$,
  where $\{a_m\}_{m\ge 1}$ is an $l^2-$sequence, i.e. $\sum a_m^2<\infty$.
 Assume that the laws of these random variables  possess Lipschitz continuous densities~$d_{l m}:\R\to\R_+$
 with respect to Lebesgue measure on~$\R$, such that $d_{lm}(0)>0$ for each $l$ and $m$.
 The random series
$
\zeta_{l}:=\sum_{m=1}^\infty \zeta_{l m} e_m
$
converges  in~$E$ for every $\omega$, and we  denote by $\nu_l\in\PP(E)$ the law of $\zeta_l$. Then for each $l$
supp$\, \nu_l \subset \KK$, where
$
\KK =\{\sum x_m e_m \mid |x_m| \le a_m\}
$
is a compact subset of $E$.

\begin{proposition}\label{P1}
Let the above hypotheses be fulfilled and the set $\bKK=\KK^{\Z_-}$ be given the distance~$\dd$
as in~\eqref{distance-EE}. Let $g_l:\boldsymbol{\KK}\times E\to\R_+$ be a Lipschitz continuous
function such that $g_l(\xxi,y)\ge c>0$ for all $\xxi\in\boldsymbol{\KK}$ and $y\in E$, and let
$m_l(\xxi)=\int_{E} g_l(\xxi,y)\nu_l(\dd y)$. Then the transition probabilities
\begin{equation}\label{mq}
Q_l(\xxi;\dd y):=\rho_l(\xxi,y)\nu_l(\dd y), \quad \rho_l(\xxi,y):=m_l(\xxi)^{-1}g_l(\xxi,y),
\end{equation}
satisfy Hypotheses~\hyperlink{SF}{\rm(SF)}  and \hyperlink{LCR}{\rm(LCR)(b)}
with $F=F_n$, where $n\ge1$ is any, and  $F_n$ is the vector span of some $n$ vectors  $e_{j_1}, \dots, e_{j_n}$.
The density of the
 conditional measure
 $Q_{l\, n}(\xxi,y_n^\bot;\cdot) = Q_{l F_n}(\xxi,y_n^\bot;\cdot)$,
 entering~\eqref{24}, is given by the formula
\begin{equation}\label{cd}
\rho_{ l n}(\xxi,y_n^\bot,y_n)=\frac{g_l(\xxi,y_n,y_n^\bot)D_{l\, n}(y_n)}{\int_{F_n}g_l(\xxi,z,y_n^\bot)\nu_{l\, n}(\dd z)},
\end{equation}
where $\nu_{l\, n}$ is the projection of~$\nu_l$ to~$F_n$, and $D_{l\, n}$ stands for the product of
 functions $d_{l j_1},\dots,d_{l\, j_n}$.
Moreover, Hypothesis~\hyperlink{RZ}{\rm(RZ)} with $s=0$ holds for the family $\{Q_l(\xxi;\cdot),\xxi\in\boldsymbol{\KK}\}$.
\end{proposition}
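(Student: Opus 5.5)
The proof will be a direct verification of the three hypotheses, with the product structure of $\nu_l$ doing most of the work. First I will record the basic bounds on $m_l$. Since $g_l\ge c$ and $g_l$ is Lipschitz on the bounded set $\bKK\times\KK$, it is bounded above by some $C_0$. Hence $c\le m_l(\xxi)\le C_0$. Also $|m_l(\xxi_1)-m_l(\xxi_2)|\le \Lip(g_l)\,\dd(\xxi_1,\xxi_2)$. It follows that $\rho_l=g_l/m_l$ is bounded, bounded below by $c/C_0$, and Lipschitz in $(\xxi,y)$, with constants independent of $l$. Here, as in the statement, the Lipschitz constants of the $g_l$ are taken to be uniform in $l$.

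(SF) is then immediate from \eqref{mq}. For two pasts $\xxi_1,\xxi_2$,
\[
\|Q_l(\xxi_1;\cdot)-Q_l(\xxi_2;\cdot)\|_{var}=\tfrac12\int|\rho_l(\xxi_1,y)-\rho_l(\xxi_2,y)|\,\nu_l(\dd y)\le C\,\dd(\xxi_1,\xxi_2).
\]

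For (LCR)(b) I will use the structure of $\nu_l$. The coordinates $\zeta_{lm}$ are independent, so $\nu_l=\nu_{l\,n}\otimes\nu_{l\,n}^\perp$, where $\nu_{l\,n}(\dd y_n)=D_{l\,n}(y_n)\ell_n(\dd y_n)$ and $\nu^\perp_{l\,n}$ is the law of the remaining coordinates. Disintegrating $\rho_l\,\nu_l$ along $\mathsf P_n^\perp$ gives
\[
Q^\perp_{l\,n}(\xxi;\dd y_n^\perp)=m_l^{-1}\Big(\int_{F_n}g_l(\xxi,z,y_n^\perp)\,\nu_{l\,n}(\dd z)\Big)\nu^\perp_{l\,n}(\dd y_n^\perp).
\]
The conditional density is then exactly \eqref{cd}; this is checked by Fubini, since the product of the two factors reproduces $\rho_l\,\nu_l$.

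The Lipschitz property of $\rho_{l\,n}$ in \eqref{cd} is the one place needing care. The denominator is at least $c$. It is Lipschitz in $(\xxi,y_n^\perp)$ because $g_l$ is. The numerator is a product of bounded Lipschitz functions, because each $d_{lj}$ is Lipschitz and, on the support $[-a_j,a_j]$, also bounded. It remains to deal with points outside $\KK$: I will extend by zero, using the support condition $\rho_{l\,n}$ supported in $\bKK\times\KK$. Continuity across $\partial\KK$ needs the densities to vanish at $\pm a_j$. I take this to be implicit: a density that is Lipschitz on $\R$ and supported in $[-a_j,a_j]$ must vanish there. Uniformity of all these constants in $l$ is again part of the hypotheses.

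Finally, (RZ) with $s=0$. Fix $\delta>0$ and $n$. Choose $M$ with $\sum_{m>M}a_m^2<\delta^2/4$. Then $|\zeta_l|<\delta$ whenever $|\zeta_{lm}|<\delta/(2\sqrt M)$ for all $m\le M$. By the continuity of $d_{lm}$ and $d_{lm}(0)>0$, this event has $\nu_l$-probability $p>0$. Since $\rho_l\ge c/C_0$, we get $Q_l(\xxi;B_E(\delta))\ge (c/C_0)\,p$ for every past $\xxi$. Iterating the transition probabilities over $n$ consecutive steps yields the bound $((c/C_0)p)^n>0$.

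I expect the main obstacle to be uniformity in $l$. The lower bound for $p$ needs $\inf_l \nu_l$ of that small box to be positive, and this does not follow from $d_{lm}(0)>0$ pointwise in $l$. I would assume, or read into the hypotheses, that $\inf_l d_{lm}(0)>0$ for each $m$ and that the Lipschitz constants are uniform in $l$; with that, $p$ is $l$-independent.
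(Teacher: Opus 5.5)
Your proposal is correct and follows the paper's proof. (SF) is read off from the density representation \eqref{mq} and the analogue of \eqref{20}. The conditional density \eqref{cd} comes from disintegrating $\rho_l\,\nu_l$ using the product structure $\nu_l=\nu_{l\,n}\otimes\nu_{l\,n}^\perp$; the paper does the same computation with test functions $f(y_n)g(y_n^\perp)$ and Stone--Weierstrass rather than Fubini. (RZ) with $s=0$ follows from $d_{lm}(0)>0$ together with $\sum_m a_m^2<\infty$.

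Your extra bookkeeping is justified, and the paper's proof uses these points tacitly. This includes the Lipschitz bound on $D_{l\,n}$. It also includes the need for $l$-uniform Lipschitz constants of $g_l, d_{lm}$ and for $\inf_l d_{lm}(0)>0$, which give the $l$-independent constants in (SF), (LCR)(b) and the $\inf_l$ in (RZ).
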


\begin{proof}
The validity of Hypothesis~\hyperlink{SF}{\rm(SF)} follows immediately from \eqref{mq} and \eqref{20}.
In view of~\eqref{mq}, for any integer $n\ge1$ and arbitrary bounded continuous functions $f:F_n\to\R$ and $g:F_n^\bot \to\R$, we can write
\begin{multline} \label{fg}
	\int_{E}f(y_n)g(y_n^\bot)Q_l(\xxi;\dd y)
	=\int_{F_n^\bot}\biggl\{\int_{F_n}f(y_n)g(y_n^\bot)\rho_l(\xxi,y_n,y_n^\bot)D_{l\, n}(y_n)\ell_n(\dd y_n)\biggr\}\nu_{l\, n}^\bot(\dd y_n^\bot),	
\end{multline}
where $\nu_{l\, n}^\bot$ is the projection of~$\nu_l$ to~$F_n^\bot$, and $\ell_n$ stands for  the Lebesgue measure on~$F_n$.
Relation \eqref{fg} remains true for linear combinations of various functions $f(y_n)g(y_n^\bot),$ so by the Stone-Weierstrass theorem, it holds true for all continuous functions on $\KK.$
On the other hand, the projection of~$Q_l(\xxi;\cdot)$ to~$F_n^\bot$ is given by
$$
Q_{l\, n}^\bot(\xxi;\dd y_n^\bot)=\biggl\{\int_{F_n}\rho_l(\xxi,z,y_n^\bot)\nu_{l\, n}(\dd z)\biggr\}\nu_{l\, n}^\bot(\dd y_n^\bot).
$$
Combining this with~\eqref{fg}, we arrive at relation~\eqref{cd} for the density of the measure~$Q_l(\xxi;y_n^\bot,\cdot)$. The Lipschitz continuity of~$ \rho_{l\, n}$ follows
 from similar property of the function~$g_l$ and the inequality $g_l\ge c$.

 Hypothesis~\hyperlink{RZ}{\rm(RZ)} with $s=0$  follows from the fact that $d_{lm}(0)>0$ for each $l$ and $m$, since the sum
 $\sum a_m^2$ is finite.
\end{proof}

Again, by Corollary \ref{cor1} the system of transition probabilities $Q_l$ in Proposition \ref{P1}  defines a process $\eta_k$, uniquely in
 distribution. This provides us with a large class of processes $\eta_k$
 which satisfy ~\hyperlink{SF}{\rm(SF)},~\hyperlink{RZ}{\rm(RZ)}, as well as ~\hyperlink{LCR}{\rm(LCR)(b)} for  spaces $F_N$ with any $N$. So they
 satisfy \hyperlink{DLP}{\rm(DLP)}.

\subsection{Random processes with $L^{2}_{loc}-$trajectories.}\label{s_6.1}

For a given process $\eta(t)$ with $L^2_{loc}$-trajectories we define a process $\{\eta_k \in\hat E, k\in \Z\}$, $\hat E = L^2(I, E)$, as in Introduction:
$
\eta_k = \eta\!\mid_{I_k}$, $I_k =[k-1, k).
$
Let $\vp_1, \vp_2, \dots$ be some Hilbert basis of $L^2(I)$, and as before, $\{e_1, e_2, \dots\}$ be a Hilbert basis of $E$. Then
$\{ \vp_i \otimes e_j,\, i, j \ge1\}$, is a Hilbert basis of $\hat E$. Replacing in the constructions in Section~\ref{A1}
 basis $\{e_j\}$ with the basis $\{ \vp_i \otimes e_j\}$
we obtain transition probabilities $Q_l$ from $\bKK$ to $\KK$ as in \eqref{mq}, where now $\KK\subset \hat E$ and $\bKK \subset {\boldsymbol {\hat E}}$. Next by applying Corollary~\ref{cor1} we obtain
a process $\{\eta_k \in \hat E, k\in\Z\}$, corresponding to these transition probabilities, and next  define a process $\eta(t) \in E$ by relation
\begin{equation}\label{28}
	\eta(t):=\eta_k(t-k+1)\quad\mbox{for $t\in I_{k}$, $k\in\Z$}.
\end{equation}
Thus we arrive  at the following result:

\begin{proposition}\label{P3}
Any set of transition probabilities $Q_l$ as above defines a process $\eta(t) \in E$ with $L^2_{loc}$-trajectories such that for the corresponding process $\{\eta_k\}$ in
$\hat E$ its conditional distributions  are $Q_l$ (see \eqref{cond}). This 
process satisfies Hypotheses~\hyperlink{SF}{\rm(SF)},~\hyperlink{RZ}{\rm(RZ)}, and satisfy \hyperlink{LCR}{\rm(LCR)(b)}
with $F=F_n$, equal to the linear span of any $n$ vectors $\varphi_l\otimes e_m$, for arbitrary $n\ge 1$.
\end{proposition}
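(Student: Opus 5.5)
The proof will proceed by transporting the constructions of Section~\ref{A1} from the space $E$ to $\hat E=L^2(I,E)$ and then invoking Corollary~\ref{cor1}. First, I fix the Hilbert basis $\{\vp_i\otimes e_j\}_{i,j\ge1}$ of $\hat E$ and enumerate it as a single sequence $\{\hat e_m\}_{m\ge1}$. Independent scalar variables $\zeta_{lm}$ with $|\zeta_{lm}|\le a_m$, $\sum a_m^2<\infty$, Lipschitz densities $d_{lm}$ and $d_{lm}(0)>0$ are taken exactly as in Section~\ref{s_6.1.2}. This gives a compact set $\KK=\{\sum x_m\hat e_m: |x_m|\le a_m\}\subset\hat E$ and measures $\nu_l\in\PP(\KK)$. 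With a Lipschitz $g_l\ge c>0$ on $\bKK\times\hat E$, the transition probabilities $Q_l$ are then defined by \eqref{mq}. Proposition~\ref{P1} applies verbatim with $E$ replaced by $\hat E$, since it uses only that $\{\hat e_m\}$ is a Hilbert basis. It yields (SF), (RZ) with $s=0$, and (LCR)(b) for $F=F_n$ spanned by any $n$ basis vectors, which here are any $n$ vectors $\vp_i\otimes e_j$. This step requires a uniform bound on the Lipschitz constants in $l$. I would therefore assume, as is implicit in ``as above'', that the $g_l$ are uniformly Lipschitz and uniformly bounded below, and that the $d_{lm}$ are uniformly Lipschitz in $l$, so that the densities \eqref{cd} are uniformly Lipschitz.

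Second, I apply Corollary~\ref{cor1} to the family $\{Q_l\}$, which satisfies (SF) and (RZ). This gives a process $\{\eta_k\in\KK\subset\hat E\}_{k\in\Z}$, unique in distribution, whose regular conditional distributions \eqref{cond} are the $Q_l$. Since these conditional distributions are exactly the $Q_l$, the hypotheses (SF), (RZ), (LCR)(b) hold for this process, because they are formulated purely in terms of the $Q_l$ (cf.\ Remark~\ref{Re1}).

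Third, I glue the pieces via \eqref{28}, setting $\eta(t)=\eta_k(t-k+1)$ for $t\in I_k$. Each $\eta_k$ is an element of $L^2(I,E)$, that is, a class of functions, so I need a jointly measurable version. I would choose a measurable map $L^2(I,E)\to$ (Borel functions $I\to E$) that picks a representative, for instance via $\limsup$ of Steklov averages. Applying it to each $\eta_k^\omega$ gives a process $\eta(t,\omega)$ measurable in $(t,\omega)$, and every trajectory lies in $L^2_{loc}(\R,E)$ because $\|\eta\|_{L^2(I_k,E)}=\|\eta_k\|_{\hat E}\le(\sum a_m^2)^{1/2}$. By construction $\eta|_{I_k}$, identified with $\hat E$, equals $\eta_k$. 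So the discrete process associated with $\eta(t)$ coincides with $\{\eta_k\}$, and its conditional distributions are the $Q_l$.

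The main obstacle is bookkeeping rather than analysis. One point is the uniformity in $l$ of the Lipschitz constants needed for (SF) and (LCR)(b). The other is the measurable choice of representatives in the gluing step. Neither affects the laws, so the remaining assertions follow directly from Proposition~\ref{P1} and Corollary~\ref{cor1}.
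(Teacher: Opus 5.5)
Your proposal is correct and follows the paper's own route: rerun the construction of Proposition~\ref{P1} in $\hat E=L^2(I,E)$ with the basis $\{\vp_i\otimes e_j\}$, apply Corollary~\ref{cor1} to obtain $\{\eta_k\}$, and glue the pieces via \eqref{28}. Your remarks on uniformity in $l$ and on measurable gluing cover details the paper leaves implicit. One caveat: since the infimum in \hyperlink{RZ}{(RZ)} is also over $l$, you additionally need the positivity of the $d_{lm}$ near $0$ to be uniform in $l$, not merely $d_{lm}(0)>0$ for each $l$.
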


So there are many   process $\eta(t) \in E$ with $L^2_{loc}$-trajectories such that the corresponding processes $\{\eta_k\in \hat E\}$
  satisfies Hypotheses~\hyperlink{SF}{\rm(SF)},~\hyperlink{RZ}{\rm(RZ)}, and~\hyperlink{LCR}{\rm(LCR)(b)} with
  $\dim F =n$,  where  $n\ge 1$ is any.

\subsection{Random processes with continuous trajectories.}\label{s_6.2}
Consider  Sobolev space $H^{1}(0,1)=:H^1$, provided with the norm $\|\cdot\|_{1}$,
where $\|u\|_{1}^{2}=\|u\|^{2}+\|u_x\|^{2}$ and $\|\cdot\|$ is the $L^{2}-$norm.
Let us denote $H^1_0(0,1)=:H^1_0$.  This is a subspace of $H^1$ of codimension two. Consider two real functions on $I$,
$\phi_1(t)=t$ and $\phi_2(t)= 1-t$. Then
$$
\phi_1(0)=\phi_2(1) = 0, \qquad \phi_1(1)=\phi_2(0) = 1.
$$
So if $\xi(t) \in H^1$, then the function $\xi - {\phi}_2 \xi(0) - {\phi}_1 \xi(1)$ is an $H^1$-function which vanishes at $t=0$ and $t=1$.
Thus
\be\label{b2}
\xi = {\phi}_2 \xi(0) + {\phi}_1 \xi(1)+ \xi^0, \qquad \xi^0 \in H^1_0.
\ee

Let $Y$ be a Hilbert space of finite or infinite dimension with a Hilbert basis $\{e_j\}_{j\ge 1}$. Consider the spaces
$$
\HH^1 = H^1(I,Y) = H^1 \otimes Y, \qquad \HH^1_0= H_0^1(I,Y) = H^1_0 \otimes Y.
$$
From \eqref{b2} it follows that for any $\eta(t) \in\HH^1$,
$
\eta  = {\phi}_2 \eta(0) + {\phi}_1 \eta(1)+ \eta^0$, where  $\eta^0 \in \HH^1_0.
$
This relation decomposes space $\HH^1$ to a (non-orthogonal) direct sum
\be\label{decomp}
\HH^1 = H^1\otimes Y = \big( {\phi}_2\otimes Y\big)\oplus \big({\phi}_1\otimes Y\big)\oplus \HH^1_0.
\ee
So if $\eta(t) \in Y$ is a random process with $H^1_{loc}$-trajectories and $\eta_k = \eta\!\mid_{I_k} \in \HH^1$, $I_k= [k-1, k)$, and
$\nu_k= \eta(k) \in Y$, then
\be\label{b4}
\eta_k = \nu_{k-1} \phi_1 + \nu_k \phi_2 + \eta_k^0, \qquad \eta_k^0 \in \HH_0^1.
\ee
Other way round, let $\{\nu_k\in Y\}_{k\in\Z}$ and $\{\eta_k^0\in \HH^1_0\}_{k\in\Z}$ be given process. If we define $\eta_k\in \HH^1$
by relation \eqref{b4}, then the process $\eta(t)$, defined as
\begin{equation}\label{b5}
\eta(t):=\eta_{k}(t-k+1),~t\in I_k,~\forall k\in \mathbb{Z},
\end{equation}
is continuous and piecewise $H^1$-smooth. So its trajectories are $H^1_{loc}$-curves. Finally, let us set
\be\label{b6}
\hat\eta_k =  \nu_k \phi_2 + \eta_k^0 = \eta_k- \nu_{k-1} {\phi}_1 \in E_0 := \big( {\phi}_2\otimes Y\big) \oplus \HH_0^1 \subset \HH^1,
\qquad k \in \Z.
\ee

Let $\{\vp_m\}_{ m\ge1}$ be a Hilbert basis of $H^1_0$, proportional to the sine-basis. Then
$$
\{{\phi}_2 \otimes e_j\}_{{ j\ge1}} \cup \{\vp_i \otimes e_j\}_{i, j\ge1} =: \{\xi_j\}_{j\ge1},
$$
is a  (non-orthogonal) basis of $E_0$.\footnote{This basis is not Hilbert, but it makes no difference for what follows since it is ``as good as
a Hilbert basis  of $E$", as it become one  if we change the  norm in $E$ to an equivalent one.}
 We impose the following restriction on an $H^1_{loc}$-process $\eta(t)\in Y$:

\begin{description}\sl
\item [\hypertarget{H}{(H)}]
Random processes  $\hat\eta_k\in E_0$ as in \eqref{b6}
 is such that $\hat\eta_k^\om \in \KK$, where $\KK$ is a compact  subset of $E_0$. Moreover, it
satisfies Hypotheses~\hyperlink{SF}{\rm(SF)},~\hyperlink{RZ}{\rm(RZ)},  \hyperlink{LCR}{\rm(LCR)(b)} with respect to the basis
$\{\xi_j\}$ holds for any $n$.
\end{description}

We can obtain plenty of processes $\eta(t)$ which satisfy this hypothesis, using relation \eqref{b6}, where we use processes
$\{\nu_k \in Y\}$ and $\{ \eta_k^0 \in \HH_0^1\}$ as in Proposition~\ref{p_6.1} if $\dim H<\infty$, or as in  Proposition~\ref{P1} if
$\dim H= \infty$.

\subsection{Applications to randomly forced ordinary differential equations}\label{s_6.3}

Consider an ODE in $H:=\R^n$, stirred by an $Y-$valued random process $\eta (t)$, where $Y$ is a finite dimensional space,
\begin{equation}\label{LZ3}
\dot x(t)=f(x(t),\eta (t)), \quad x(t) \in H,
\end{equation}
supplemented with the initial condition
\begin{equation}\label{IC-LZ3}
	x(0)=x_{0},
\end{equation}
where $f:H\times Y\rightarrow H$ is a $C^2-$smooth map. A solution $x(t)$ of system \eqref{LZ3}, \eqref{IC-LZ3} is unique, if
it exists, and we denote by $S^t(\cdot,\eta):H\rightarrow H$, the map which takes an initial condition $x_0$ to the value
at time $t$ of the solution  (if such a solution exists).

\subsubsection{Equations, driven by
  random forces  with $L_{loc}^2-$trajectories.}
Suppose first  that  $\eta (t)$ in \eqref{LZ3} is a random process
with $L_{loc}^2-$trajectories, constructed in Proposition \ref{P3}, and let $\eta _k \in L^2(I, Y), \ k\in \Z$, be
 a restriction of $\eta $ to interval $I_k:=[k-1,k).$ Then, assuming that
equation \eqref{LZ3} with  such a process $\eta $ is well posed,  and denoting
 $u_k=x(k), 0\le k\in\Z,$ we rewrite equation for $\{u_k\}_{k\ge 0}$ as a discrete time random system
in $X:$
\begin{equation}\label{LZ8}
u_k=S(u_{k-1},\eta_k ),~k\ge 1,~~u_0=x,
\end{equation}
where $S(u,\eta)=S^1(u,\eta)$. This is a system \eqref{RDSN}, where $E=L^2(I,Y)$.
It turns out that Theorem~\ref{mixing} is well applicable to it if function $f$ in \eqref{LZ3} is affine in $\eta $.
See \cite[Section 5]{KS-2025}, where such  application is given for the case when processes $\eta (t)$ is statistically
1-periodic.

In  next subsection we show that our results apply to prove the mixing for general equations \eqref{LZ3} if there
 $\eta (t)$ are continuous in time processes as in Section~\ref{s_6.2}.

\subsubsection{Equations,  driven by   random forces   with $H^1_{loc}$-trajectories.}\label{s_6.3.2}
 Let  random force $\eta $ in
equation  \eqref{LZ3} be a process with continuous trajectories, satisfying Hypothesis~\hyperlink{H}{\rm(H)} in Section~6.2.
We set
$$E:=E_0\otimes Y,$$
where space $E_0$ is defined in \eqref{b6}. We recall that for each $k$
\begin{equation}\label{Z4}
\eta_k (t)=\nu_{k-1} \phi_1(t)+\nu_{k} \phi_2(t)+\eta_k^{0}(t)\in H^1(I,Y)=\HH^1, \quad t\in I.
\end{equation}
 By Hypothesis~\hyperlink{H}{\rm(H)},  for all $k$ and $\omega$  we have that
\begin{equation}\label{Z6}
\nu_k\in \bar{B}_{Y}(r_0) =: B, \quad  \eta_k \in  \bar B_{\mathcal{H}^1}(r)=: \mathcal{B},
\quad \eta_k^{0}\in \KK^0\subset H_0^1(I,Y)=: \HH^1_0,
\end{equation}
for some $r_0, r>0$ and some compact set $\KK^0\subset \HH^1_0$.

 We  impose the following regularity assumption on the equation:
\begin{description}\sl
\item [\hypertarget{K}{(K)}]
If in \eqref{LZ3} $\eta \in H^1_{loc}(\R;Y)$ and
 $\|\eta _{|I_k} \|_{\HH^1}\le r$ for each $k$, then for every  $x_0\in H$,
a solution $x(t)$ for \eqref{LZ3}, \eqref{IC-LZ3} exists for all $t\ge 0$. Moreover, there exists an  $R>0$ such that
\par
a) if $x_0\in \bar{B}_{R}(H)=:X$, then $x(t)\in X$ for all $t\ge 0$;
\par
b) for each $x_0\in H$, there exists $T=T(\|x_0\|_H)\geq 0$ such that $x(t)\in X$ for $t\ge T$;
\par
c) for any ball $B$ in $H$, it holds that
\begin{equation}\label{converge0}
	\sup_{x_{0}\in B}\|S^t(x_{0},0)\bigr\|_H\to0\quad\mbox{as} \quad t\to+\infty.
\end{equation}
\end{description}

\begin{remark}\label{rem1}
Hypothesis~\hyperlink{K}{\rm(K)} holds if equation
 \eqref{LZ3} admits a suitable Lyapunov function. Namely, if there exists a $C^1-$function $V$ on $H$ such that
\par
i) $V(x)\rightarrow+\infty$ as $\|x\|_H \rightarrow+\infty$.
\par
ii) There exist constants $V_0, \delta>0$ such that
\begin{equation*}
(\nabla V(x),f(x,\eta))\le -\delta \quad \text{ if $\eta\in \mathcal{B}$ and $V(x)\geq V_0$},
\end{equation*}
where $(\cdot,\cdot)$ is the inner product in $H$. 
\par
iii) If $V(x)\leq V_0,$ we have $(x,f(x,0))\leq -\mu \|x\|_H^2,\;  \mu>0$.

Indeed, let us set $Q:=\{x\in H: V(x)\leq V_0\}$. By i) this set is bounded. Since for $x\in H\backslash Q$, we have
\begin{equation}\label{21}
\frac{d}{dt}V(x(t))=(\nabla V(x),f(x,\eta))\le -\delta,
\end{equation}
then starting from $x_0\in Q$, trajectory $x(t)$ will never exit $Q$, so a) holds. If $x_0\in H\backslash Q$,
then $V(x_0) >V_0$, and \eqref{21} holds while $x(t)$ stays outside $Q$. Thus $x(t)$ will enter $Q$
by the time $T={(\tilde{C}(\|x_0\|)-V_0)}/{\delta}$, and will stay there afterwards. So b) also holds.
If $\eta\equiv0$, then, firstly, by b) trajectory $x(t)$ will enter  $Q$. There by iii) its norm satisfies
$
(d/dt) \| x\|_H^2 \le -2\mu \|x\|_H^2.
$
So c) in Hypothesis~(K) aslo is valid.
\smallskip

There are many equations  \eqref{LZ3} with Lyapunov functions, satisfying i)-iii).
For example, if $f(x,\eta)=-\gamma x+f_1(x,\eta)$,
$\gamma>0$ and $(x,f_1(x,\eta))\leq \frac{1}{2}\gamma\|x\|_H^2+D(\|\eta\|_{E})$, where $D$ is a continuous function that
vanishes at zero, then i)-iii) hold with $V(x)=\frac{1}{2}\|x\|_H^2.$ \qed
\end{remark}
\medskip

Since $H^1(0,1)$ is a Banach algebra, then the mapping
\[
 {H}^1(I,H)  \times  \mathcal{H}^1\rightarrow  L^2(I,H) \times H, \quad
 (x(t),\eta(t))\rightarrow \big( \dot{x}(t)-f(x(t),\eta(t)), \,x(0) \big)
\]
is $C^2-$smooth.
 Its differential in the first argument, evaluated at  $(x(\cdot), \eta(\cdot))$, is the mapping
$$
y(t) \mapsto \big(\dot y(t) -D_x f(x(t), \eta(t)) y(t), \, y(0)\big) \qquad t \in I.
$$
The latter corresponds to the initial-value problem for a linear ODE with continuous coefficients  in space $H$. So it defines
an isomorphism
$
{H}^1(I,H)  \xrightarrow{\sim}  L^2(I,H) \times H,
$
and  by the implicit function theorem a solution $x \in H^1(I,H)$ of \eqref{LZ3},  \eqref{IC-LZ3} with  $x_0\in H$ and
$\eta \in \ \mathcal{H}^1$, if it exists, is a $C^2$-smooth function of $x_0$ and $\eta $.  Jointly with Hypothesis~\hyperlink{K}{\rm(K)}, this implies that the map
\be\label{IFT}
X\times \mathcal{B}\rightarrow H^1(I,H), \quad (x_0, \eta(\cdot)) \mapsto x(\cdot),
\ee
where $x(\cdot)$ is a solution of  \eqref{LZ3},  \eqref{IC-LZ3}, is $C^2$-smooth (i.e. it extends to a $C^2$-smooth
mapping, defined in the vicinity of $X\times \mathcal{B}$ in $H\times \mathcal{H}^1$). So the   mapping
$
S=S^1: X\times \mathcal{B} \rightarrow  X$, which sends $ (x_0,\eta (\cdot)) $ to $ x(1),$
also
 is well defined and $C^2-$smooth.  Its differential $D_\eta S(x_{0}, \eta)$ is the linear mapping
\begin{equation}\label{LZ7}
D_\eta S(x_{0}, \eta): \mathcal{H}^1\rightarrow H,~\xi\mapsto z(1),
\end{equation}
where $z(t)$ is a solution of the linear equation
\begin{equation}\label{LLZ3}
\dot z=D_xf(x(t),\eta(t))z+D_\eta f(x(t),\eta(t))\xi(t),~~z(0)=0.
\end{equation}
The mapping $S$ defines a discrete time random system for variables $u_k= x(k) \in X:$
\begin{equation}\label{CZ8}
u_k=S(u_{k-1},\eta_k ),~~\eta_k  \in\HH^1,\qquad
k\ge 1,~~u_0=x.
\end{equation}

Let us assume that equation \eqref{LZ3} satisfies the following linearised  controllability assumption:

\begin{description}\sl
	\item [\hypertarget{LC}{\bf(LC) Linearised controllability}.]
For $x_0\in X$, $\eta \in\KK$
the linear mapping $D_\eta S(x_{0}, \eta)$ in \eqref{LZ7} extends to a continuous surjective mapping \
$L^2(I, Y) \rightarrow H$, which continuously depends on $(x_0, \eta)$.
\end{description}

Hypothesis~\hyperlink{LC}{\rm(LC)} is equivalent to the fact that for any $x_0\in X$ and  $\eta \in\KK$
the associated linear system~\eqref{LLZ3} is controllable in~$H$ at time $t=1$ with controls from $L^2(I, Y)$.  It
  is not restrictive:

\begin{proposition}\label{e_control}
If Hypothesis~\hyperlink{K}{\rm(K)} holds and, for each $x \in H$ and $\zeta \in Y$, the mapping $D_\eta f(x,\zeta) \in \mathcal{L}(Y; H)$ is surjective, then Hypothesis~\hyperlink{LC}{\rm(LC)} is satisfied.
\end{proposition}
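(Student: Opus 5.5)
Fix $x_0\in X$ and $\eta\in\KK$, and let $x(t)$, $t\in I$, be the corresponding solution of \eqref{LZ3}, \eqref{IC-LZ3}. By (K)a) it stays in $X$, and by the map \eqref{IFT} it depends continuously on $(x_0,\eta)$ in $H^1(I,H)\subset C(I,H)$. Set $A(t)=D_xf(x(t),\eta(t))$ and $B(t)=D_\eta f(x(t),\eta(t))$. Since $\eta\in\HH^1\subset C(I,Y)$, both are continuous in $t$. Let $W(t,s)$ be the fundamental matrix of $\dot z=A(t)z$. Variation of constants gives
$$
z(1)=\int_0^1 W(1,s)B(s)\xi(s)\,ds .
$$
The right-hand side is well defined and continuous for $\xi\in L^2(I,Y)$, so it furnishes the required continuous extension $\mathcal{L}(x_0,\eta):L^2(I,Y)\to H$ of $D_\eta S(x_0,\eta)$.

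Next I show that $\mathcal L(x_0,\eta)$ is surjective. The plan is the Gramian argument. Since $H$ is finite-dimensional, it suffices to show that the adjoint $\mathcal L^*$ is injective. For $p\in H$, we have $(\mathcal L^*p)(s)=B(s)^*W(1,s)^*p$. Suppose this vanishes for a.e. $s$; by continuity it then vanishes for all $s\in I$. Evaluating at $s=1$ gives $W(1,1)=I$, so $B(1)^*p=0$. Now $B(1)=D_\eta f(x(1),\eta(1))$ is surjective onto $H$ by hypothesis, so $B(1)^*$ is injective and $p=0$. Hence the controllability Gramian $\int_0^1 W(1,s)B(s)B(s)^*W(1,s)^*ds$ is positive definite, and $\mathcal L$ is onto. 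In fact one could use any single time $s$; the pointwise surjectivity of $D_\eta f$ is much stronger than needed.

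It remains to prove continuous dependence on $(x_0,\eta)\in X\times\KK$ in operator norm $L^2(I,Y)\to H$. The map $(x_0,\eta)\mapsto x(\cdot)\in C(I,H)$ is continuous by \eqref{IFT}, and $\eta\mapsto\eta(\cdot)\in C(I,Y)$ is continuous by the embedding $H^1\subset C$. Since $f$ is $C^2$, the maps $(x_0,\eta)\mapsto A(\cdot),B(\cdot)$ are continuous into $C(I,\LL)$. Standard Gronwall estimates then give continuity of $W(1,\cdot)$ in $C(I,\LL(H))$. The operator norm of $\mathcal L$ is bounded by $\sup_s\|W(1,s)B(s)\|$, which yields the continuity.

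The main obstacle is the careful justification of this continuity chain. In particular, the smoothness in \eqref{IFT} is only asserted on a neighbourhood of $X\times\mathcal B$, and $\KK\subset\mathcal B$ is measured in the $\HH^1$-norm; both facts are needed to control $\eta(t)$ uniformly. Once continuity holds, the surjectivity argument itself is elementary.
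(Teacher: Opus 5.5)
Your proof is correct, but it reaches surjectivity by a different route from the paper's.

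\emph{The paper's route.} The argument is constructive. It takes the Moore--Penrose right inverse $R(x,\zeta)$ of $D_\eta f(x,\zeta)$ and picks a smooth curve $\tilde z$ with $\tilde z(0)=0$, $\tilde z(1)=z_1$. It then defines the control $\tilde\xi(t)=R(x(t),\eta(t))\bigl(\dot{\tilde z}-D_xf(x(t),\eta(t))\tilde z\bigr)$. With this control, $\tilde z$ itself solves the linearised equation and reaches $z_1$ at $t=1$. Continuity in $(x_0,\eta)$ is then taken from continuous dependence of the solution operator $\zeta\mapsto z$, $L^2(I,H)\to H^1(I,H)$, on the coefficients.

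\emph{Your route.} You use the duality (Gramian) criterion instead. In finite dimensions $\mathcal L$ is onto if and only if $\mathcal L^*$ is injective. The vanishing of $\mathcal L^*p=B(\cdot)^*W(1,\cdot)^*p$ forces $p=0$ as soon as $B(s)$ is onto for a single $s$.

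\emph{What each buys.} The paper's construction gives an explicit control. If one takes $\tilde z(t)=tz_1$, it is linear in $z_1$, so it yields a concrete right inverse of $D_\eta S(x_0,\eta)$. The price is that $D_\eta f$ must be surjective along the whole trajectory, so that $t\mapsto R(x(t),\eta(t))$ is defined and continuous. Your argument is non-constructive. As you observe, it needs surjectivity of $D_\eta f$ only at one point $(x(s),\eta(s))$ of the trajectory. Your Gronwall estimate on $W(1,\cdot)$ also establishes continuity of the extension in operator norm $L^2(I,Y)\to H$ more explicitly than the paper's one-line appeal to continuous dependence.
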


\begin{proof}
Since spaces $H$ and $Y$ are finite-dimensional and the operator $D_\eta f(x,\zeta) : Y \to H$ is surjective for every $x \in H$ and
 $\zeta \in Y$, it admits the Moore--Penrose  right inverse operator
\[
R(x,\zeta) := (D_\eta f(x,\zeta))^* \bigl[ D_\eta f(x,\zeta) (D_\eta f(x,\zeta))^* \bigr]^{-1}
\]
(see \eqref{right-inverse}).
 As the map $f: H \times Y \to H$ is $C^2$-smooth, then the  operator  $R(x,\zeta)$ depends continuously on $(x,\zeta)$.

Let $x_0 \in X$ and $\eta   \in \mathcal{K}$ in \eqref{LZ3},    \eqref{IC-LZ3}
be given and let $x(t)$ be a solution of \eqref{LZ3}, \eqref{IC-LZ3}.
 For any $z_1 \in H$, choose a smooth
curve $\tilde{z} \in C^\infty(I, H)$ such that $\tilde{z}(0)=0$ and $\tilde{z}(1)=z_1$. Define
\[
\tilde{\xi}(t) := R(x(t),\eta(t)) \bigl( \dot{\tilde{z}} - D_x f(x(t),\eta(t)) \tilde{z} \bigr).
\]
Thus defined curve $\tilde{\xi}(t)$ is continuous in $t\in I$, so it belongs to  $L^2(I,Y)$.
Moreover,  since $\tilde{z}$ satisfies \eqref{LLZ3} with $\xi = \tilde{\xi}$ and $\tilde{z}(1) = z_1$, then
$D_\eta S(x_0, \eta)(\tilde\xi) = z_1$. Thus,  the linear mapping $D_\eta S(x_0, \eta) : L^2(I,Y) \to H$ is surjective.

In equation \eqref{LLZ3} let us denote
$
D_\eta f(x(t),\eta(t))\xi(t) = : \zeta(t).
$
Since in that equation the coefficient $D_x f(x,\eta)$  is a continuous in time matrix-function that
continuously depends on $(x_0,\eta) \in X \times \KK$, then the mapping
$$
L^2(I,H) \to H^1(I,H), \quad \zeta(\cdot) \mapsto z(\cdot) ,
$$
is a continuous linear operator that continuously depends on $(x_0,\eta)$.

As $\zeta \in L^2(I,H)$ continuously depends on $(x_0,\eta)$, then the solution $z\in H^1(I,H)$ continuously depends
on $(x_0,\eta)$. So $z(1) \in H$ also does. This completes the verification of Hypothesis~\hyperlink{LC}{\rm(LC)}.
\end{proof}

By this result many equations as in Remark \ref{rem1}, apart from Hypothesis~\hyperlink{K}{\rm(K)} also meet  Hypothesis~\hyperlink{LC}{\rm(LC)}.
We now state a result on the mixing for equation \eqref{LZ3} with continuous random forces.
\begin{theorem}\label{App-C}
Assume that in \eqref{LZ3} $\eta (t)\in Y$ is a continuous random process satisfies
Hypotheses~\hyperlink{H}{\rm(H)}, and equation \eqref{LZ3} satisfies Hypotheses~\hyperlink{K}{\rm(K)} and \hyperlink{LC}{\rm(LC)}. Then system \eqref{CZ8} is exponentially mixing in the total variation distance.
\end{theorem}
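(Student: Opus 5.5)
The plan is to reduce \eqref{CZ8} to the extended system \eqref{a5} and verify the hypotheses of Theorem~\ref{mixing} for it. Set $w_k=(u_k,\nu_k)\in\hat H:=H\times Y$ and $\hat\eta_k=\nu_k\phi_2+\eta^0_k\in E_0$ as in \eqref{b6}. Define
\[
\hat S\big((u,\mu),\hat\eta\big)=\big(S(u,\mu\phi_1+\hat\eta),\,\nu(\hat\eta)\big),
\]
where $\nu(\hat\eta)$ is the coefficient of $\phi_2$ in $\hat\eta$, so that $\nu(\hat\eta)=\hat\eta(0)$ is a bounded linear functional on $E_0$. By \eqref{b4}, the pair $w_k$ solves $w_k=\hat S(w_{k-1},\hat\eta_k)$, and the first component of $w_k$ is $u_k$. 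Mixing in total variation for $\{w_k\}$ therefore implies mixing in total variation for $\{u_k\}$, because projection does not increase $\|\cdot\|_{var}$. Take the phase space $\hat X=X\times B$, with $B$ as in \eqref{Z6}, and take the compact set $\KK\subset E_0$ from (H). Then $\hat S(\hat X\times\KK)\subset\hat X$ by (K)a) and $\|\mu\phi_1+\hat\eta\|_{\HH^1}\le r$. Since $\dim\hat H<\infty$ and $\hat S$ is $C^2$ by the implicit function argument around \eqref{IFT}, the standing assumptions hold. Hypotheses (SF), (RZ) and (LCR)(b) for $\hat\eta_k$ with any $F_n$ are exactly Hypothesis~(H).

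Next I verify (GD) for $\hat S$. With zero noise, $\hat S_k((u,\mu);\mathbf 0_k)=(S_k(u;\mu\phi_1,0,\dots,0),0)$. The norm $\|\cdot\|_{\hat H}$ is not literally contracted by a factor $a<1$ uniformly on all of $\hat H$. This is not a real difficulty: the proof of Proposition~\ref{pc2} uses (GD) only through the consequence \eqref{14}, namely that the state enters any small ball after small noises for long enough, uniformly on the compact phase space. That consequence follows from (K)c) together with continuity of $S$ on the compact set $\hat X$. I will either use a $\hat X$-restricted version of (GD), noting that this suffices for Section~\ref{s_4}, or just verify \eqref{14} directly.

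The main step is (LCR)(a). I need a finite-dimensional $F\subset E_0$ such that $D_{\hat\eta}\hat S(w,\hat\eta)\colon F\to H\times Y$ is onto on a neighbourhood of $\hat X\times\KK$. The derivative is
\[
\xi\mapsto\big(D_\eta S(u,\mu\phi_1+\hat\eta)\xi,\ \xi(0)\big).
\]
I split $\xi=y\phi_2+\xi^0$ with $\xi^0\in\HH^1_0$. Then the second component is $y$, which covers $Y$. For the first component, (LC) says $D_\eta S$ extends to a surjection from $L^2(I,Y)$ onto $H$ that depends continuously on the point. Since $\HH^1_0$ is dense in $L^2(I,Y)$ and $H$ is finite-dimensional, the image of $\HH^1_0$ is already all of $H$. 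Hence finitely many $\varphi_i\otimes e_j$ map onto a basis, and by continuity and compactness one $F_N=\mathrm{span}\{\varphi_i\otimes e_j\}$, together with $\phi_2\otimes Y$, works on a neighbourhood of the compact set. The resulting map is then triangular onto $H\times Y$. This is the same argument as in the proof of \eqref{must_have}.

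I expect the obstacle to be the uniformity in this step. Continuity of the extended operator in $\mathcal L(L^2,H)$ must give openness of the surjectivity condition restricted to a fixed finite-dimensional $F_N$. A nonsingular $\dim H\times\dim H$ minor depends continuously on the point, so this should go through. Once (LCR)(a) holds, Theorem~\ref{mixing} applies to $\{w_k\}$, and projecting onto $H$ gives exponential mixing in total variation for \eqref{CZ8}.
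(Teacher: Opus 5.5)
Your proposal is correct and follows essentially the same route as the paper: the lift $w_k=(u_k,\nu_k)$ with $\hat S$ as in \eqref{hatS}, (SF), (RZ) and (LCR)(b) read off from (H), and (LCR)(a) obtained from the triangular form \eqref{dS} together with a finite-dimensional $F_N\subset\HH^1_0$ produced from (LC) by density and compactness, exactly as in Lemma~\ref{lem1}. Your remark that (GD) is not literally implied by (K)c) for $\hat S$ is correct, and so is your observation that only its consequence \eqref{14} on the compact set $\hat X$ is used, in Proposition~\ref{pc2}; the paper passes over this point and simply asserts (GD) from (K)c).
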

\begin{proof}[Proof of Theorem \ref{App-C}]
According to \eqref{Z4} and Hypothesis~\hyperlink{K}{\rm(K)}, we can rewrite   system \eqref{CZ8} as
\begin{equation}\label{C1}
u_k=S(u_{k-1},\nu_{k-1}\phi_1+\nu_{k}\phi_2+\eta_k^{0}),~k\ge 1,~~u_0=x,
\end{equation}
where $u_k\in X=\bar{B}_R(H)$ for all $k$. Let us denote
\[ \begin{split}
&\hat X:= X \times B \subset H \times Y =: \hat H, \quad \hat H =\{w=(u,\nu)\},\\
&\hat\KK:= B\times \KK^0\subset Y \times \HH^1_0 =: \hat E, \quad \hat E =\{ \eta =(\nu, \eta^0)\}
\end{split}
\]
(see \eqref{Z6}), and set
$$
w_k =(u_k, \nu_{k}) \in \hat X, \qquad \eta_k = (\nu_k, \eta_k^0) \in \hat\KK.
$$
We recall that $\nu_k =\eta_k(1) = \eta(k)$ and that $\eta_k^0$ is the  projection of $\eta_k\in \HH^1$ to the space
$\HH_0^1$, corresponding to the decomposition \eqref{decomp}.
Due to \eqref{C1}, system \eqref{CZ8} which we examine implies  the following random dynamics for variable $w_k$
in space $\hat X$:
\begin{equation}\label{C2}
w_k=\hat{S}(w_{k-1},\eta_{k}),~k\ge 1,~~w_0=(x,\nu_{0}),
\end{equation}
where
\begin{equation} \label{hatS}
\hat{S}:\hat{X}\times \hat\KK \rightarrow \hat{X},~~ (w, \eta)=
\big((u,\mu ),(\nu , {\eta^0})\big)\mapsto ({S}(u,\mu  \phi_1+ {\nu }\phi_2+ {\eta^0}), {\nu }).
\end{equation}
Clearly the map  $\hat{S}$ extends  to a $C^2-$smooth mapping
$\hat{S}: \hat H \times \hat E \to \hat H$, and we see immediately  that if
 $\{u_k\}_{k\geq0}$ is a solution of \eqref{C1}, then $\{w_k=(u_k,\nu_{k})\}_{k\geq0}$ solves \eqref{C2}.

We wish to  apply Theorem~\ref{mixing} to system \eqref{C2} to
 establish Theorem~\ref{App-C}. For that it suffices to verify for \eqref{C2}
  the hypotheses of Theorem~\ref{mixing}.  Indeed, by Hypothesis~{\rm \hyperlink{H}{(H)}}
 process $\{\eta_k\}$ satisfies Hypotheses~{\rm \hyperlink{SF}{(SF)}, \hyperlink{LCR}{(LCR)}}(b) and{\rm ~\hyperlink{RZ}{(RZ)}}.  Then, according
 to  item c) of Hypotheses~{\rm \hyperlink{K}{(K)}},
 Hypothesis~{\rm \hyperlink{GD}{(GD)} holds for $\hat{S}$.

 So it remains to verify that map  $\hat S$ \  satisfies
  Hypothesis~\hyperlink{LCR}{(LCR)}}(a). From \eqref{hatS} we see that if $w=(u, \mu ), \eta=(\nu , \eta^0)$ and
  $\tilde\eta=(\tilde\nu, \tilde\eta^0)$, then
  \begin{equation} \label{dS}
D_\eta\hat{S}(w,\eta) \tilde\eta = \big( D_\eta S(u, \mu  \phi_1+ {\nu }\phi_2+ {\eta^0})(\tilde\nu \phi_2 +\tilde\eta^0), \tilde\nu\big).
\end{equation}
Functions $\{\varphi_j, j\ge1\}$ form orthogonal bases  both of spaces $L_2(I)$ and  $H^1_0(I)$. For any $N$ we set
$\FF_N =\,$span$\{\varphi_j, 1\le j\le N\} \subset H^1_0(I) \subset H^1(I)$. Writing
$
\HH^1 =  H^1(I) \otimes Y
$
and
$
\HH^1_0 =  H^1_0(I) \otimes Y,
$
we identify  $ \FF_N\otimes Y$ with a finite-dimensional subspace $F_n\subset \HH^1_0\subset \HH^1$.

Also, we denote $\KK^+ = \{ \mu \phi_1 +\nu  \phi_2 +\eta^0\in\HH^1 \mid \mu , \nu  \in B, \eta^0\in \KK\}$.
This is a compact subset of $\HH^1$.

\begin{lemma}\label{lem1}
There exists an open neighbourhood $Q$ of $X\times \KK^+$ in $H\times \HH^1$ and $N\ge1$ such that for every
$(x,\eta)\in Q$
\be\label{surj}
\text {
the mapping
$\ D_\eta S(x, \eta) :  F_N \to H\ $ is surjective.
}
\ee
\end{lemma}
\begin{proof}
By {\rm \hyperlink{LC}{(LC)}}, for any $(x,\eta)\in X \times \KK^+$ the mapping
\be\label{L2}
D_\eta S(x, \eta): L^2(I, Y) \to H
\ee
is a continuous surjection. Since dim$\,H<\infty$ and the system $\{\varphi_j, j\ge1\}$ is a basis of $L^2(I)$ (as well as of $H_0^1$),
 then there
exists an $N(x,\eta)$ for which \eqref{surj} holds. Since  map \eqref{L2} is continuous in $(x,\eta)$, then the function
 $N(x,\eta)$ is lower semi-continuous, and \eqref{surj} holds in the vicinity of $(x, \eta)$ with a suitable
 $N=N'(x,\eta)$. As the set $X\times \KK^+$ is compact, then the assertion follows.
\end{proof}

For $N$ as in Lemma \ref{lem1}, let us set
$
F =Y\times F_N \subset Y\times \HH^1_0= \hat E.
$
Now the form \eqref{dS} of $D_\eta \hat S$ and the lemma imply the validity of Hypothesis~(LCR)(a).
This proves  Theorem \ref{App-C}.
\end{proof}

Evoking property b) in Hypotheses~\hyperlink{K}{\rm(K)}, we get
\begin{corollary}\label{Cor-C}
If $\{u_k\}_{k\ge 0}$ and $\{u'_k\}_{k\ge 0}$ are solution of equation \eqref{CZ8} such that $\|u_0\|_H, \|u'_0\|_H\le \bar{R}$ for some $\bar{R}>0,$ then convergence \eqref{ev} still holds with some $C=C(\bar{R})$ and same $\gamma$.
\end{corollary}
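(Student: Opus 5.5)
The plan is to reduce Corollary~\ref{Cor-C} to Theorem~\ref{App-C}, using the absorbing property b) of Hypothesis~\hyperlink{K}{(K)} and the Markov property of the lifted system. The only new ingredient is the time the solutions need to enter $X$.

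First, let $T=T(\bar R)$ be the time given by b) of~(K). Since the absorption time depends only on the norm of the initial datum, I would take $T(\bar R)$ as a uniform bound over the ball $\bar B_{\bar R}(H)$; if b) is only stated pointwise in $\|x_0\|_H$, I would use the Lyapunov-type monotone bound from Remark~\ref{rem1}, or note that one may assume $T$ nondecreasing in $\|x_0\|$. Set $k_0=\lceil T\rceil$. For every realisation of the noise with $\|\eta|_{I_k}\|_{\HH^1}\le r$, the solutions satisfy $u_k,u'_k\in X$ for all $k\ge k_0$. The map $S^1$ is well defined on $\bar B_{\bar R}(H)\times\mathcal B$ by~(K), so $u_{k_0}$ and $u'_{k_0}$ are $\pi_{k_0}$-measurable random variables with values in $X$.

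Next, I restart the system at time $\tau=k_0$ with these random initial states. Consider the lifted system~\eqref{C2} with $w_{k_0}=(u_{k_0},\nu_{k_0})\in\hat X$. The derivation of Theorem~\ref{App-C} from Theorem~\ref{mixing} allows $\pi_\tau$-measurable initial data: Definition~\ref{dm} explicitly permits random initial states, and the constant in~\eqref{uev} is uniform in them. Applying~\eqref{ev} (equivalently~\eqref{uev} twice, through $\hat u_k$) with $\tau=k_0$ gives
\[
\|\DD(u_k)-\DD(u'_k)\|_{var}\le C e^{-\gamma(k-k_0)}=Ce^{\gamma k_0}e^{-\gamma k},\qquad k\ge k_0+1 .
\]
For $1\le k\le k_0$, the total variation distance is at most $1\le e^{\gamma k_0}e^{-\gamma k}$. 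So~\eqref{ev} holds for all $k\ge1$ with $C(\bar R)=\max(C,1)e^{\gamma\lceil T(\bar R)\rceil}$ and the same $\gamma$.

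The main point to check is that restarting at $k_0$ is legitimate, i.e. that $(u_{k_0},{}_\tau\eeta_{k_0})$ is an admissible initial condition for the lifted Markov system in Lemma~\ref{l_lift}. It is, since $u_{k_0}$ is $\pi_{k_0}$-measurable, and Theorem~\ref{mixingL} and the argument deriving~\eqref{uev} hold for arbitrary $\hat\FF_\tau$-measurable initial data, with constants independent of $\tau$. The other point, which I expect to be the only real obstacle, is the uniformity of the entrance time $T$ over the ball $\bar B_{\bar R}(H)$.
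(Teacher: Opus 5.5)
Your argument is correct and is essentially the paper's: the paper obtains the corollary by invoking (K)(b) to get a time after which both trajectories lie in $X$, restarting there with $\pi_{k_0}$-measurable random initial data (the paper writes this shift out in the proof of Corollary~\ref{t_cont_time}), and absorbing the time shift into $C(\bar R)$. You are also right that uniformity of $T$ over $\bar B_{\bar R}(H)$ has to be assumed; the paper leaves this implicit in its notation $T(\|x_0\|_H)$.
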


Now let us return from system \eqref{LZ8} to the original equation \eqref{LZ3}. Let $x(t)$ and $x'(t)$ be
its solutions  with $x(0)=x_0$ and $x'(0)=x'_0$, respectively. Let
$\|x_0\|_H, \|x'_0\|_H \le \bar R$ for some $\bar R>0$.

\begin{corollary}\label{t_cont_time}
Under the assumptions of Theorem \ref{App-C},
\begin{equation}\label{test}
\|\DD(x(t))-\DD(x^\prime(t))\|_L^* \le C'(\bar{R})\,e^{-\gamma t}, \quad t\ge 0,
\end{equation}
for  some constant $C'(\bar{R})>0$.
\end{corollary}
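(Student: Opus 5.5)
We need to prove Corollary \ref{t_cont_time}: the continuous-time estimate \eqref{test} in the dual-Lipschitz distance.

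The plan is to reduce to integer times and then propagate over one unit of time. Write $t=k+s$ with $k=\lfloor t\rfloor$ and $s\in[0,1)$. By Hypothesis~\hyperlink{K}{\rm(K)}b) there is $T=T(\bar R)$ after which both trajectories lie in $X$. Since $T$ is bounded and the distance in \eqref{LipD} is at most $2$, it suffices to treat $t\ge T+1$; smaller $t$ are absorbed into the constant $C'(\bar R)$.

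Let $w_k=(u_k,\nu_k)$ be the lifted system \eqref{C2}, and let $W_k=(w_k,\eeta_k)$ be its Markov lifting to $\XXXX$ as in Lemma~\ref{l_lift}, with noise $\eta_k=(\nu_k,\eta_k^0)$. The value $x(k+s)$ is a deterministic function of $u_k$ and of the piece of forcing $\eta|_{[k,k+s]}$. That piece is determined by $\nu_k$, $\nu_{k+1}$ and $\eta^0_{k+1}$ via \eqref{b4}, i.e. by $(W_k,W_{k+1})$. So
\[
x(k+s)=\Gamma_s(W_k,W_{k+1}),\qquad \Gamma_s(W,W')=S^s\big(\Pi_X W,\ \mu\phi_1+\nu'\phi_2+\eta'^0\big)\big|_{\text{first component}},
\]
where $\mu$ is the $Y$-component of $\Pi_{\hat X}W$ and $(\nu',\eta'^0)$ is the last noise entry of $W'$. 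I use that the solution map \eqref{IFT} is $C^2$ and hence uniformly Lipschitz on the compact set $X\times\KK^+$. Moreover, the time-$s$ value is controlled by the $H^1(I,H)$ norm of the trajectory, uniformly in $s\in[0,1]$. It follows that $\Gamma_s$ is Lipschitz on $\XXXX\times\XXXX$ with a constant independent of $s$, because the coordinates it uses enter the metric $\dd_\XXXX$ with weights $L$ and $\alpha^0=1$. Therefore, for $f$ with $\|f\|_L\le1$ on $H$, the function $f\circ\Gamma_s$ has $\|f\circ\Gamma_s\|_L\le C$.

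Applying the argument of Theorem~\ref{mixing} to \eqref{C2}, we get estimate \eqref{evLs} with $s=1$. I will apply it to the two solutions started from $(w_{k_0},\eeta_{k_0})$ and $(w'_{k_0},\eeta_{k_0})$ at the entrance time $k_0=\lceil T\rceil$. Comparing each of them with $\hat W$ via \eqref{Uev} and the triangle inequality gives
\[
\|\DD(W_k,W_{k+1})-\DD(W'_k,W'_{k+1})\|_L^*\le C e^{-\gamma(k-k_0)}.
\]
Composing with $\Gamma_s$ then yields \eqref{test}, with $e^{-\gamma k}\le e^{\gamma}e^{-\gamma t}$.

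The main obstacle is a subtlety at the entrance time. The initial data at $k_0$ are random and $\widehat\FF_{k_0}$-measurable, and Theorem~\ref{mixingL} allows this. But $w_{k_0}$ must lie in $\hat X$, which needs $\nu_{k_0}\in B$ (true by \eqref{Z6}) and $u_{k_0}\in X$ (true by (K)b) once $k_0\ge T$). Checking that the Lipschitz bound on $\Gamma_s$ is uniform in $s$ is routine.
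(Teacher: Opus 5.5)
Your proposal is correct and takes essentially the paper's route. Both arguments pass after an absorption time to the lifted Markov system of \eqref{C2} and apply Theorem~\ref{mixingL} there. They then use that the joint law of the current state and the next noise piece is Lipschitz in the current law, and push this forward through the $C^2$ solution map on one unit interval. You phrase that Lipschitz step via Lemma~\ref{l_Lip}(3)/\eqref{evLs}, where the paper says $\DD(\breve x_k)$ is Lipschitz in $\DD(U_{k-1})$. Your rounding of $T$ up to an integer and the uniformity in $s$ via $H^1(I,H)\subset C(I,H)$ make explicit two points the paper leaves implicit.
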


\begin{proof}
Using (b) in Hypotheses~\hyperlink{K}{\rm(K)} we find $T\ge0$ such that
$
x(T), x'(T) \in X = \bar B_R(H),
$
for all $\om$. Then, shifting time by $T$, we may assume that the initial data $x_0, x_0'$ are random variables in
$X$, measurable with respect to the sigma-algebra $\pi_0( \{\eta_k \})$. We set
$$
u_k=x(k),\;\; u'_k= x'(k), \;\; w_k =(u_k, \nu_k), \;\; w'_k =(u'_k,  \nu_k), \quad k \in\Z_+.
$$
Processes $\{w_k\}$ and  $\{w'_k\}$ are solutions of \eqref{C2} with $\eta_k=(\nu_k, \eta_k^0)$ and
$w_0= (x_0, \nu_0), \ w'_0= (x'_0, \nu_0)$.
Consider the lifting of system \eqref{C2} to a corresponding Markovian system \eqref{RDSL}, where
$$
U_k = \big(w_k,  (\nnu_k, \eeta^0_k)\big), \qquad U'_k = \big(w'_k,  (\nnu_k, \eeta^0_k)\big)
$$
(we use Lemma \ref{l_lift} with $\tau=0$, where we re-denoted\,  ${}_\tau \nu_k$ to $\nu_k$ and ${}_\tau \eta^0_k$
to $ \eta^0_k$). By  Theorem~\ref{mixingL},
\be\label{v1}
\| \DD(U_l) - \DD(U'_l)\|_L^* \le C \|x_0-x'_0\|_H e^{-\gamma l}, \qquad l\in \Z_+.
\ee

For any $k\ge1$ let us denote $\breve{x}_k= x(\cdot)\!\mid\!\!_{I_k}$, and recall that
$\eta _k = \eta (\cdot)\!\mid\!\!_{I_k}$. Then $\breve{x}_k\in H^1(I,Y)$ is a $C^2$-smooth function of
$u_{k-1}\in H$ and $\eta_k \in \HH^1$ (see the discussing of mapping \eqref{IFT}).
Identifying $\eta_k $ with $(\nu_{k-1}, \nu_k, \eta_k^0)$ (see \eqref{Z4}) we get that
\be\label{xk}
\breve{x}_k\; \text{ is a  $C^2$-function of } \; \big(w_{k-1}, (\nu_k, \eta_k^0)\big).
\ee

Below we provide spaces of measures with the dual-Lipschitz distances. As process $\eta $ meets
Hypotheses~{\rm \hyperlink{SF}{(SF)}} by assumption {\rm \hyperlink{H}{(H)}}, then
$\DD(\nu_k, \eta_k^0)$ is a Lipschitz function of $\eeta_{k-1}$. From here and \eqref{xk} we get that
\be\label{xk1}
\DD(\breve{x}_k ) \, \text{ is a Lipschitz function of  } \; \DD(U_{k-1}), \quad k\in\N,
\ee
with a $k$-independent Lipschitz  constant (cf. proof of item 2) of Lemma~\ref{l_Lip}). For $t$ in \eqref{test} let
$k$ be the smallest integer bigger than $t$. Then $t\in I_k$, and \eqref{test} follows from
\eqref{v1} with $l= k-1$  and  \eqref{xk1}, where the constant $C'$ depends on $\bar R$ due to the time-shift
 which we made at the beginning of the proof.
\end{proof}

\subsection{Application to a randomly forced non-linear PDE}\label{s_6.4}
In this section, we study the mixing problem for the randomly forced 3D primitive equations of atmospheric mechanics. These
equations can be written as a system of the form \eqref{a2} in a suitable infinite-dimensional space $H$, with $\eta(t)$ entering
the mapping $F$ additively. Exponential mixing for the equations was examined in \cite{B22, BGN23}, where the random force
$\eta$ was assumed to be a bounded discontinuous random Haar series whose restrictions to different intervals $I_j$ are
independent. Due to the latter property, the discrete-time system \eqref{a3} to which the equations may be reduced, defines
 a Markov process in $H$. In \cite{B22}, exponential mixing was established for nondegenerate random forces, for which all
 Fourier modes in $x$ of the force are excited, and in \cite{BGN23}, for degenerate forces, when
  only a few modes are excited.
 Our presentation in this section uses some constructions and lemmas from \cite{B22}.

Denote by  $\mathbb{O} $ the torus  $\mathbb{T}_L^2 \times \mathbb{T}_h$, where
$\mathbb{T}_L^2 := (\mathbb{R}/L\mathbb{Z})^2$ and $\mathbb{T}_h := \mathbb{R}/h\mathbb{Z}$.
 The randomly
 forced 3D primitive equations in  $\mathbb{O}$ is the following system:
\begin{equation}\label{PE1}
\partial_t u-\Delta u+(u\cdot \nabla_2)u-(\int_{-h}^z {\rm{div}}_2 u(\cdot,\cdot,\xi)d\xi)\partial_z u+\nabla_2 p=\eta.
\end{equation}
Here  $u = u(x,y,z,t) = (u_1, u_2)(x,y,z,t)$ for $(x,y,z) \in \mathbb{O}$, $t\ge0$,
and  $\operatorname{div}_2 u = \partial_x u_1 + \partial_y u_2$.  See Introduction in \cite{B22} and references therein.
We supplement the equations with periodic boundary conditions, odd in $z$.
 The former means that  $(x,y,z) \in \mathbb{O}$, while the latter --  that
\begin{equation}\label{PE2-2}
\begin{split}
u(x,y,-z,t)=u(x,y,z,t),
\end{split}
\end{equation}
and add an initial condition
\begin{equation}\label{PE-IC}
u(\cdot,0)=u_0.
\end{equation}

We assume that the initial data $u_0$ and the random force $\eta$ have zero meanvalues:
$$\int_{\mathbb{O}}u_0(\zeta)d\zeta=0,~~\int_{\mathbb{O}}\eta(\zeta,t)d\zeta=0,~~\forall t\ge0.$$
This implies that the solution to system \eqref{PE1},  \eqref{PE2-2}, \eqref{PE-IC} also satisfies
$$\int_{\mathbb{O}}u(\zeta,t)d\zeta=0,~~\forall t\ge0.$$
Defining
$ 
b(u,v):=(u\cdot \nabla_2)v-(\int_{-h}^z {\rm{div}}_2 u(\cdot,\cdot,\xi)d\xi)\partial_z v,
$ 
we rewrite \eqref{PE1} in a more compact form
\begin{equation}\label{PE5}
\partial_t u-\Delta u+b(u,u)+\nabla_2 p=\eta(t).
\end{equation}

For any $m\in \N$, we denote by $H^{m}(\mathbb{O}; \R^2)$ the Sobolev space of degree $m$ on $\mathbb{O}$,
 denote
$$H^m:=\{u\in H^{m}(\mathbb{O};\R^2) :\int_{\mathbb{O}}u(\zeta)d\zeta=0,~u~{\rm{satisfies\; \eqref{PE2-2}}}\},$$
and provide $H^{m}$  with the homogenous scalar product
$\langle u,v\rangle_m:=\langle (\nabla)^{m}u,(\nabla)^{m}v\rangle.$
Next we set
\begin{equation*}
\begin{split}
H^m\oplus \mathbb{R} {\bf1}:=\{v+c~|~v\in H^m,c\in \mathbb{R}\},\qquad
V^m:=\{u\in H^{m}: \int_{-h}^{0}{\rm{div}}_2 u(x,y,z)\,dz\equiv 0\},
\end{split}
\end{equation*}
and  denote by~$\Pi$ the orthogonal projection in space $H^m\oplus \mathbb{R}{\bf1}$ onto~$V^m$. Then
$\Pi(\nabla_2 p)=0$ and $\Pi \Delta u=\Delta u$. 
Finally, by
applying $\Pi$ to  equation  \eqref{PE5},
we rewrite it as a  nonlocal PDE,
\begin{equation}\label{PE-reduced}
	\p_tu-\Delta u+B(u)=\eta^H(t), \qquad B(u)=\Pi b(u,u), \quad \eta^H(t) =\Pi  \eta (t),
\end{equation}
  supplemented with  initial condition~\eqref{PE-IC}.

Study of equations \eqref{PE1}=\eqref{PE-reduced},
 where $\eta$ is a  white in time random force,  presents serious difficulties: their well-posedness  is established
only in some restricted sense, and the mixing property for them is unknown; again see  Introduction to \cite{B22} for discussion
and references.  Our goal is to consider those  equations, driven by bounded continuous random forces as in
Section~\ref{s_6.2}. Then the well-posedness follows from the deterministic theory of equations \eqref{PE1}, 
and by applying Theorem~\ref{Inf-mixing} to
the equations \eqref{PE1}=\eqref{PE-reduced} in a manner similar to the application of Theorem~\ref{mixing} to equation \eqref{LZ3} in Section~\ref{s_6.3.2}, we will show that now  the equations are mixing.
For that end we fix any integer $m\ge2$ and 
denote
$$H := V^m,\;\; \mathcal{H}^1:=H^1(I;H),\;\;  \mathcal{H}_0^1:=H_0^1(I;H).$$
 We suppose that  $\eta^H \in H^1(\mathbb{R}^1_{loc};H)$ in \eqref{PE-reduced} is a random process,
  constructed in Section~\ref{s_6.2} and meeting
Hypothesis~\hyperlink{H}{\rm(H)} with $Y=H$.  As before, for $k\in\Z$,
 $\eta^H_k $ stands for  a restriction of $\eta^H$ to the interval $I_k$. Thus
we obtain a random  sequence $\{\eta^H_k\}_{k\in \Z}$ in space  $\mathcal{H}^1$.

 It is  known that 
 for any $u_0\in H$ and for  each $\omega$,  equation~\eqref{PE-reduced}, \eqref{PE-IC}  has
 a unique solution $u(t;u_0)$, belonging to the space $C(\R_+,H)\cap L_{\mathrm{loc}}^2(\R_+,V^{m+1})$
  (e.g., see  \cite[Theorem~1]{B22}).  
  Let  $\{u_k(u_0)  :=u(k;u_0)\}_{k\in\mathbb{Z}_+}$ be
  restriction of this  solution to integer times, and let 
$S: H \times \mathcal{H}^1 \to H$ denote the time-$1$ shift along trajectories of equation~\eqref{PE-reduced}. Then
	\begin{equation}\label{PED}
		u_k=S(u_{k-1},\eta^H_k), \quad k\ge1.
	\end{equation}
By \cite[Theorem 3]{B22} the map $S$ is $C^2$-smooth (in fact, it is analytic). 
Let  $\KK\subset \mathcal{H}^1$ be a
compact set that contains the supports of distributions of random variables $\eta_k^H$. By 
\cite[Theorem 4]{B22}  there is a compact set $X\subset H$ such that 
$$
S: X \times \KK \rightarrow X.
$$
The arguments in the first three lines of Page 156 in \cite{B22} show that the invariant set  $X$  can be chosen 
to be absorbing: for any $u_0 \in H$ there is $N=N(\|u_0\|_H)$ with the property 
that for any
$\eta^H_1, \dots \eta^H_N \in \KK$, a solution of \eqref{PED} is such that $u_N\in X$.

Finally, let $B\subset H$ be a closed ball  that contains the set $\KK$. 
The following theorem describes the asymptotic behaviour of~$u_k$ as $k\to\infty$. 

\begin{theorem}\label{PE-C}
Assume that $\eta^H(t)\in H$ is a continuous random process, satisfying
Hypothesis~\hyperlink{H}{\rm(H)} with $Y=H$. Then system  \eqref{PED} is exponentially mixing in the dual-Lipschitz distance.
\end{theorem}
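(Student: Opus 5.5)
We follow the scheme used for Theorem~\ref{App-C}, with Theorem~\ref{Inf-mixing} in place of Theorem~\ref{mixing}. Write $\eta^H_k=\nu_{k-1}\phi_1+\nu_k\phi_2+\eta^0_k$ as in \eqref{Z4}, with $\nu_k=\eta^H(k)\in H$ and $\eta^0_k\in\HH^1_0$. Set $w_k=(u_k,\nu_k)\in\hat H:=H\times H$ and $\hat\eta_k=(\nu_k,\eta^0_k)\in\hat E:=H\times\HH^1_0$. This turns \eqref{PED} into the system \eqref{C2} with the map $\hat S$ from \eqref{hatS}, acting on $\hat X:=X\times B$. Because $S$ is $C^2$ (indeed analytic) by \cite[Theorem 3]{B22}, $\hat S$ extends to a $C^2$ map $\hat H\times\hat E\to\hat H$. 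Because $X$ is invariant, $\hat S(\hat X\times\hat\KK)\subset\hat X$. By Hypothesis~\hyperlink{H}{(H)}, the process $\hat\eta_k$ satisfies (SF), (RZ), and (LCR)(b) for every $F_n$ spanned by the first $n$ basis vectors $\xi_j$. That gives (DLP). We then verify the remaining hypotheses for $\hat S$, apply Theorem~\ref{Inf-mixing}, and project onto the $u$-component. The projection is $1$-Lipschitz, so it does not increase the dual-Lipschitz distance. Initial data outside $X$ are handled with the absorbing property, as in Corollary~\ref{Cor-C}.

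\emph{Step 1: (GD).} We need $\|S_k(u;\mathbf 0_k)\|\le a\|u\|$ for $\hat S$. With $\hat\eta\equiv0$ we get $\nu_k=0$ after one step. After that, the $u$-component follows the unforced primitive equations. For these, $\frac{d}{dt}\|u\|^2_{L^2}=-2\|\nabla u\|^2$, because the nonlinearity $b$ is skew in $L^2$, so $u$ decays exponentially in $L^2$. Parabolic smoothing then gives decay in $V^m$, uniformly on bounded sets (compare \cite{B22}). Strictly, (GD) asks for a linear contraction on all of $H$. Inspecting its use in Proposition~\ref{pc2}, the uniform convergence to $0$ on the compact set $\hat X$ (the analogue of \eqref{converge0}) is all that is needed. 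We would note this and use that weaker form.

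\emph{Step 2: determining space and (ALC).} This is the main step. $D_w\hat S(w,\hat\eta)$ contains the component $\tilde\mu\mapsto D_\eta S(u,\eta)(\tilde\mu\phi_1)$, and the second component of $\hat S$ does not depend on $w$. So $D_w\hat S$ has the block structure $\begin{pmatrix} D_uS & D_\eta S(\cdot\,\phi_1)\\ 0&0\end{pmatrix}$. For $D_uS$ we use the smoothing of the time-$1$ map into $V^{m+1}$, which is compact in $V^m$, and Remark~\ref{r_determ}. This gives $\|(I-\mathsf P_{G_N})D_uS\|\le\varkappa$ for $G_N$ spanned by low Fourier modes. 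The off-diagonal block $\tilde\mu\mapsto D_\eta S(\tilde\mu\phi_1)$ is also smoothing into $V^{m+1}$, since it is the solution of the linearised parabolic equation with $H^1$-in-time forcing. Its high-mode part is therefore small as well. We take $\hat G=G_N\times\{0\}$. The block structure shows that $\hat G$ is determining, possibly after re-weighting the norm on the second factor by a small constant $\epsilon$, so that the $(1,2)$ entry is controlled.

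For (ALC) we must show that $D_{\hat\eta}\hat S(F_\infty)$ is dense enough to contain $\hat G$. The relevant derivative is $\tilde\eta^0\mapsto(D_\eta S(\tilde\eta^0),0)$ with $\tilde\eta^0\in\HH^1_0$. The range of the linearised solution map with forcing in $H^1_0(I;H)$ is dense in $H$. This is approximate controllability of a linear parabolic equation with distributed, full-space controls. It can be proved as in Proposition~\ref{e_control}: take a smooth target curve $\tilde z$ with $\tilde z(0)=0$ and $\tilde z(1)=z_1$, set the control equal to $\dot{\tilde z}-\Delta\tilde z+DB(u)\tilde z$, and approximate it in $L^2(I;H)$ by elements of $\HH^1_0\cap F_\infty$.

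The expected obstacle is exactly here. The control must lie in $H^1_0$ with values in $V^m$, while $\tilde z$ must be smooth enough that the control lies in $H$. We would choose $z_1$ in the smooth dense set of finite Fourier sums, which is enough for $G_N$. We would then use the continuity of the solution map from $L^2(I;V^{m-1})$ to $V^m$ to pass from approximating controls to approximate targets. Uniformity over the compact set $\hat X\times\hat\KK$ is then obtained through Lemma~\ref{Inf-RI}. With (SF), (RZ), (GD), (DLP) and (ALC) verified for \eqref{C2}, Theorem~\ref{Inf-mixing} yields the claimed exponential mixing.
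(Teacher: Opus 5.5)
Your proposal follows the paper's route. You pass from \eqref{PED} to the system \eqref{PEC2} for $w_k=(u_k,\nu_k)$, driven by $(\nu_k,\eta^0_k)$. You take (SF), (RZ) and (DLP) from Hypothesis (H), check (GD) and (ALC) for $\hat S$, apply Theorem~\ref{Inf-mixing}, and project to the $u$-component.

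The difference is in how (GD) and (ALC) are checked. The paper cites \cite[Proposition~2]{B22} for (GD) and \cite[Proposition~3]{B22} for the density of $D_\eta S(u,\eta)(\cup_n F_n)$ in $H$. It says nothing about a determining subspace for $\hat S$. You sketch direct arguments instead:
\begin{itemize}
\item For (GD), you use $L^2$-energy decay plus parabolic smoothing. You rightly note that only the weak form (uniform convergence to $0$ on the compact phase space) is used, in Step~1 of Proposition~\ref{pc2}. This is also how the paper itself uses (GD) in Theorem~\ref{App-C}.
\item For density, you use approximate controllability with full-space controls, modelled on Proposition~\ref{e_control}. Your regularity bookkeeping is sound: controls in $L^2(I;V^{m-1})$, continuity of the linearised solution map into $V^m$, and approximation by finite sums of $\varphi_i\otimes e_j$.
\item Your block-structure argument that $G_N\times\{0\}$ is determining for $\hat S$ covers a point the paper leaves implicit. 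Remark~\ref{r_determ} does not apply to $\hat S$ verbatim, because its second component $\nu$ is not smoothed. The extra block $\tilde\mu\mapsto D_\eta S(u,\eta)(\tilde\mu\phi_1)$ must be handled, and its smoothing into $V^{m+1}$ does this.
\end{itemize}
Your version is more self-contained. The paper's is shorter because it delegates all PDE estimates to \cite{B22}.

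Two minor corrections.

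First, no re-weighting is needed, because the smoothing already makes $(I-\mathsf P_{G_N})D_\eta S(\cdot\,\phi_1)$ small. If you did re-weight, the weight on the $\tilde\mu$-component would have to be large, not small, to shrink the $(1,2)$ entry.

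Second, $\hat X=X\times B$ with $B$ a closed ball of the infinite-dimensional space $H$ is not compact. The abstract theory of Sections~\ref{s_1}--\ref{s_5} requires a compact phase space; the paper's own text has the same slip. Replace $B$ by the compact projection, onto the $\nu$-component, of the support set $\KK\subset E_0$ from (H). Then choose $X$ invariant under the compact set of forces $\mu\phi_1+\nu\phi_2+\eta^0$, using the invariance result quoted from \cite{B22}. Only then is your claim $\hat S(\hat X\times\hat\KK)\subset\hat X$ justified.
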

\begin{proof}
We apply the strategy, used to establish  Theorem \ref{App-C}.  For that, as in Section~\ref{s_6.3.2}, we
 rewrite system \eqref{PED} as
\begin{equation}\label{PEC1}
u_k=S(u_{k-1},\nu_{k-1}\phi_1+\nu_{k}\phi_2+\eta_k^{0}),~~k\ge 1,
\end{equation}
where $u_k\in X$.  As in the proof of  Theorem \ref{App-C},  let us denote
\[ \begin{split}
&\hat X:= X \times B \subset H \times H=: \hat H, \quad \hat H =\{w=(u,\nu)\},\\
&\hat\KK:= B\times \KK\subset H\times \HH^1_0 =: \hat E, \quad \hat E =\{ \eta =(\nu, \eta^0)\},
\end{split}
\]
and set
$
w_k:=(u_k, \nu_{k}) \in \hat X$, $ \eta_k:= (\nu_k, \eta_k^0) \in \hat\KK.
$
According to \eqref{PEC1},   random process $\{w_k\}$ satisfies
\begin{equation}\label{PEC2}
w_k=\hat{S}(w_{k-1},\eta_{k}),~k\ge 1,~~w_0=(u_0,\nu_{0}),
\end{equation}
where
\begin{equation} \label{PEhatS}
\hat{S}:\hat{X}\times \hat\KK \rightarrow \hat{X},~~ (w, \eta)=
\big((u,\mu ),(\nu , {\eta^0})\big)\mapsto ({S}(u,\mu  \phi_1+ {\nu }\phi_2+ {\eta^0}), {\nu }).
\end{equation}
Clearly the map  $\hat{S}$ extends  to a $C^2-$smooth mapping
$\hat{S}: \hat H \times \hat E \to \hat H$, and we see immediately  that if
 $\{u_k\}_{k\geq0}$ is a solution of \eqref{PED}, then $\{w_k=(u_k,\nu_{k})\}_{k\geq0}$ solves \eqref{PEC2}.

Now we  apply Theorem~\ref{Inf-mixing} to  system \eqref{PEC2} to establish Theorem~\ref{PE-C}.
Firstly,  according to Hypothesis~{\rm \hyperlink{H}{(H)}}, the random process $\{\eta_k\}$ satisfies Hypotheses~{\rm \hyperlink{SF}{(SF)}, \hyperlink{DLP}{(DLP)}} and{\rm ~\hyperlink{RZ}{(RZ)}}.  Then, it follows from \cite[Proposition 2]{B22}
that Hypothesis~{\rm \hyperlink{GD}{(GD)} holds for $\hat{S}$.
So it remains to verify that map  $\hat S$ \  satisfies Hypothesis~\hyperlink{ALC}{(ALC)}}. Indeed,  by \eqref{PEhatS} we have  that if $w=(u, \mu ), \eta=(\nu , \eta^0)$ and
$\tilde\eta=(\tilde\nu, \tilde\eta^0)$, then
\begin{equation*}
D_\eta\hat{S}(w,\eta) \tilde\eta = \big( D_\eta S(u, \mu  \phi_1+ {\nu }\phi_2+ {\eta^0})(\tilde\nu \phi_2 +\tilde\eta^0), \tilde\nu\big).
\end{equation*}
Let $\{\xi_j\}_{j\ge1}$ be the basis of $E_0$ as in Section \ref{s_6.2}, and $F_n = $span$\,(\xi_1, \dots, \xi_n)$. Then by
\cite[Proposition~3]{B22}   space  
$D_\eta S(u_0,\eta)\big( \cup_{n=1}^{\infty}F_n\big) $ is dense in $H$. Thus
 Hypothesis \hyperlink{ALC}{\rm(ALC)} holds.
This completes the proof of the theorem.
\end{proof}

By applying the argument in the proof of Corollary \ref{t_cont_time},  we get:

\begin{corollary}\label{PEt_cont_time}
Let $u(t)$ and $u'(t)$ be solutions of equation \eqref{PE1} with initial data $u(0)=u_0$ and $u'(0)=u'_0$, respectively. Suppose that $\|u_0\|_H, \|u'_0\|_H \le \bar R$ for some $\bar R>0$. Then, under the assumptions of Theorem~\ref{PE-C}, we have that
$
\|\mathcal{D}(u(t))-\mathcal{D}(u'(t))\|^*_{L} \le C'(\bar R)\, e^{-\gamma t}$ for all $t \ge 0,
$
for some constant $C'(\bar R)>0$.
\end{corollary}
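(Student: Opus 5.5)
The plan mirrors the proof of Corollary~\ref{t_cont_time}, with Theorem~\ref{PE-C} (resting on Theorem~\ref{Inf-mixing} and Theorem~\ref{Inf_LThm}) replacing Theorem~\ref{App-C}.

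\emph{Step 1 (reduction to $X$).} Because $X$ is absorbing, there is $N=N(\bar R)$ such that $u(N),u'(N)\in X$ for every $\omega$. Shifting time by $N$, we may take the initial data to be $\pi_0(\{\eta_k\})$-measurable random variables in $X$. The price is a factor $e^{\gamma N}$ in the constant, which is why $C'$ depends on $\bar R$.

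\emph{Step 2 (lifted system).} Set $w_k=(u(k),\nu_k)$ and $w'_k=(u'(k),\nu_k)$. These solve \eqref{PEC2} with the same driving process $\eta_k=(\nu_k,\eta^0_k)$. Apply Lemma~\ref{l_lift} with $\tau=0$ to obtain solutions $U_k,U'_k$ of the lifted Markov system \eqref{RDSL}. Theorem~\ref{Inf_LThm}, i.e.\ the analogue of \eqref{evL}, then gives
$\|\DD(U_l)-\DD(U'_l)\|_L^*\le Ce^{-\gamma l}$ for $l\in\Z_+$, with the dual-Lipschitz distance bounded by $2$ at $l=0$.

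\emph{Step 3 (interpolation to real times).} Fix $t$ and let $k$ be the smallest integer with $t\in I_k$. The key point is that $u(t)=S^{t-k+1}(u(k-1),\eta^H|_{[k-1,t]})$ is a Lipschitz function of $(w_{k-1},\nu_k,\eta^0_k)$, uniformly in $t\in I_k$. This should follow from the $C^2$-smoothness of the time-$s$ maps for $s\in[0,1]$ \cite[Theorem 3]{B22}, together with the compactness of $X\times\hat\KK$; one may alternatively work with the curve $u|_{I_k}$ in $C(I,H)$.

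By \hyperlink{SF}{(SF)}, the law of $(\nu_k,\eta^0_k)$ given the past is Lipschitz in $\eeta_{k-1}$. Repeating the argument of item 2) of Lemma~\ref{l_Lip} therefore shows that $\DD(u(t))$ is a Lipschitz function of $\DD(U_{k-1})$ in dual-Lipschitz norms, with a constant independent of $k$ and $t$. Combining this with Step 2 for $l=k-1$ gives the claim. Any uncertainty in the constant or in $e^{-\gamma}$ is absorbed into $C'(\bar R)$.

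\emph{Main obstacle.} The delicate step is the uniform Lipschitz dependence of $u(t)$ on the data in the infinite-dimensional space $H=V^m$, uniformly in $t\in[0,1]$. I would first try to obtain it from the smoothness of the solution map $H\times\HH^1\to C(I,H)$ given in \cite{B22}, restricted to the compact set where all the data live.
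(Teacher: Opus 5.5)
Your proposal is correct and essentially reproduces the paper's proof. The paper says only that one applies the argument of Corollary~\ref{t_cont_time}, which is exactly your sequence of steps: a time shift into the absorbing set $X$, the lift via Lemma~\ref{l_lift}, the estimate \eqref{evL} (here supplied by Theorem~\ref{Inf_LThm}), and the Lipschitz dependence of $\DD(u(t))$, $t\in I_k$, on $\DD(U_{k-1})$, obtained from (SF) as in item 2) of Lemma~\ref{l_Lip}. Two points the paper leaves implicit are made explicit in your version: you use an integer absorption time $N(\bar R)$, and you address the uniform Lipschitz dependence of $u|_{I_k}$ on $(w_{k-1},\nu_k,\eta^0_k)$ in $H=V^m$.
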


\par
\noindent \footnotesize
\addcontentsline{toc}{section}{Bibliography}
\newcommand{\etalchar}[1]{$^{#1}$}
\def\cprime{$'$} \def\cprime{$'$}
  \def\polhk#1{\setbox0=\hbox{#1}{\ooalign{\hidewidth
  \lower1.5ex\hbox{`}\hidewidth\crcr\unhbox0}}}
  \def\polhk#1{\setbox0=\hbox{#1}{\ooalign{\hidewidth
  \lower1.5ex\hbox{`}\hidewidth\crcr\unhbox0}}}
  \def\polhk#1{\setbox0=\hbox{#1}{\ooalign{\hidewidth
  \lower1.5ex\hbox{`}\hidewidth\crcr\unhbox0}}} \def\cprime{$'$}
  \def\polhk#1{\setbox0=\hbox{#1}{\ooalign{\hidewidth
  \lower1.5ex\hbox{`}\hidewidth\crcr\unhbox0}}} \def\cprime{$'$}
  \def\cprime{$'$} \def\cprime{$'$} \def\cprime{$'$}
\providecommand{\bysame}{\leavevmode\hbox to3em{\hrulefill}\thinspace}
\providecommand{\MR}{\relax\ifhmode\unskip\space\fi MR }
\providecommand{\MRhref}[2]{%
  \href{http://www.ams.org/mathscinet-getitem?mr=#1}{#2}
}
\providecommand{\href}[2]{#2}

\end{document}